\tikzset{mycolor/.style = {dashed,rounded corners,line width=1bp,color=#1}}%
\tikzset{myfillcolor/.style = {draw,fill=#1}}%
\NewDocumentCommand{\highlight}{O{blue!40} m m}{%
	\draw[mycolor=#1] (#2.north west)rectangle (#3.south east);
}
\newcommand{\rmnum}[1]{\romannumeral #1}
\def\cov{{\mbox{cov}}}
\def\var{{\mbox{var}}}
\newenvironment{myassumption}[1]
{\assumption}
{\endassumption}
\def\tr{{\mbox{tr}}}
\newtheorem{thm}{Theorem}[subsection]
\newtheorem{corollary}{Corollary}[subsection]
\newtheorem{example}{Example}[section]
\newtheorem{remark}{Remark}[subsection]
\newtheorem{lemma}{Lemma}
\newtheorem{proposition}{Proposition}[subsection]
\begin{document}

\begin{frontmatter}
	\title{Distance-based and RKHS-based Dependence Metrics in High Dimension}
	\runtitle{Dependence Metrics in High Dimension}
	
	\begin{aug}
		\author{\fnms{Changbo} \snm{Zhu}\corref{}\thanksref{m1}
			\ead[label=e1]{changbo2@illinois.edu}}
		\author{\fnms{Shun} \snm{Yao}\thanksref{m2}
			\ead[label=e2]{shunyao2@illinois.edu}}
		\author{\fnms{Xianyang} \snm{Zhang}\thanksref{m3}
			\ead[label=e3]{zhangxiany@stat.tamu.edu}}
		
		\and
		\author{\fnms{Xiaofeng} \snm{Shao}\thanksref{m1,t2}
		\ead[label=e4]{xshao@illinois.edu}}%
		
		\thankstext{t2}{Address correspondence to Xiaofeng Shao (xshao@illinois.edu), Professor, Department of Statistics,  University of Illinois at Urbana-Champaign. Changbo Zhu (changbo2@illinois.edu) is a Ph.D. student in Department of Statistics, University of Illinois at Urbana-Champaign. Shun Yao (shunyao2@illinois.edu) is currently Quantitative Analyst at Goldman Sachs, New York City; Xianyang Zhang (zhangxiany@stat.tamu.edu) is Assistant Professor of Statistics at Texas A\&M University.}
		
		\runauthor{C. Zhu, S. Yao, X. Zhang and X. Shao}
		
		\affiliation{University of Illinois at Urbana-Champaign\thanksmark{m1}, Goldman Sachs at New York City\thanksmark{m2} and Texas A\&M University\thanksmark{m3}}
		
%
%
%
	\end{aug}
	
	\begin{abstract}
		In this paper, we study distance covariance, Hilbert-Schmidt covariance (aka Hilbert-Schmidt independence criterion [\cite{gretton2007}]) and related independence tests under the high dimensional scenario. We show that the sample distance/Hilbert-Schmidt covariance between two random vectors can be approximated by the sum of squared componentwise sample cross-covariances up to an asymptotically constant factor, which indicates that the distance/Hilbert-Schmidt covariance based test can only capture  linear dependence in high dimension. As a consequence,  the distance correlation based $t$ test developed by \cite{szekely2013} for independence is shown to  have trivial limiting  power when the two random vectors are nonlinearly dependent but component-wisely uncorrelated. This new and surprising phenomenon, which seems to be discovered for the first time,  is  further confirmed in our simulation study. As a remedy, we propose tests based on an aggregation of marginal sample distance/Hilbert-Schmidt covariances and show their superior power behavior against their joint counterparts in simulations. We further extend the distance correlation based $t$ test to those based on Hilbert-Schmidt covariance and marginal distance/Hilbert-Schmidt covariance. A novel unified approach is developed to analyze the studentized sample distance/Hilbert-Schmidt covariance as well as the studentized sample marginal  distance covariance under both null and alternative hypothesis. Our theoretical and simulation results shed light on the limitation of distance/Hilbert-Schmidt covariance when used jointly in the high dimensional setting and suggest the aggregation of marginal distance/Hilbert-Schmidt covariance as a useful alternative.
	\end{abstract}
	
	\begin{keyword}[class=MSC]
		\kwd[Primary ]{62G10}
		\kwd{60K35}
		\kwd[; secondary ]{62G20}
	\end{keyword}
	
	\begin{keyword}
		\kwd{Distance covariance, High dimensionality, Hilbert-Schmidt independence criterion, Independence test, $\mathcal U$-statistics.}
	\end{keyword}

\end{frontmatter}

\section{Introduction}
Testing for independence between two random vectors $X\in \mathbb{R}^p$ and $Y\in \mathbb{R}^q$ is a fundamental problem in statistics. There is a huge literature in the low dimensional context. Here we  mention  rank correlation coefficients based tests and  nonparametric Cram\'er-von Mises type statistics  in \cite{hoeffding1948non}, \cite{blum1961}, \cite{de1980cramer};  tests based on signs or empirical characteristic functions, see   \cite{sinha1977}, \cite{deheuvels1981}, \cite{csorgHo1985}, \cite{hettmansperger1994}, \cite{gieser1997},  \cite{taskinen2003}, \cite{stepanova2003} among others; tests based on recently developed nonlinear dependence metrics that target at non-linear and non-monotone dependence include distance covariance [\cite{szekely2007}], Hilbert-Schmidt independence criterion (HSIC) [\cite{gretton2007}] (aka Hilbert-Schmidt covariance in this work) and sign covariance [\cite{bergsma2014}].

In the high dimensional setting, the literature is scarce. \cite{szekely2013} extended  the distance correlation proposed in \cite{szekely2007} to the problem of testing independence of two random vectors under the setting that the dimensions $p$ and $q$ grow while sample size $n$ is fixed. This setting is known as high dimension, low sample size (HDLSS) in the literature and has been adopted in \cite{hall2005}, \cite{ahn2007}, \cite{jung2009}, and \cite{wei2016} etc. A closely related asymptotic framework is the high dimension medium sample size (HDMSS) [\cite{aoshima2018survey}], where $n \wedge p \wedge q \rightarrow \infty$ with $p,q$ growing more rapidly. Among the recent work that is related to independence testing in the high dimensional setting, \cite{pan2014} proposed tests of independence among a large number of high dimensional random vectors using insights from random matrix theory; \cite{yang2015} proposed a new statistic based on the sum of regularized sample canonical correlation coefficients of $X$ and $Y$, which is limited to testing for uncorrelatedness due to the use of canonical correlation. \cite{leung2018testing} proposed to test for mutual independence of high dimensional vectors using sum of pairwise rank correlations and sign covariances;
\cite{yao2017testing} addressed the mutual independence testing problem in the high dimensional context by using sum  of pairwise squared sample distance covariances;
\cite{zhang2018conditional} proposed a $L^2$ type test for conditional mean/quantile dependence of a univariate response variable given a high dimensional covariate vector based on martingale difference divergence [\cite{shao2014martingale}], which is an extension of distance covariance to quantify conditional mean dependence.

Distance covariance/correlation was first introduced in \cite{szekely2007} and has received much attention since then. Owing to its notable ability to quantify any types of dependence including non-monotone, non-linear dependence and also the flexibility to be applicable to two random vectors in arbitrary, not necessarily equal dimensions, a lot of research work has been done to extend and apply distance covariance into many modern statistical problems; see e.g.  \cite{kong2012}, \cite{li2012}, \cite{zhou2012}, \cite{lyons2013}, \cite{szekely2014}, \cite{dueck2014}, \cite{shao2014martingale}, \cite{park2015}, \cite{matteson2016}, \cite{zhang2018conditional} , \cite{edelmann2017}, \cite{yao2017testing} among others.
In this paper, we shall revisit the test proposed by \cite{szekely2013}, which seems to be the only test in the high dimensional setting that captures nonlinear and nonmonotonic dependence. Unlike the positive finding reported in \cite{szekely2013}, we obtained some negative and shocking results that show the limitation of distance covariance/correlation in the high dimensional context.



Specifically, we show that for two random vectors $X =(x_1,...,x_p)$ $ \in \mathbb R^p $ and $Y=(y_1,...,y_q) \in \mathbb R^q$ with finite component-wise second moments,  as $p, q \rightarrow \infty$ and $n$ can either be fixed or grows to infinity at a slower rate,
\begin{align}
\label{intro:dcov}
dCov^2_n(\mathbf X, \mathbf Y)  \approx  \frac{1}{\tau} \sum_{i=1}^p \sum_{j=1}^q cov_n^2(\mathcal X_i, \mathcal Y_j),
\end{align}
where $X_{k} \overset{d}{=} X$ and $Y_{k} \overset{d}{=} Y $ are independent samples, $\mathcal{X}_{i}$ and $\mathcal{Y}_{j}$ are the component-wise samples, $\mathbf{X} = ( X_{1},X_{2}, \cdots, X_{n} )^{T} = (\mathcal X_{1}, \mathcal X_{2}, \cdots, \mathcal X_{p} )$ and $\mathbf{Y} = ( Y_{1},Y_{2}, \cdots, Y_{n} )^{T} = (\mathcal Y_{1}, \mathcal Y_{2}, \cdots, \mathcal Y_{q} )$ denote the sample matrices, $dCov^2_n(\mathbf X, \mathbf Y)$ is the unbiased sample distance covariance, $\tau$ is a constant quantity depending on the marginal distributions of $X$ and $Y$ as well as $p$ and $q$, $cov_n^2(\mathcal X_i, \mathcal Y_j)$ is an unbiased sample estimate of $cov^2(x_i, y_j)$ to be defined later.  To the best of our knowledge, this is the first work in the literature uncovering the connection between sample distance covariance and sample covariance, the latter of which can only measure the linear dependence between two random variables.
This approximation suggests that the distance covariance can only measure linear dependence in the high dimensional setting although it is well-known to be capable of capturing non-linear dependence in the fixed dimensional case.

\cite{gretton2007} proposed Hilbert-Schmidt independence criterion (aka Hilbert-Schmidt covariance in this paper), which can be seen as a generalization of distance covariance by kernelizing the $L^2$ distance as shown by \cite{sejdinovic2013}. Despite the kernelization process, we show that the Hilbert-Schmidt covariance ($hCov$) enjoys similar approximation property under high dimension low/medium sample size setting, i.e.
\begin{align}
\label{intro:hcov}
 hCov^2_{n}(\mathbf X, \mathbf Y) \approx A_pB_q \times  \frac{1}{\tau^2} \sum\limits_{i=1}^{p} \sum\limits_{j=1}^{q} cov_{n}^2 (\mathcal X_{i}, \mathcal Y_{j}),
\end{align}
where $ hCov^2_{n}(\mathbf X, \mathbf Y) $ is the unbiased sample Hilbert-Schmidt covariance, $A_p$ and $B_q$ both converge in probability to constants  that depend on the pre-chosen kernels. This aproximation also suggests that when the dimension is high, the Hilbert-Schmidt covariance ($hCov$) applied to the whole components of the vectors also exhibits the loss of power when $X$ and $Y$ are non-linearly dependent, but component-wisely uncorrelated or weakly correlated.

As a natural  remedy, we  propose a distance covariance based marginal test statistic, i.e.,
\[ mdCov_n^{2}(\mathbf X, \mathbf Y)  = \sqrt{\binom{n}{2}}\sum_{i=1}^p \sum_{j=1}^q dCov_n^2(\mathcal X_i, \mathcal Y_j). \]
This test statistic is an aggregate of the componentwise sample distance covariances and  captures the component by component nonlinear dependence. Similarly, the marginal Hilbert-Schmidt covariance ($mhCov$) is defined as
\[ mhCov_n^2 (\mathbf X, \mathbf Y)  = \sqrt{\binom{n}{2}}\sum_{i=1}^p \sum_{j=1}^q hCov_n^2( \mathcal X_i,\mathcal Y_j). \]

The distance covariance, Hilbert-Schmidt covariance, marginal distance covariance and marginal Hilbert-Schmidt covariance based tests can be carried out by standard permutation procedures. The superiority of $mdCov$ and $mhCov$ based tests over its joint counterparts in power is demonstrated in the simulation studies.
On the other hand, \cite{szekely2013}  discussed the distance correlation ($dCor$) based $t$-test under HDLSS and derived the limiting null distribution of the test statistic under suitable assumptions. We consider the same $t$-test statistic and further extends to Hilbert-Schmidt covariance ($hCov$), marginal distance covariance ($mdCov$) and marginal Hilbert-Schmidt covariance ($mhCov$). To derive the asymptotic distribution of studentized version of  $dCov, hCov, mdCov $ and $ mhCov $ under both the null  of independence (for HDLSS and HDMSS setting) and some specific alternative classes (for HDLSS setting), we develop a novel unified approach. In particular, we define a unified quantity ($uCov$) based on the bivariate kernel $k$ and show that under HDLSS setting, properly scaled $dCov_n^{2}$, $ hCov_n^{2}$ and $mdCov_n^{2}$ are all asymptotically equal to $uCov_n^2$ up to different choices of kernels, i.e.
\begin{align}
\label{eq:approx}
\begin{array}{ll}
\left.
\begin{array}{l}
dCov_n^{2}(\mathbf X, \mathbf Y) \approx a \times  uCov_n^2(\mathbf X, \mathbf Y)    \\
hCov_n^{2}(\mathbf X, \mathbf Y)  \approx A_pB_q \times  uCov_n^2(\mathbf X, \mathbf Y)
\end{array}  \right\rbrace
 & \text{when } k(x,y)= |x-y|^2, \\
\\
\left.
\begin{array}{l}
mdCov_n^{2}(\mathbf X, \mathbf Y)  = b \times  uCov_n^2(\mathbf X, \mathbf Y)
\end{array} \;\;\;\; \right\rbrace & \text{when } k(x,y)= |x-y| ,
\end{array}
\end{align}
where $a,b$ are constants and $A_p,B_p$ both converge in probability to constants. Next, we show that
\begin{align*}
 \left\lbrace
\begin{array}{ll}
uCov_n^2(\mathbf X, \mathbf Y) \overset{d}{\rightarrow} \frac{2}{n(n-3)}\mathbf{c}^T \mathbf{M} \mathbf{d}, & \text{under HDLSS}, \\
C_{n,p,q} uCov_n^2(\mathbf X, \mathbf Y) \overset{d}{\rightarrow} N(0,1), & \text{under HDMSS} ,
\end{array} \right.
\end{align*}
where $\mathbf{c}, \mathbf{d}$ are jointly Gaussian, $\mathbf{M}$ is a projection matrix and $C_{n,p,q}$ is a normalizing constant. Thus, we can easily apply the above results to $dCov, hCov$ and $mdCov$-based $t$-test statistics using \eqref{eq:approx}. The unified approach still works for $mhCov$-based $t$-test if we consider the bandwidth parameters appeared in the kernel distance to be fixed constants. However, we encounter technical difficulties if the bandwidth parameters along each dimension depends on the whole component-wise samples, since this  makes the pair-wise sample distance correlated with each other and complicates the asymptotic analysis.

We obtain the same limiting null distribution as \cite{szekely2013} and further show that this test statistic has a trivial power against the alternative where $X$ and $Y$ are non-linearly dependent, but component-wisely uncorrelated. This clearly demonstrates that the distance covariance/correlation based joint independence test (i.e., treating all components of a vector as a whole jointly) fails to capture the non-linear dependence in high dimension. This phenomenon is new and was not reported in \cite{szekely2013}. It shows that there might be some intrinsic difficulties for distance covariance to capture the non-linear dependence when the dimension is high and provide a cautionary note on the use of distance covariance/correlation directly to the whole components of high dimensional data. Besides, we have the following additional contributions relative to \cite{szekely2013}: (i) we relax the component-wise i.i.d. assumption used for asymptotic analysis; (ii) the limiting distributions are derived under both the null and certain classes of alternative hypothesis for the HDLSS framework; (iii) our unified approach holds for any bivariate kernel that has continuous second order derivative in a neighborhood containing 1; (iv) the  limiting null distribution is also derived under the HDMSS setting.


\subsection{Notations}
In this paper, random data samples are denoted as, for each $i = 1, 2, \cdots, n$, $X_{i} \overset{d}{=} X= (x_{1}, \cdots, x_{p})^T \in \mathbb{R}^{p}$, $Y_{i} \overset{d}{=} Y= (y_{1}, \cdots, y_{q})^T \in \mathbb{R}^{q}$. Next, let $\mathbf{X} = ( X_{1},X_{2}, \cdots, X_{n} )^{T}$ and $\mathbf{Y} = ( Y_{1},Y_{2}, \cdots, Y_{n} )^{T}$ denote the random sample matrices. In addition, the random component-wise samples are denoted as $\mathcal{X}_{1}, \cdots, \mathcal{X}_{p}$ and $\mathcal{Y}_{1}, \cdots, \mathcal{Y}_{q}$, which are illustrated in the following table,
\begin{equation*}
\begin{tikzpicture}[baseline=-\the\dimexpr\fontdimen22\textfont2\relax ]
\matrix (m)[matrix of math nodes]
{
	&  & \mathcal{X}_{1} & \mathcal{X}_{2} & \cdots & \mathcal{X}_{p} & &\\
	&  & \color{blue}{\Downarrow} &  &  & & & \\
	X_{1}^{T} & \color{red}{\Rightarrow} & x_{11} & x_{12} & \cdots & x_{1p} & &\\
	X_{2}^{T} &  & x_{21} & x_{22} & \cdots & x_{2p} & &\\
	\vdots &  & \vdots & \vdots &  & \vdots & \color{green}{\Leftarrow} & \mathbf{X}\\
	X_{n}^{T} & & x_{n1} & x_{n2} & \cdots & x_{np} & &\\
};
\begin{pgfonlayer}{myback}
\highlight[red]{m-3-3}{m-3-6}
\highlight[blue]{m-3-3}{m-6-3}
\highlight[green]{m-3-3}{m-6-6}
\end{pgfonlayer}
\end{tikzpicture}
\qquad
\begin{tikzpicture}[baseline=-\the\dimexpr\fontdimen22\textfont2\relax ]
\matrix (m)[matrix of math nodes]
{
	& & \mathcal{Y}_{1} & \mathcal{Y}_{2} & \cdots & \mathcal{Y}_{q} & & \\
	& & &  &  & \color{blue}{\Downarrow} &  & \\
	&  & y_{11} & y_{12} & \cdots & y_{1q} &  & Y_{1}^{T} \\
	\mathbf{Y}& \color{green}{\Rightarrow} & y_{21} & y_{22} & \cdots & y_{2q} &  &  Y_{2}^{T} \\
	& & \vdots & \vdots &  & \vdots &  & \vdots \\
	& & y_{n1} & y_{n2} & \cdots & y_{nq} & \color{red}{\Leftarrow} & Y_{n}^{T} \\
};
\begin{pgfonlayer}{myback}
\highlight[red]{m-6-3}{m-6-6}
\highlight[blue]{m-3-6}{m-6-6}
\highlight[green]{m-3-3}{m-6-6}
\end{pgfonlayer}
\end{tikzpicture}
\end{equation*}
Furthermore, matrices are denoted by  upper case boldface letters (e.g. $\mathbf{A}$, $\mathbf{B}$). For any matrix $\mathbf A = (a_{st}) \in \mathbb{R}^{n \times n}$, we use $\widetilde{ \mathbf A} = (\tilde{a}_{st}) \in \mathbb{R}^{n \times n}$ to denote the $\mathcal U$-centered version of $\mathbf A$, i.e.,
\begin{align*}
\tilde{a}_{st} = \left\lbrace  \begin{array}{ll}
a_{st} -\frac{1}{n-2} \sum_{v=1}^n a_{sv} - \frac{1}{n-2} \sum_{u=1}^n a_{ut}  + \frac{1}{(n-1)(n-2)} \sum_{u,v =1}^{n} a_{uv},&  s \ne t \\
0 , & s=t
\end{array} \right.
\end{align*}
Following \cite{szekely2014}, the inner product between two $\mathcal U$-centered matrices $\widetilde{\mathbf{A}}= (\tilde{a}_{st}) \in \mathbb{R}^{n \times n} $ and $ \widetilde{\mathbf{B}}= (\tilde{b}_{st}) \in \mathbb{R}^{n \times n} $ is defined as
\begin{align*}
(\widetilde{\mathbf{A}} \cdot \widetilde{\mathbf{B}} ) := \frac{1}{n(n-3)} \sum\limits_{s \neq t} \tilde{a}_{st} \tilde{b}_{st}.
\end{align*}
Next, we use $\mathbf{1}_{n}$ to denote the $n$ dimensional column vector whose entries are all equal to 1. Similarly, we use $\mathbf{0}_{n}$ to denote the $n$ dimensional column vector whose entries are all equal to 0. Finally, we use $ | \cdot | $ to denote the $L^{2}$ norm of a vector, $(X',Y')$ and $(X'',Y'')$ to be independent copies of $(X,Y)$ and $X \perp Y$ to indicate that $X$ and $Y$ are independent.

We  utilize the order in probability notations such as stochastic boundedness $O_{p}$ (big O in probability), convergence in probability $o_{p}$ (small o in probability) and equivalent order $\asymp_p$, which is defined as follows: for a sequence of random variables $\{Z_s\}_{s \in \mathbb Z}$ and a sequence of numbers $\{a_s\}_{s \in \mathbb Z}$, $Z_s \asymp_p a_s$  if and only if $Z_s/a_s = O_p(1) $ and $a_s/Z_s = O_p(1) $ as $s \rightarrow \infty$. For more details about these notations, please see \cite{dasgupta2008asymptotic}.

\section{High Dimension Low Sample Size}
The analyses in this section are conducted under the HDLSS setting, i.e., the sample size $n$ is fixed and the dimensions $p \wedge q \rightarrow \infty$.

\subsection{Distance Covariance and Variants}
\label{statistics}
In this section, we introduce the following test statistics based on distance covariance ($dCov$), marginal distance covariance ($mdCov$), Hilbert-Schmidt covariance ($hCov$) and marginal Hilbert-Schmidt covariance ($mhCov$). In addition, their asymptotic behaviors under the HDLSS setting are derived. The following moment conditions will be used throughout the paper.


\begin{myassumption}{D1}\label{D1}
For any $p,q$, the variance and the second moment of any coordinate of $X = (x_{1}, x_{2}, \cdots, x_{p})^T$ and $Y = (y_{1}, y_{2}, \cdots, y_{q})^T$ is uniformly bounded below and above, i.e.,
	\begin{align*}
	 &0 < a \leq \inf_{i} var(x_i) \leq \sup_i E (x_i^2 ) \leq b < \infty, \\
	 & 0 < a' \leq \inf_{j} var(y_j) \leq \sup_j E (y_j^2 ) \leq b' < \infty ,
	\end{align*}
	for some constants $a,b,a',b'$.
\end{myassumption}

%

Next, denote $\tau_X^2 =E|X-X'|^2 $, $\tau_Y^2 =E|Y-Y'|^2 $ and $\tau^2 := \tau_X^2\tau_Y^2= E|X-X'|^2 E|Y-Y'|^2$. Notice that under assumption \ref{D1}, it can be easily seen that
\begin{align*}
\tau_X \asymp \sqrt{p}, \tau_{Y} \asymp \sqrt{q} \text{ and } \tau \asymp \sqrt{pq}.
\end{align*}

The statistics we study in this work use the pair-wise $L^2$ distance between data points. The following proposition presents an expansion formula on the normalized $L^2$ distance when the dimension is high, which plays a key role in our theoretical analysis.
\begin{proposition}
	\label{prop:taylor}
	Under Assumption \ref{D1}, we have
	\begin{align*}
	 \frac{|X-X'|}{\tau_X} & = 1 + \frac{1}{2} L_X(X,X') + R_X(X,X'),
	\end{align*}
	where
	\begin{align*}
	 L_X(X,X') : = \frac{|X-X'|^2 - \tau_{X}^2 }{\tau_{X}^2},
	\end{align*}
	and $R_X(X,X') $ is the remainder term. If we further assume that as $p \wedge q \rightarrow \infty$, $L_X(X,X') =o_p(1)$, then $ R_X(X,X')  = O_p ( L_X(X,X')^2)$. Similar result holds for $Y$.
\end{proposition}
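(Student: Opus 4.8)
The plan is to read the left-hand side as a square root and reduce the claim to a second-order Taylor expansion of the elementary function $f(u) = \sqrt{1+u}$ about $u=0$. First I would observe that, directly from the definition of $L_X$,
\[
\frac{|X-X'|^2}{\tau_X^2} = 1 + L_X(X,X'),
\]
so that $|X-X'|/\tau_X = \sqrt{1 + L_X(X,X')} = f(L_X(X,X'))$. Since $|X-X'|^2 \ge 0$ we always have $L_X(X,X') \ge -1$, hence the square root is well defined and $f$ is applied to an admissible argument. This converts the proposition into a statement about the quadratic remainder of $f$.

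Next I would apply Taylor's theorem with Lagrange remainder to $f$ at $0$. With $f(0)=1$, $f'(0)=\tfrac12$ and $f''(u) = -\tfrac14 (1+u)^{-3/2}$, there exists a (random) point $\xi$ lying between $0$ and $L_X(X,X')$ such that
\[
f(L_X(X,X')) = 1 + \tfrac12 L_X(X,X') - \tfrac18 (1+\xi)^{-3/2} L_X(X,X')^2 .
\]
Matching this with the decomposition in the statement identifies the remainder as $R_X(X,X') = -\tfrac18 (1+\xi)^{-3/2} L_X(X,X')^2$, so the entire task reduces to showing that the prefactor $(1+\xi)^{-3/2}$ is $O_p(1)$.

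This last step is where the hypothesis $L_X(X,X') = o_p(1)$ enters, and it is the only genuinely probabilistic point in the argument. Because $\xi$ lies between $0$ and $L_X(X,X')$, we have $|\xi| \le |L_X(X,X')| = o_p(1)$, whence $1 + \xi \overset{p}{\rightarrow} 1$. By the continuous mapping theorem $(1+\xi)^{-3/2} \overset{p}{\rightarrow} 1$, and in particular $(1+\xi)^{-3/2} = O_p(1)$; equivalently, for any $\varepsilon>0$ the event that $1+\xi$ exceeds a fixed positive threshold has probability at least $1-\varepsilon$, keeping $\xi$ bounded away from the singularity at $-1$. Combining this with the expression for $R_X$ gives $R_X(X,X') = O_p(1)\cdot L_X(X,X')^2 = O_p(L_X(X,X')^2)$, as claimed. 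The statement for $Y$ follows verbatim with $\tau_Y$, $L_Y$ and $R_Y$ replacing their $X$-counterparts.

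The main (and rather mild) obstacle is precisely the control of $(1+\xi)^{-3/2}$: without the assumption $L_X(X,X') = o_p(1)$ the intermediate point $\xi$ could approach $-1$, where $f''$ blows up, and the Lagrange remainder would no longer be of order $L_X^2$. To avoid invoking an unspecified mean-value point, I would alternatively use the integral form of the remainder, $R_X(X,X') = L_X(X,X')^2 \int_0^1 (1-t)\, f''\!\left(t\, L_X(X,X')\right) dt$, and bound the integral on the event $\{|L_X(X,X')| \le 1/2\}$, which has probability tending to one; on this event $f''$ is uniformly bounded over the relevant range, making the $O_p(1)$ prefactor explicit.
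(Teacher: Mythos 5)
Your proposal is correct and follows essentially the same route as the paper: the paper also writes $|X-X'|/\tau_X = \sqrt{1+L_X(X,X')}$, Taylor-expands $\sqrt{1+u}$ at $0$, and controls the second-order remainder (in integral form, $R_X = L_X^2\int_0^1\int_0^1 v\,f^{(2)}(uv\,L_X)\,du\,dv$ with $f^{(2)}(t)=-\tfrac14(1+t)^{-3/2}$) via the continuous mapping theorem using $L_X=o_p(1)$. Your integral-form alternative at the end is precisely the paper's argument, and it is the cleaner choice since it avoids any measurability quibble about the Lagrange intermediate point $\xi$.
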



In order for the approximations in equations (\ref{intro:dcov}) and (\ref{intro:hcov}) to work well, it is required that $ L_{X}(X_{s}, X_{t}) $ and $ L_{Y}(Y_{s}, Y_{t}) $ should decay relatively fast as $p \wedge q \rightarrow \infty$. The following assumption specifies the order of $ L_{X}(X_{s}, X_{t}) $ and $ L_{Y}(Y_{s}, Y_{t}) $.
\begin{myassumption}{D2}\label{D2}
	$
	L_{X}(X, X') = O_p (a_{p}) \text{ and } L_{Y}(Y, Y') = O_p (b_{q}),$
	where $a_{p}, b_{q}$ are sequences of numbers such that
	\begin{align*}
	\begin{array}{c}
	a_{p} = o(1), b_{q} = o(1), \\
	\tau_{X}^2 a_p^3 = o(1), \tau_{Y}^2 b_{q}^3 = o(1), \tau a_{p}^2 b_{q} = o(1), \tau a_{p} b_{q}^{2} = o(1).
	\end{array}
	\end{align*}
\end{myassumption}

\begin{remark}
	\label{remark:mainthm}
	A sufficient condition for $ L_X(X,X') =o_p(1) $ is that $ E[L_X(X,X')^2]=o(1)$. Let $\bm{\Sigma}_X=\cov(X)$. By a straightforward calculation, we obtain
	$|X-X'|^2=\sum_{j=1}^{p}(x_j-x_j')^2$, $E |X-X'|^2 =2\sum_{j=1}^{p} \var(x_j) = 2\tr(\bm{\Sigma}_X) $, and
	\begin{align*}
	E[L_{X}(X,X')^2]  = \frac{\sum_{j,j'=1}^{p} [ \cov (x_{j}^2, x_{j'}^{2}) + 2 \cov^2(x_{j}, x_{j'}) ] }{ 2 \text{tr}^2(\bm{\Sigma}_{X}) }.
	\end{align*}
	Therefore, $E[L_X(X,X')^2]=o(1)$ holds if the component-wise dependence within $X$ is not too strong. To illustrate this point, we consider the factor model,
		$$
		X_{p \times 1} = \mathbf{A}_{p \times s_{1}} U_{s_{1} \times 1} + \Phi_{p \times 1}, Y_{q \times 1} = \mathbf{B}_{q \times s_{2}} V_{s_{2} \times 1} + \Psi_{q \times 1},
		$$
		where $\mathbf{A}, \mathbf{B}$ are constant matrices such that $\| \mathbf{A} \|_{F}^{2} = O(p^{1/2})$ and $\| \mathbf{B} \|_{F}^{2} = O(q^{1/2}) $, where $\| \cdot \|_{F}$ is the Frobenius norm. In addition, the components in $U = (u_{1}, \cdots, u_{s_{1}})^{T}$, $V = (v_{1}, \cdots, v_{s_{2}})^{T}$ are independent, $\Phi = ( \phi_{1}, \cdots, \phi_{p} )^T$ is independent of $U$ and $\Psi = (\psi_{1}, \cdots, \psi_{q})^T$ is independent of $V$. Furthermore, the $4$th moment of each component of $ U,V, \Phi, \Psi $ are bounded, i.e.
		$$
		\max \left\lbrace  \sup\limits_{s} E[u_{s}^{4}] , \sup\limits_{t} E[v_{t}^{4}] , \sup\limits_{i} E[\phi_{i}^{4}] , \sup\limits_{j} E[ \psi_{j}^{4}]\right\rbrace< \infty.
		$$
Under Assumption \ref{D1}, the above factor model satisfies Assumption \ref{D2} with $a_{p} = 1/ \sqrt{p} \text{ and } b_q = 1/ \sqrt{q},$  see Section \ref{App:proofRemark} of Appendix for more details.
\end{remark}

\subsubsection{Distance Covariance}
\label{dCov}
Distance covariance was first introduced by \cite{szekely2007} to measure the dependence between two random vectors of arbitrary dimensions. For two random vectors $X \in \mathbb{R}^p$ and $Y\in \mathbb{R}^q$, the (squared) distance covariance is defined as
\[
dCov^2(X,Y) = \int_{\mathbb{R}^{p+q}} \frac{|\phi_{X,Y}(t,s)-\phi_X(t)\phi_Y(s)|^2}{c_pc_q|t|^{1+p} |s|^{1+q}}dtds,
\]
where $c_p=\pi^{(1+p)/2}/\Gamma((1+p)/2)$, $|\cdot| $ is the (complex) Euclidean norm defined as $|x| = \sqrt {\bar x^T x}$ for any vector $x$ in the complex vector space (
$\bar x$ denotes the conjugate of $x$), $\phi_X$ and $\phi_Y$ are the characteristic functions of $X$ and $Y$ respectively, $\phi_{X,Y}$ is the joint characteristic function. According to Theorem 7 of \cite{szekely2009},  an alternative definition of distance covariance is given by
\begin{multline}
\label{def:alt}
dCov^2(X,Y)  =  E|X-X'||Y-Y'| \\ + E|X-X'|E|Y-Y'| - 2 E|X- X'||Y-Y''|,
\end{multline}
where $(X',Y')$ and $(X'',Y'')$ are independent copies of $(X,Y)$. It has been shown that $dCov^2(X,Y)=0$ if and only if $X$ and $Y$ are independent. Therefore, it is able to measure any type of dependence including non-linear and non-monotonic dependence between $X$ and $Y$, whereas the commonly used Pearson correlation can only measure the linear dependence and the rank correlation coefficients (Kendall's $\tau$ and Spearman's $\rho$) can only capture the monotonic dependence.

Notice that in the above setting, $p,q$ are arbitrary positive integers. Therefore, distance covariance is applicable to the high dimensional setting, where we allow $p,q \rightarrow \infty.$ However, it is unclear whether this metric can still retain the power to detect the nonlinear dependence or not when the dimension is high. Distance correlation ($dCor$) is the normalized version of distance covariance, which is defined as
\begin{align*}
dCor^2(X,Y) = \left\lbrace  \begin{array}{ll}
\frac{dCov^2(X,Y)}{\sqrt{dCov^2(X,X) dCov^2(Y,Y) }}, &  dCov^2(X,X) dCov^2(Y,Y) >0, \\
0,	&  dCov^2(X,X) dCov^2(Y,Y) = 0.
\end{array} \right.
\end{align*}


Following \cite{szekely2014}, we introduce the $\mathcal U$-centering based unbiased sample distance covariance ($dCov_n^2$) as follows.
\begin{align*}
dCov_n^2(\mathbf{X},\mathbf{Y}) = (\widetilde{\mathbf{A}} \cdot \widetilde{\mathbf{B}}  ),
\end{align*}
where $\widetilde{\mathbf{A}}, \widetilde{\mathbf{B}}$ are the $\mathcal U$-centered versions of $\mathbf{A} = (a_{st})_{s,t=1}^n, \mathbf{B} = (b_{st})_{s,t=1}^n$ respectively and $a_{st}= |X_s - X_t|, b_{st}= |Y_s - Y_t|$ for $s,t=1, \cdots ,n$. Correspondingly, the sample distance correlation ($dCor^2_n$) is given as
\begin{align*}
dCor^2_n(\mathbf{X},\mathbf{Y}) = \left\lbrace  \begin{array}{ll}
\frac{dCov^2_n(\mathbf{X},\mathbf{Y})}{\sqrt{dCov^2_n(\mathbf{X},\mathbf{X}) dCov^2_n(\mathbf{Y},\mathbf{Y}) }}, &  dCov^2_n(\mathbf{X},\mathbf{X}) dCov^2_n(\mathbf{Y},\mathbf{Y}) >0, \\
0,	&  dCov^2_n(\mathbf{X},\mathbf{X}) dCov^2_n(\mathbf{Y},\mathbf{Y}) = 0.
\end{array} \right.
\end{align*}
Here we can apply the approximation in Proposition \ref{prop:taylor}, that is
\begin{align}
\label{l2:decomp}
 & \frac{a_{st}}{\tau_X} = 1 + \frac{1}{2} L_{X}(X_s, X_t ) + R_{X}(X_s, X_t ),\\
 & \frac{b_{st}}{\tau_Y} = 1 + \frac{1}{2} L_{Y}(Y_s, Y_t ) + R_{Y}(Y_s, Y_t ), \label{l2:decomp2}
\end{align}
where
\begin{align*}
L_{X}(X_s, X_t ) = \frac{|X_s - X_t|^2 -\tau_{X}^2 }{\tau_{X}^2 }, \;
L_{Y}(Y_s, Y_t ) = \frac{|Y_s - Y_t|^2 -\tau_{Y}^2 }{\tau_{Y}^2 },
\end{align*}
and $R_{X}$, $R_{Y}$ are the remainder terms from the approximation. The approximation of the pair-wise $L^2$ distance in Equations \eqref{l2:decomp} and \eqref{l2:decomp2} is our building block to decompose the unbiased sample (squared) distance covariance ($dCov_n^2$) into a leading term plus a negligible remainder term under the HDLSS setting. The following main theorem summarizes the decomposition properties of sample distance covariance ($dCov_n^2$).
\begin{thm}
	\label{thm:decomp}
	Under Assumption \ref{D1}, we can show that
	\begin{itemize}
		\item[(i)] \begin{align}
		\label{eq:decomp}
		dCov^2_n(\mathbf{X},\mathbf{Y})  =  \frac{1}{\tau} \sum_{i=1}^p \sum_{j=1}^q \cov_n^2 \left(\mathcal X_i, \mathcal Y_j \right)  + {\cal R}_n.
		\end{align}
		Here
		\[
		\cov_n^2 \left( \mathcal X_i, \mathcal Y_j \right) =  \frac{ 1 }{\binom{n}{4}}  \sum_{s< t< u < v  }  h(x_{si} , x_{ti}, x_{ui} ,  x_{vi} ; y_{sj} , y_{tj} , y_{uj} , y_{vj} ),
		\]
		and the kernel $h$ is defined as
		\begin{multline*}
		h(x_{si} , x_{ti}, x_{ui} ,  x_{vi} ;  y_{sj},  y_{tj} , y_{uj} , y_{vj})  \\ =  \frac{1}{4!} \sum_{  * }^{(s,  t , u, v)}    \frac{1}{4}  (x_{si} - x_{ti}) (y_{sj} - y_{tj})(x_{ui} - x_{vi}) (y_{uj} - y_{vj}),
		\end{multline*}
		where the summation $\sum_{  * }^{(s,  t , u, v )}$ is over all permutations of the 4-tuples of indices $(s,t,u,v)$ and ${\cal R}_n $ is the remainder term. $\cov_n^2\left(\mathcal X_i, \mathcal Y_j \right) $
		is a fourth-order U-statistic and is an unbiased estimator for the squared covariance between $x_{i}$ and $y_{j}$, i.e., $E[ \cov_n^2\left(\mathcal X_i, \mathcal Y_j \right) ] = cov^2(x_{i}, y_{j})$.
		\item[(ii)] Further suppose Assumption \ref{D2} holds. Then
		\begin{align*}
		& \frac{1}{\tau} \sum_{i=1}^p \sum_{j=1}^q \cov_n^2 \left(\mathcal X_i, \mathcal Y_j \right) = O_p( \tau a_p b_q ), \\
		& {\cal R}_n = O_p \left( \tau a_p^2 b_q + \tau a_p b_q^2 \right)        = o_p (1),
 		\end{align*}
		thus the remainder term is of smaller order compared to the leading term and therefore is asymptotically  negligible.
	\end{itemize}
\end{thm}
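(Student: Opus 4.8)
The plan is to treat part (i) as an exact algebraic decomposition and part (ii) as an order-of-magnitude analysis built on Proposition \ref{prop:taylor} and Assumption \ref{D2}. The engine throughout is a \emph{reduction principle} for the $\mathcal U$-centered inner product: since the distance matrices are symmetric with zero diagonal, I would first check directly from the definition of $\mathcal U$-centering that every $\mathcal U$-centered matrix has vanishing off-diagonal row and column sums, so that $\sum_{s\neq t}\tilde a_{st}(f_s + g_t + c)=0$ for any row effect $f_s$, column effect $g_t$ and constant $c$. Consequently $(\widetilde{\mathbf A}\cdot\widetilde{\mathbf B})$ is unchanged when either argument is modified by an additively-separable-plus-constant matrix on its off-diagonal entries.

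Next I would substitute the expansions \eqref{l2:decomp}--\eqref{l2:decomp2}. Writing $|X_s-X_t|^2=\sum_i(x_{si}^2+x_{ti}^2)-2\sum_i x_{si}x_{ti}$, the term $\tfrac12 L_X(X_s,X_t)$ splits into a separable-plus-constant piece (the $x_{si}^2,x_{ti}^2$ and $\tau_X^2$ contributions) and the interaction piece $-\tau_X^{-2}\sum_i x_{si}x_{ti}$, while the leading $1$ is also killed. By the reduction principle, inside the inner product $a_{st}$ may be replaced by $-\tau_X^{-1}P^X_{st}+\tau_X R_X(X_s,X_t)$ with $P^X_{st}=\sum_i x_{si}x_{ti}$, and likewise for $b_{st}$. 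Expanding the resulting inner product bilinearly yields the single leading term $\tau^{-1}(\widetilde{\mathbf P^X}\cdot\widetilde{\mathbf P^Y})$ plus three terms each carrying a factor $\mathbf R^X$ or $\mathbf R^Y$, which I collect into $\mathcal R_n$. Linearity of $\mathcal U$-centering and bilinearity of the inner product then give $\tau^{-1}(\widetilde{\mathbf P^X}\cdot\widetilde{\mathbf P^Y})=\tau^{-1}\sum_{i,j}(\widetilde{\mathbf p^{X,i}}\cdot\widetilde{\mathbf p^{Y,j}})$ with $p^{X,i}_{st}=x_{si}x_{ti}$, so part (i) reduces to identifying each per-component inner product with $\cov_n^2(\mathcal X_i,\mathcal Y_j)$.

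That identification is where I expect the real work. I would invoke the representation of a $\mathcal U$-centered inner product as a fourth-order $\mathcal U$-statistic (in the style of \cite{szekely2014}), which rewrites $(\widetilde{\mathbf p^{X,i}}\cdot\widetilde{\mathbf p^{Y,j}})$ as the $\binom{n}{4}$-average of the symmetrized kernel $h$ stated in the theorem; the unbiasedness $E[\cov_n^2(\mathcal X_i,\mathcal Y_j)]=\cov^2(x_i,y_j)$ then follows from a short moment computation, since for distinct, independent samples $E[(x_{si}-x_{ti})(y_{sj}-y_{tj})]=2\cov(x_i,y_j)$ and the two disjoint index-pairs factorize to give $4\cov^2(x_i,y_j)$. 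The combinatorial bookkeeping of this $\mathcal U$-statistic identity is the main obstacle; everything else is linear algebra.

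For part (ii), I would re-express the leading and remainder terms through the $\mathcal U$-centered matrices of $L_X, L_Y$ and the remainders. Again by the reduction principle, $\mathbf P^X$ is interchangeable with $-\tfrac{\tau_X^2}{2}\mathbf L^X$ inside the inner product, so the leading term equals $\tfrac{\tau}{4}(\widetilde{\mathbf L^X}\cdot\widetilde{\mathbf L^Y})$. Because $n$ is fixed, each $\mathcal U$-centered entry is a finite linear combination of its generating entries and hence inherits their order; thus Assumption \ref{D2} gives $\tilde L^X_{st}=O_p(a_p)$ and $\tilde L^Y_{st}=O_p(b_q)$, whence the inner product is $O_p(a_p b_q)$ and the leading term is $O_p(\tau a_p b_q)$. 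Proposition \ref{prop:taylor} gives $R_X=O_p(L_X^2)=O_p(a_p^2)$ and $R_Y=O_p(b_q^2)$; inserting these into the three remainder terms and tracking the $\tau_X,\tau_Y$ prefactors produces orders $\tau a_p b_q^2$, $\tau a_p^2 b_q$ and the strictly smaller $\tau a_p^2 b_q^2$, so that $\mathcal R_n=O_p(\tau a_p^2 b_q+\tau a_p b_q^2)$, which is $o_p(1)$ by the last display in Assumption \ref{D2}.
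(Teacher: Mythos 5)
Your proposal is correct and follows essentially the same route as the paper: expand $|X_s-X_t|/\tau_X$ via Proposition \ref{prop:taylor}, use the fact that additively separable (row-effect plus column-effect plus constant) terms are annihilated by the $\mathcal U$-centered inner product to reduce the leading term to the Gram-matrix inner product $\tau^{-1}(\widetilde{\mathbf P^X}\cdot\widetilde{\mathbf P^Y})$, identify it with $\sum_{i,j}\cov_n^2(\mathcal X_i,\mathcal Y_j)$ through the fourth-order $\mathcal U$-statistic representation of the inner product, and then, for fixed $n$, read off the orders $O_p(\tau a_pb_q)$ and $O_p(\tau a_p^2b_q+\tau a_pb_q^2)$ entrywise. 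The only difference is cosmetic (you split into components before invoking the combinatorial identity, whereas the paper's Lemma \ref{PropEquality} applies it to the full vectors and splits afterwards), and the one step you defer — the explicit bookkeeping behind the $\mathcal U$-statistic identity — is exactly the computation the paper carries out via its equation \eqref{eq:defDcov}.
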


Equation (\ref{eq:decomp}) in Theorem \ref{thm:decomp} shows that the leading term for sample distance covariance is the sum of all component-wise squared sample cross-covariances scaled by $\tau$, which depends on the marginal variances of $X$ and $Y$. This theorem suggests  that in the HDLSS setting, the sample distance covariance can only measure the component-wise linear dependence between the two random vectors.


\begin{remark}
	It is worth mentioning that the distance correlation and RV coefficient, introduced by \cite{escoufier1970echantillonnage} (see also \cite{josse2013measures}), are asymptotically equal in the HDLSS setting, where RV coefficient is another metric for quantifying the association between two random vectors and is defined as,
	\begin{align*}
	RV(X,Y) =  \frac{\sum_{i=1}^{p} \sum_{j=1}^{q} \text{cov}^2 \left( x_{i}, y_{j} \right) }{\sqrt{ \left(   \sum_{i,j=1}^{p} \text{cov}^2 ( x_{i},  x_{j})  \right) \left(   \sum_{i,j=1}^{q} \text{cov}^2 ( y_{i},  y_{j})  \right) }  }.
	\end{align*}
	If we use $cov_{n}^2 (\mathcal X_i, \mathcal Y_j)$ to estimate $cov^2(x_i, y_j)$, its sample version can be written as
	\begin{align*}
	RV_n(\mathbf X, \mathbf Y) = \frac{\sum_{i=1}^{p} \sum_{j=1}^{q} \text{cov}_{n}^2 \left( \mathcal X_{i}, \mathcal Y_{j} \right) }{\sqrt{ \left(  \sum_{i,j=1}^{p} \text{cov}_{n}^2 (\mathcal X_{i}, \mathcal X_{j})  \right) \left(  \sum_{i,j=1}^{q} \text{cov}_{n}^2 (\mathcal Y_{i}, \mathcal Y_{j})  \right) }  }.
	\end{align*}
	By taking the limit with respect to $p \wedge q$, under Assumptions \ref{D1}, \ref{D2}, \ref{D3} (which will be introduced later) and using Theorem \ref{thm:decomp}, we have
	\begin{align*}
	&  \text{dCor}_{n}^{2}(\mathbf X, \mathbf Y) \\
	= &  \frac{ \frac{1}{\tau} \sum_{i=1}^{p} \sum_{j=1}^{q} \text{cov}_{n}^2 (\mathcal X_{i}, \mathcal Y_{j}) \left(1+ o_p(1) \right)}{\sqrt{ \left( \frac{1}{\tau_{X}^2}  \sum_{i,j=1}^{p} \text{cov}_{n}^2 (\mathcal X_{i}, \mathcal X_{j}) \right) \left( \frac{1}{\tau_{Y}^2}  \sum_{i,j=1}^{q} \text{cov}_{n}^2 ( \mathcal Y_{i}, \mathcal Y_{j}) \right) }  } \\
	= &  \frac{  \sum_{i=1}^{p} \sum_{j=1}^{q} \text{cov}_{n}^2 (\mathcal X_{i}, \mathcal Y_{j}) (1 + o_p(1)) }{\sqrt{ \left(   \sum_{i,j=1}^{p} \text{cov}_{n}^2 (\mathcal X_{i}, \mathcal X_{j})  \right) \left(   \sum_{i,j=1}^{q} \text{cov}_{n}^2 (\mathcal Y_{i}, \mathcal Y_{j})  \right) }  }  \\
	= &  RV_n(\mathbf X, \mathbf Y)(1 + o_p(1)),
	\end{align*}
	which shows that the squared sample distance correlation and sample RV coefficient are approximately equal when the dimension is high. Consequently, they have the same limiting distribution as $p\wedge q$ goes to infinity. Since it is known from \cite{szekely2013} that the studentized version of sample distance covariance has a limiting $t$-distribution, it is expected that the studentized RV coefficient has the same limiting $t$-distribution as well.
\end{remark}

As argued previously, sample distance covariance ($dCov_n^2$) based tests suffer from power loss when $X$ and $Y$ are component-wisely non-linear dependent but uncorrelated. To remedy this drawback, we can consider the following aggregation of marginal sample distance covariances,
\[ mdCov_n^2 (\mathbf X,\mathbf Y)  = \sqrt{\binom{n}{2}}\sum_{i=1}^p \sum_{j=1}^q dCov_n^2(\mathcal X_i, \mathcal Y_j), \]
where $dCov_n^2(\mathcal X_i, \mathcal Y_j) = ( \widetilde{\mathbf{A}}(i) \cdot \widetilde{\mathbf{B}}(j) )$, $\widetilde{\mathbf{A}}(i)$ and $\widetilde{\mathbf{B}}(j)$ are the $\mathcal U$-centered versions of $\mathbf{A}(i) = (a_{st}(i))_{s,t=1}^n, \mathbf{B}(j) = (b_{st}(j))_{s,t=1}^n$ respectively and $a_{st}(i) = |x_{si}-x_{ti}|$, $b_{st}(j) = |y_{sj}-y_{tj}|$.

Note that $mdCov_n^2$ captures the pairwise low dimensional nonlinear dependence, which can be viewed as the main effects of the dependence between two high dimensional random vectors. It is natural in many fields of statistics to test for main effects first before proceeding to high order interactions. See \cite{chakraborty2018} for some discussions on main effects and high order effects in the context of joint dependence testing.
In the testing of mutual independence of a high dimensional vector, \cite{yao2017testing} also approached the problem by testing the pairwise independence using distance covariance and demonstrated that there may be intrinsic difficulty to capture the effects beyond main effects (pairwise dependence in the mutual independence testing problem), as the tests that target joint dependence do not perform well in the high dimensional setting.

\subsubsection{Hilbert-Schmidt Covariance}
\label{hCov}
A generalization of the Distance Covariance ($dCov$) is Hilbert-Schmidt Covariance ($hCov$), first proposed and aka Hilbert-Schmidt independence criterion ($HSIC$) by \cite{gretton2007}. In particular, the (squared) Hilbert-Schmidt Covariance ($hCov$) is obtained by kernelizing the Euclidean distance in equation \eqref{def:alt}, i.e.,
\begin{multline*}
hCov^{2} (X,Y) = E [ K(X, X') L(Y,Y') ] \\ + E [ K(X, X')] E [ L(Y,Y') ] - 2 E [ K(X,X') L(Y,Y'') ],
\end{multline*}
where $(X',Y'), (X'',Y'')$ are independent copies of $(X,Y)$ and $K, L$ are user specified kernels. Following the literature, we consider the following widely used kernels
$$ \left.
\begin{array}{l}
\text{Gaussian kernel: } K(x,y) = \exp \left(- \frac{|x-y|^2}{ 2\gamma^2} \right), \\
\text{Laplacian kernel: }  K(x,y) = \exp \left( -\frac{|x-y|}{ \gamma} \right),
\end{array} \right.
$$
where $\gamma$ is a bandwidth parameter. For later convenience, we focus on the kernels that can be represented compactly as $ K(x,y) =  f \left( |x-y|/ \gamma \right)$ for some continuously differentiable function $ f$. For example, the Gaussian and Laplacian kernel can be defined by choosing different function ${f}$,
$$ \left.
\begin{array}{l}
\text{Gaussian kernel: } {K}(x,y) =  f \left( \frac{|x-y|}{ \gamma} \right),   f(a) = \exp \left( - \frac{a^2}{ 2 } \right), \\
\text{Laplacian kernel: }  {K}(x,y) =  f \left( \frac{|x-y|}{ \gamma} \right),   f(a) = \exp \left( - a \right).
\end{array} \right.
$$
In practice, the bandwidth parameter is usually set as the median of pair-wise sample $L^2$ distance. Thus, a natural estimator for $hCov^2 (X,Y)$ is defined as
\[
hCov_n^2(\mathbf X, \mathbf Y) = ( \widetilde{\mathbf R} \cdot \widetilde{\mathbf H}),
\]
where $\widetilde{\mathbf{R}}$ and $\widetilde{\mathbf{H}}$ are the $\mathcal U$-centered versions of $\mathbf{R} = (r_{st})_{s,t=1}^n, \mathbf{H} = (h_{st})_{s,t=1}^n$ respectively and
\begin{align*} \left\lbrace
\begin{array}{l}
 r_{st} =  K(X_{s}, X_{t}, \mathbf X) =  f \left( \frac{|X_{s} - X_{t}|}{ \gamma_{\mathbf X}} \right), \gamma_{\mathbf X} = \text{median} \{ |X_{s} - X_{t}|, s \neq t \},\\
 h_{st} = L(Y_{s}, Y_{t}, \mathbf{Y}) = g \left( \frac{|Y_{s} - Y_{t}|}{\gamma_{\mathbf Y}} \right), \gamma_{\mathbf Y} = \text{median} \{ |Y_{s} - Y_{t}|, s \neq t \}.
\end{array}  \right.
\end{align*}
Similar to the definition of distance correlation, the Hilbert-Schmidt Correlation ($hCor$) is defined as
\begin{align*}
hCor^2(X,Y) = \left\lbrace  \begin{array}{ll}
\frac{hCov^2(X,Y)}{\sqrt{hCov^2(X,X) hCov^2(Y,Y) }}, &  hCov^2(X,X) hCov^2(Y,Y) >0, \\
0,	&  hCov^2(X,X) hCov^2(Y,Y) = 0,
\end{array} \right.
\end{align*}
and the sample Hilbert-Schmidt Correlation ($hCor_n^2$) is defined in the same way by
replacing $hCov^2$ with the corresponding sample version.
Next, we can extend the decomposition results for sample distance covariance ($dCov_n^2$) to sample Hilbert-Schmidt covariance ($hCov_n^2$) as shown in the following theorem.
\begin{thm}
	\label{thm:decompHsic}
	Under Assumption \ref{D1}, we have
	\begin{itemize}
		\item[(i)] 	
		\begin{multline}
		\label{eq:decompHSIC}
		\tau \times hCov^2_{n}(\mathbf X, \mathbf Y) \\ =  f^{(1)} \left( \frac{\tau_{X}}{\gamma_{\mathbf X}} \right)  g^{(1)} \left( \frac{\tau_{Y}}{ \gamma_{\mathbf Y}} \right) \frac{\tau_{X}}{\gamma_{\mathbf X}}  \frac{\tau_{Y}}{\gamma_{ \mathbf Y}}  \frac{1}{\tau} \sum\limits_{i=1}^{p} \sum\limits_{j=1}^{q} \text{cov}_{n}^2 (\mathcal X_{i}, \mathcal Y_{j}) + \mathcal{R}_{n},
		\end{multline}
		where $ \text{cov}_{n}^2 $ is defined the same as in Theorem \ref{thm:decomp} and $\mathcal{R}_{n}$ is the remainder term.
		\item[(ii)] Further suppose Assumption \ref{D2} holds. Then
		\begin{align*}
		&  f^{(1)} \left( \frac{\tau_{X}}{\gamma_{\mathbf X}} \right)  g^{(1)} \left( \frac{\tau_{Y}}{ \gamma_{\mathbf Y}} \right)   \frac{\tau_{X}}{\gamma_{\mathbf X}} \frac{\tau_{Y}}{\gamma_{ \mathbf Y}}  \asymp_p 1, \\
		& \frac{1}{\tau} \sum_{i=1}^p \sum_{j=1}^q \cov_n^2 \left(\mathcal X_i, \mathcal Y_j \right) = O_p (\tau a_p b_q), \\
		& {\cal R}_n =  O_p(\tau a_p^2b_q + \tau a_pb_q^2) = o_p (1).
		\end{align*}
		Thus the remainder term is of smaller order compared to the leading term and is therefore asymptotically  negligible.
	\end{itemize}
\end{thm}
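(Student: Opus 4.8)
The plan is to mirror the proof of Theorem~\ref{thm:decomp}, replacing the single Taylor expansion of the Euclidean distance by a two-stage expansion: first expand the \emph{argument} of the kernel via Proposition~\ref{prop:taylor}, then expand the kernel function $f$ (resp.\ $g$) about the concentration point of that argument. Writing $\theta_{\mathbf X}:=\tau_X/\gamma_{\mathbf X}$ and $z_{st}:=|X_s-X_t|/\gamma_{\mathbf X}$, Proposition~\ref{prop:taylor} gives $z_{st}=\theta_{\mathbf X}\big(1+\tfrac12 L_X(X_s,X_t)+R_X(X_s,X_t)\big)$, so a second-order Taylor expansion of $f$ about $\theta_{\mathbf X}$ yields
\[ r_{st}=f(\theta_{\mathbf X})+f^{(1)}(\theta_{\mathbf X})\,\theta_{\mathbf X}\Big(\tfrac12 L_X(X_s,X_t)+R_X(X_s,X_t)\Big)+\tfrac12 f^{(2)}(\xi_{st})\,\theta_{\mathbf X}^2\Big(\tfrac12 L_X+R_X\Big)^2, \]
and symmetrically for $h_{st}$ with $g,\theta_{\mathbf Y},L_Y,R_Y$. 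I would isolate the single term $f^{(1)}(\theta_{\mathbf X})\theta_{\mathbf X}\cdot\tfrac12 L_X$ as the leading fluctuation and push everything containing $R_X$ or a square into the remainder.

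Next I would pass to the $\mathcal U$-centered matrices. The scalar $f^{(1)}(\theta_{\mathbf X})\theta_{\mathbf X}$ depends on the sample only through $\gamma_{\mathbf X}$ and hence factors out of the $\mathcal U$-centering, while the constant term $f(\theta_{\mathbf X})$ is annihilated by $\mathcal U$-centering (up to an $O(1/n)$ contribution from the diagonal entry $f(0)$, handled exactly as in Theorem~\ref{thm:decomp}). Since $L_X(X_s,X_t)=\big(|X_s-X_t|^2-\tau_X^2\big)/\tau_X^2$ and the additive constant again drops under $\mathcal U$-centering, the leading part of $\widetilde{\mathbf R}$ is $\tfrac{1}{2\tau_X^2}f^{(1)}(\theta_{\mathbf X})\theta_{\mathbf X}$ times the $\mathcal U$-centered squared-distance matrix $\widetilde{\mathbf A}_2$, $(\mathbf A_2)_{st}=|X_s-X_t|^2$, and likewise $\widetilde{\mathbf H}$ reduces to $\widetilde{\mathbf B}_2$, $(\mathbf B_2)_{st}=|Y_s-Y_t|^2$. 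Forming the inner product and invoking the algebraic identity $(\widetilde{\mathbf A}_2\cdot\widetilde{\mathbf B}_2)=4\sum_{i,j}\cov_n^2(\mathcal X_i,\mathcal Y_j)$ that is established en route to Theorem~\ref{thm:decomp}, then multiplying through by $\tau=\tau_X\tau_Y$, reproduces \eqref{eq:decompHSIC} with exactly the prefactor $f^{(1)}(\tau_X/\gamma_{\mathbf X})g^{(1)}(\tau_Y/\gamma_{\mathbf Y})\tfrac{\tau_X}{\gamma_{\mathbf X}}\tfrac{\tau_Y}{\gamma_{\mathbf Y}}$.

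For part (ii): under Assumption~\ref{D2} the finitely many normalized pairwise distances satisfy $|X_s-X_t|/\tau_X=1+o_p(1)$, so their sample median obeys $\gamma_{\mathbf X}/\tau_X\to_p 1$, i.e.\ $\theta_{\mathbf X}\to_p 1$, and likewise $\theta_{\mathbf Y}\to_p 1$; by continuity of $f^{(1)},g^{(1)}$ near $1$ and $f^{(1)}(1)g^{(1)}(1)\neq0$ (valid for the Gaussian and Laplacian kernels) the prefactor converges in probability to the nonzero constant $f^{(1)}(1)g^{(1)}(1)$ and is therefore $\asymp_p 1$. The order of the leading sum, $\tfrac1\tau\sum\cov_n^2=O_p(\tau a_p b_q)$, is inherited verbatim from Theorem~\ref{thm:decomp}(ii). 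It then remains to bound $\mathcal R_n$, which collects three kinds of contributions — (leading in $X$)$\times$(remainder in $Y$), (remainder in $X$)$\times$(leading in $Y$), and (remainder)$\times$(remainder) — where each remainder is $O_p(R_X)=O_p(L_X^2)=O_p(a_p^2)$, resp.\ $O_p(b_q^2)$, by Proposition~\ref{prop:taylor} and Assumption~\ref{D2}. After $\mathcal U$-centering, forming the inner product, and multiplying by $\tau$, these contributions are $O_p(\tau a_p^2 b_q+\tau a_p b_q^2)=o_p(1)$, exactly as for $dCov_n^2$.

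The main obstacle is the \emph{data-dependent bandwidth}. Because $\gamma_{\mathbf X}$ is the sample median of the pairwise distances, the expansion point $\theta_{\mathbf X}$ is random and statistically entangled with the very arguments $z_{st}$ being expanded, so one cannot treat it as a fixed center. The remedy is to work on a high-probability event (guaranteed by Assumption~\ref{D2}) on which $\theta_{\mathbf X}$ and every $z_{st}$ lie in a fixed compact neighborhood of $1$, where the continuous second derivative $f^{(2)}$ is uniformly bounded; this is precisely where the smoothness of $f$ near $1$ enters, letting the Lagrange remainder $\tfrac12 f^{(2)}(\xi_{st})\theta_{\mathbf X}^2(\cdots)^2$ be controlled uniformly over the pairs $(s,t)$. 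Propagating this uniform control through the $\mathcal U$-centering and the bilinear inner product, while keeping track of the $O(1/n)$ diagonal corrections for fixed $n$, is the most delicate part of the argument.
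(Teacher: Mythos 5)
Your proposal follows essentially the same route as the paper's proof: compose the expansion of Proposition \ref{prop:taylor} with a second-order Taylor expansion of $f$ (resp.\ $g$) about the random point $\tau_X/\gamma_{\mathbf X}$, factor the scalar $f^{(1)}(\tau_X/\gamma_{\mathbf X})\tau_X/\gamma_{\mathbf X}$ out of the $\mathcal U$-centering, reduce the leading term to $(\widetilde{\mathbf L}_X\cdot\widetilde{\mathbf L}_Y)$ and apply the identity with $\sum_{i,j}\cov_n^2$, establish $\tau_X/\gamma_{\mathbf X}\overset{p}{\to}1$ via the sample median, and bound the kernel remainder by $O_p(L_X^2)$ using continuity of $f^{(2)}$ near $1$ (the paper's Lemma \ref{TailOder}, stated with an integral-form remainder and the continuous mapping theorem rather than your Lagrange form and high-probability event, which are interchangeable for fixed $n$). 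The argument is correct and no genuinely different idea is introduced.
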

Notice that different from the decomposition of $dCov_n^2(\mathbf X,\mathbf Y)$ as in Theorem \ref{thm:decomp}, here we decompose $hCov_n^2$ multiplied by $\tau=\tau_{X} \tau_{Y}$. This is expected, since in $hCov_n^2$, each pair-wise distance is normalized by $\gamma_{\mathbf X}$ or $\gamma_{\mathbf Y}$, which has asymptotically the same magnitude as $\tau_{X}$, $\tau_{Y}$ respectively.
In the high dimensional case, the expansion \eqref{eq:decompHSIC} suggests that $hCov$-based tests also suffer from power loss when $X$ and $Y$ are component-wisely uncorrelated but nonlinearly dependent.

To analyze the asymptotic property of sample Hilbert-Schmidt covariance, most literature would assume the bandwidth parameters to be fixed constants, see e.g. \cite{gretton2007}. In  contrast, our approach can handle the case where these bandwidth parameters are selected to be the median of pairwise sample distance, which is random and whose magnitude increases with dimension.

Similar to the marginal distance covariance introduced in Section \ref{dCov}, we can also aggregate the marginal Hilbert-Schmidt Covariance ($mhCov$), which is defined as
\[ mhCov_n^2 (\mathbf X, \mathbf Y)  = \sqrt{\binom{n}{2}}\sum_{i=1}^p \sum_{j=1}^q hCov_n^2( \mathcal X_i,\mathcal Y_j) \]
where $hCov_n^2(\mathcal X_i, \mathcal Y_j) = (\widetilde{\mathbf R}(i) \cdot \widetilde{\mathbf H}(j))$, $\widetilde{\mathbf R}(i)$ and $\widetilde{\mathbf H}(j)$  are $\mathcal U$-centered version of $\mathbf R (i)= (r_{st}(i))_{s,t=1}^n$, $\mathbf H(j) = (h_{st}(j))_{s,t=1}^n$ respectively and
\begin{align*} \left\lbrace
\begin{array}{l}
 r_{st}(i) =  K(x_{si}, x_{ti}, \mathcal{X}_i) =  f \left( \frac{|x_{si} - x_{ti}|}{ \gamma_{\mathcal{X}_i}} \right), \gamma_{\mathcal X_{i}} = \text{median} \{ |x_{si} - x_{ti}|, s \neq t \},\\
 h_{st}(j) =  L(y_{sj}, y_{tj}, \mathcal{Y}_j) =  g \left( \frac{|y_{sj} - y_{tj}|}{\gamma_{\mathcal Y_j}} \right), \gamma_{\mathcal Y_{j}} = \text{median} \{ |y_{sj} - y_{tj}|, s \neq t \}.
\end{array}  \right.
\end{align*}

\subsection{Studentized Test Statistics}
\label{student}
In this section, we provide studentized version of the statistics introduced in Section \ref{statistics}. It is worth mentioning that we provide a unified approach to the asymptotic analysis of studentized $dCov, mdCov$ and further extend them to the analysis of studentized $hCov$.

\subsubsection{Unified Approach}
\label{sec:uni}
Firstly, we will present results that will be useful for deriving the studentized version of the interested statistics, i.e. distance covariance ($dCov$), marginal distance covariance ($mdCov$), Hilbert-Schmidt Covariance ($hCov$), marginal Hilbert-Schmidt Covariance ($mhCov$). It can be shown later that many previously mentioned statistics are asymptotically equal to the unified quantity $uCov^2_n(\mathbf X, \mathbf Y)$ multiplied by some normalizing factor. Here, $uCov^2_n(\mathbf X, \mathbf Y)$ is defined as
\begin{align*}
uCov^2_{n}(\mathbf X, \mathbf Y) = \frac{1}{\sqrt{pq}}\sum^{p}_{i=1}\sum^{q}_{j=1} (  \widetilde{\mathbf K}(i) \cdot \widetilde{\mathbf L}(j) ),
\end{align*}
where $\widetilde{\mathbf K}(i)$ and $\widetilde{\mathbf L}(j)$ are the $\mathcal U$-centered versions of $\mathbf K(i) = (k_{st}(i))_{s,t=1}^n ,\mathbf L (j) = (l_{st}(j))_{s,t=1}^n $ respectively and $k_{st}(i)$, $l_{st}(i)$ are the double centered kernel distances, i.e., for bivariate kernels $k$ and $l$,
\begin{align*}
&k_{st}(i)=k(x_{si},x_{ti})-E[k(x_{si},x_{ti})|x_{si}]-E[k(x_{si},x_{ti})|x_{ti}]+E[k(x_{si},x_{ti})], \\
&l_{st}(i)=l(y_{si},y_{ti})-E[l(y_{si},y_{ti})|y_{si}]-E[l(y_{si},y_{ti})|y_{ti}]+E[l(y_{si},y_{ti})].
\end{align*}
The advantage of using the double centering kernel distance is that we can have 0 covariance between $k_{st}(i)$ and $k_{uv}(j)$ ($l_{st}(i)$ and $l_{uv}(j)$) for $ \{ s,t \} \neq \{ u,v   \}$ as shown in the following proposition.
\begin{proposition}
\label{prop:ind}
For all $1 \leq i, i' \leq q, 1\leq j, j' \leq p, \text{ if } \{ s, t \} \neq \{ u, v \}$, then
\begin{align*}
E[ k_{st}(i) k_{uv}(i') ] = E[ l_{st}(j) l_{uv}(j') ] =  E[ k_{st}(i) l_{uv}(j) ] = 0.
\end{align*}
\end{proposition}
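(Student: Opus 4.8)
The plan is to exploit the defining ``degeneracy'' of the double centering together with the independence of the samples across the first (sample) index, the coordinate indices $i,i',j,j'$ being fixed spectators throughout. The first step is to record the elementary fact that, because $s\neq t$ forces $x_{si}$ and $x_{ti}$ to be independent copies of the $i$th coordinate of $X$, the centered quantity $k_{st}(i)$ has conditional mean zero given either of its arguments:
\[
E[k_{st}(i)\mid x_{si}]=E[k_{st}(i)\mid x_{ti}]=0.
\]
Writing $g_1(x_{si})=E[k(x_{si},x_{ti})\mid x_{si}]$, $g_2(x_{ti})=E[k(x_{si},x_{ti})\mid x_{ti}]$ and $\mu=E[k(x_{si},x_{ti})]$, we have $k_{st}(i)=k(x_{si},x_{ti})-g_1(x_{si})-g_2(x_{ti})+\mu$, and conditioning on $x_{ti}$ the independence $x_{si}\perp x_{ti}$ gives $E[k(x_{si},x_{ti})\mid x_{ti}]=g_2(x_{ti})$ and $E[g_1(x_{si})\mid x_{ti}]=\mu$, so the four terms cancel. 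The identical statement holds for $l_{st}(j)$ with $Y$ in place of $X$.

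The combinatorial heart of the argument is that whenever $\{s,t\}\neq\{u,v\}$ as unordered pairs, the difference $\{s,t\}\setminus\{u,v\}$ is nonempty, since two distinct $2$-element sets cannot satisfy containment. Relabelling within the pair if needed, I may therefore assume $s\notin\{u,v\}$; as $s\neq t$ as well, the index $s$ is distinct from all of $t,u,v$. Because $(X_1,Y_1),\dots,(X_n,Y_n)$ are i.i.d., the entire sample $(X_s,Y_s)$ is then independent of $(X_t,X_u,X_v,Y_t,Y_u,Y_v)$, on which every remaining factor depends. For the $k$--$k$ term I would condition on $(X_t,X_u,X_v)$: the factor $k_{uv}(i')$ is measurable with respect to this $\sigma$-field, while $k_{st}(i)$ involves $X_s$, independent of the conditioning variables, together with $x_{ti}$, so
\[
E\big[k_{st}(i)\,k_{uv}(i')\mid X_t,X_u,X_v\big]=k_{uv}(i')\,E[k_{st}(i)\mid x_{ti}]=0
\]
by the first step, and taking expectations yields $E[k_{st}(i)k_{uv}(i')]=0$. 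The $l$--$l$ term is word-for-word the same with $Y$ replacing $X$.

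For the mixed term the same free index $s\in\{s,t\}\setminus\{u,v\}$ works: conditioning on $(X_t,Y_u,Y_v)$, the factor $l_{uv}(j)$ is measurable, while $x_{si}$ remains independent of the conditioning variables, so $E[k_{st}(i)\mid X_t,Y_u,Y_v]=E[k_{st}(i)\mid x_{ti}]=0$ again, giving $E[k_{st}(i)l_{uv}(j)]=0$. I expect no serious obstacle here, as the statement is a clean consequence of Hoeffding-type degeneracy; the only points needing care are (a) verifying that a free index always exists and can be taken inside the pair $\{s,t\}$, which lets all three cases be handled by one conditioning argument, and (b) observing that the mixed case uses only independence across the sample index and never any independence between $X$ and $Y$, so the vanishing covariance holds under both the null and the alternative.
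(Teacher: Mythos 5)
Your proof is correct and follows essentially the same route as the paper's: both rest on the Hoeffding-type degeneracy $E[k_{st}(i)\mid x_{si}]=E[k_{st}(i)\mid x_{ti}]=0$ combined with independence across the sample index. The only cosmetic difference is that you condition on the samples indexed by $\{t,u,v\}$ so that the second factor becomes measurable, whereas the paper dispatches the disjoint case trivially and, for overlapping pairs, conditions on the shared sample's coordinates and factors the conditional expectation into a product; both reduce to the same vanishing conditional mean.
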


To derive the limiting distribution of the unified quantity, we need the following assumptions.
\begin{myassumption}{D3} \label{D3} For fixed $n$, as $p \wedge q \rightarrow \infty$,
	\begin{align*}
	\left(
	\begin{array}{c}
	p^{-1/2}\sum^{p}_{i=1}k_{st}(i)   \\
	q^{-1/2}\sum^{q}_{j=1}l_{uv}(j)
	\end{array} \right)_{  s < t ,  u < v }  \overset{d}{\rightarrow} \left(
	\begin{array}{c}
	c_{st} \\
	d_{uv}
	\end{array} \right)_{ s < t ,  u < v },
	\end{align*}
	where $\{ c_{st}, d_{uv} \}_{s<t,u<v}$ are jointly Gaussian. Naturally, we further assume the existence of the following constants that show up in the covariance matrix of $\{ c_{st}, d_{uv} \}$,
	\begin{align*}
	\begin{array}{rl}
	var[c_{st}] := & \sigma_{x}^2 \\
	 =&  \lim\limits_{p} \frac{1}{p} \sum\limits_{i,j=1}^p cov[ k_{st}(i), k_{st}(j)]  \\  = & \left\lbrace
	\begin{array}{ll}
	\lim\limits_{p} \frac{\sum\limits_{i,j=1}^p dCov^2(x_{i}, x_{j})}{p} , & \text{ if } k(x,y)= l(x, y) = |x-y|, \\
	\lim\limits_{p} \frac{\sum\limits_{i,j=1}^p 4 cov^2 (x_{i}, x_{j})}{p} , & \text{ if } k(x,y) = l(x,y)= |x-y|^2 ,
	\end{array} \right.  \\
	 var[d_{st}] : = & \sigma_{y}^2 \\
	 = &  \lim\limits_{q} \frac{1}{q} \sum\limits_{i,j=1}^q cov[ l_{st}(i), l_{st}(j)]   \\
	 = & \left\lbrace
	\begin{array}{ll}
	\lim\limits_{q} \frac{\sum\limits_{i,j=1}^q  dCov^2(y_{i}, y_{j})}{q} , & \text{ if } k(x,y)= l(x,y)= |x-y|, \\
	\lim\limits_{q} \frac{\sum\limits_{i,j=1}^q 4 cov^2 (y_{i}, y_{j})}{q} , & \text{ if } k(x,y)=l(x,y)= |x-y|^2 ,
	\end{array} \right. \\
	 cov[c_{st}, d_{st}] := & \sigma_{xy}^2 \\
	 = & \lim\limits_{p,q} \frac{1}{\sqrt{pq}} \sum\limits_{i=1}^p \sum\limits_{j=1}^q cov[ k_{st}(i), l_{st}(j)]
 \\	 = & \left\lbrace
	\begin{array}{ll}
	\lim\limits_{p,q} \frac{\sum\limits_{i=1}^p \sum\limits_{j=1}^q dCov^2(x_{i}, y_{j})}{\sqrt{pq}} , & \text{ if } k(x,y)=l(x,y)= |x-y|, \\
	\lim\limits_{p,q} \frac{\sum\limits_{i=1}^p \sum\limits_{j=1}^q 4 cov^2 (x_{i}, y_{j})}{\sqrt{pq}} , & \text{ if } k(x,y)= l(x,y)= |x-y|^2.
	\end{array} \right.
	\end{array}
	\end{align*}
\end{myassumption}
\begin{remark}
	Notice that when $ \{ s,t \} \neq \{ u,v \}$, we do not assume the form of $cov[c_{st}, c_{uv}]$, $cov[d_{st}, d_{uv}]$, $cov[c_{st}, d_{uv}]$ in Assumption \ref{D3}, since it follows easily from Proposition \ref{prop:ind} that $cov[c_{st}, c_{uv}] =0, cov[d_{st}, d_{uv}] =0$ and $cov[c_{st}, d_{uv}] =0$ if $ \{ s,t \} \neq \{ u,v   \}.$
\end{remark}
\begin{remark}
	The above Central Limit Theorem (CLT) result can be derived under suitable moment and weak dependence assumptions for the components of $X$ and $Y$. We refer the reader to \cite{doukhan2008} for a relatively recent survey of weak dependence notions and the CLT results under such weak dependence.
\end{remark}
The following theorem is our main result, which shows that the unified quantity converges in distribution to a quadratic form of random variables.
\begin{thm}
	\label{thm:key}
	Fixing $n$ and let $p \wedge q \rightarrow \infty$, under Assumptions \ref{D1} and \ref{D3},
	\begin{align*}
	& uCov^2_{n}(\mathbf X,\mathbf Y) \overset{d}{\rightarrow}  \frac{1}{v} \mathbf{c}^T \mathbf{M} \mathbf{d} ,\\
	& uCov^2_{n}(\mathbf X,\mathbf X) \overset{d}{\rightarrow}  \frac{1}{v} \mathbf{c}^T \mathbf{M} \mathbf{c}  \overset{d}{=} \frac{\sigma^2_x}{v}  \chi^2_{v},\\
	& uCov^2_{n}(\mathbf Y, \mathbf Y) \overset{d}{\rightarrow}  \frac{1}{v} \mathbf{d}^T \mathbf M \mathbf{d} \overset{d}{=} \frac{\sigma^2_y}{v}  \chi^2_{v},
	\end{align*}
	where $v:=  n(n-3)/2$, $ \mathbf M$ is a projection matrix of rank $v$ and
	\begin{align*} \left(
	\begin{array}{c}
	\mathbf{c} \\
	\mathbf{d}
	\end{array} \right)  \overset{d}{=} N \left(\mathbf 0, \left(
	\begin{array}{cc}
	\sigma_{x}^2 \mathbf{I}_{n(n-1)/2} & \sigma_{xy}^2 \mathbf{I}_{n(n-1)/2}  \\
	\sigma_{xy}^2 \mathbf{I}_{n(n-1)/2} & \sigma_{y}^2 \mathbf{I}_{n(n-1)/2}
	\end{array} \right)
	\right).
	\end{align*}
\end{thm}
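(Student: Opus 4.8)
The plan is to reduce the statement to a single application of the continuous mapping theorem, after rewriting $uCov_n^2$ as a fixed bilinear form in the two aggregated, normalized kernel vectors whose joint limit is supplied by Assumption \ref{D3}. First I would exploit the bilinearity of the $\mathcal U$-centered inner product together with the product structure $\sum_{i,j}a_ib_j=(\sum_i a_i)(\sum_j b_j)$ to interchange the summation over coordinates with the summation over sample pairs. Writing $C_{st}^{(p)}:=p^{-1/2}\sum_{i=1}^p k_{st}(i)$ and $D_{st}^{(q)}:=q^{-1/2}\sum_{j=1}^q l_{st}(j)$, and using that $\mathcal U$-centering is a linear operation (so it commutes with the coordinate averages), this gives
\[
uCov_n^2(\mathbf X,\mathbf Y)=\frac{1}{n(n-3)}\sum_{s\neq t}\widetilde{C}^{(p)}_{st}\,\widetilde{D}^{(q)}_{st},
\]
where $\widetilde{C}^{(p)},\widetilde{D}^{(q)}$ are the $\mathcal U$-centered versions of the hollow symmetric matrices $(C_{st}^{(p)})$ and $(D_{st}^{(q)})$.

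The hard part is to recast this expression as $v^{-1}(\mathbf C^{(p)})^T\mathbf M\,\mathbf D^{(q)}$ for a fixed projection matrix $\mathbf M$ of rank $v=n(n-3)/2$, where $\mathbf C^{(p)},\mathbf D^{(q)}\in\R^{n(n-1)/2}$ collect the off-diagonal entries $\{C_{st}^{(p)}\}_{s<t}$ and $\{D_{st}^{(q)}\}_{s<t}$. Viewing $\mathcal U$-centering as a linear operator $J$ on the space of symmetric hollow matrices (equipped with the inner product $\langle\mathbf A,\mathbf B\rangle=\sum_{s<t}a_{st}b_{st}$), I would verify three facts. (i) The image of $J$ consists of matrices with vanishing row sums: a direct computation of $\sum_{t}\tilde a_{st}$ using the $\mathcal U$-centering weights $1/(n-2)$ and $1/((n-1)(n-2))$ shows it telescopes to $0$. (ii) $J$ is idempotent: if $\mathbf A$ has zero diagonal and zero row (hence grand) sums, every correction term in the $\mathcal U$-centering formula vanishes and $J\mathbf A=\mathbf A$. (iii) $J$ is an \emph{orthogonal} projection: its kernel is exactly the ``additive'' subspace $\{a_{st}=u_s+u_t\}$ (a short substitution shows every such matrix is annihilated), and one checks this subspace is the orthogonal complement of the zero-row-sum subspace and has dimension $n$, leaving $\dim(\mathrm{im}\,J)=n(n-1)/2-n=v$. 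Since $J$ is then a symmetric idempotent of rank $v$, taking $\mathbf M$ to be its matrix in the standard basis and using $\sum_{s\neq t}=2\sum_{s<t}$ together with $J=J^T=J^2$ yields $\frac{1}{n(n-3)}\sum_{s\neq t}\widetilde C_{st}\widetilde D_{st}=v^{-1}\langle J\mathbf C,J\mathbf D\rangle=v^{-1}(\mathbf C^{(p)})^T\mathbf M\,\mathbf D^{(q)}$.

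With the bilinear representation in hand I would conclude as follows. Assumption \ref{D3} gives the joint weak convergence $(\mathbf C^{(p)},\mathbf D^{(q)})\overset{d}{\rightarrow}(\mathbf c,\mathbf d)$, so the continuous map $(\mathbf x,\mathbf y)\mapsto v^{-1}\mathbf x^T\mathbf M\mathbf y$ transfers it to $uCov_n^2(\mathbf X,\mathbf Y)\overset{d}{\rightarrow}v^{-1}\mathbf c^T\mathbf M\mathbf d$, and likewise for the $(\mathbf X,\mathbf X)$ and $(\mathbf Y,\mathbf Y)$ quantities. To pin down the covariance structure of $(\mathbf c,\mathbf d)$, note that the pre-limit covariances factor as $\cov(C_{st}^{(p)},C_{uv}^{(p)})=p^{-1}\sum_{i,i'}E[k_{st}(i)k_{uv}(i')]$, which by Proposition \ref{prop:ind} vanishes unless $\{s,t\}=\{u,v\}$; combined with exchangeability of the samples (making the diagonal variances equal across pairs) and the definitions of $\sigma_x^2,\sigma_y^2,\sigma_{xy}^2$ in Assumption \ref{D3}, this delivers the stated covariance $\mathbf c\sim N(\mathbf 0,\sigma_x^2\mathbf I)$, $\mathbf d\sim N(\mathbf 0,\sigma_y^2\mathbf I)$, $\cov(\mathbf c,\mathbf d)=\sigma_{xy}^2\mathbf I$. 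Finally, since $\mathbf M$ is an orthogonal projection of rank $v$ and $\mathbf c\sim N(\mathbf 0,\sigma_x^2\mathbf I)$, the standard quadratic-form fact gives $\mathbf c^T\mathbf M\mathbf c/\sigma_x^2\sim\chi^2_v$, hence $v^{-1}\mathbf c^T\mathbf M\mathbf c\overset{d}{=}(\sigma_x^2/v)\chi^2_v$, and symmetrically for $\mathbf d$. I expect step (iii) — identifying $J$ as an orthogonal projection of the correct rank, rather than merely an idempotent — to be the main obstacle, since it is precisely what guarantees both the $\chi^2_v$ limits and the self-adjointness needed to pass from $\langle J\mathbf C,J\mathbf D\rangle$ to the symmetric form $\mathbf c^T\mathbf M\mathbf d$.
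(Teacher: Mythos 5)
Your proposal is correct, and while the probabilistic skeleton coincides with the paper's (interchange the coordinate and pair sums, invoke Assumption \ref{D3} plus the continuous mapping theorem, reduce to a bilinear form $v^{-1}\mathbf c^T\mathbf M\mathbf d$ with $\mathbf M$ a rank-$v$ projection, and finish with the standard quadratic-form fact for the $\chi^2_v$ marginals), your treatment of the central linear-algebra step is genuinely different. The paper works entirely in coordinates: it vectorizes the $\mathcal U$-centering as $\mathbf F\mathbf S\mathbf F$, conjugates by a permutation $\mathbf P_1$ to separate upper-triangular, lower-triangular and diagonal blocks, extracts $\mathbf M=\mathbf U_1+\mathbf U_2$ from the block structure, proves idempotency by expanding $\mathbf W^2=2\mathbf W$, and pins down the rank by an explicit trace computation $\tfrac{n(n-1)}{4}\,\widetilde{\mathrm{vec}}(\widetilde{\mathbf E}_1)^T\widetilde{\mathrm{vec}}(\widetilde{\mathbf E}_1)=n(n-3)/2$. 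You instead characterize the $\mathcal U$-centering operator $J$ intrinsically on the space of hollow symmetric matrices: its image is the zero-row-sum subspace (by your facts (i) and (ii) together, giving $\dim=n(n-1)/2-n=v$ directly), its kernel is the $n$-dimensional additive subspace $\{a_{st}=u_s+u_t\}$, and these are orthogonal complements under $\sum_{s<t}a_{st}b_{st}$ because the additive subspace is exactly the row space of the row-sum map — so $J$ is automatically a self-adjoint idempotent of rank $v$ and no trace computation or block bookkeeping is needed. Your route buys a cleaner conceptual explanation of \emph{why} the rank is $n(n-3)/2$ (it is $\binom{n}{2}$ minus the $n$ independent row-sum constraints), while the paper's explicit construction delivers the concrete matrix form of $\mathbf M$ that it references later; both are complete and rigorous.
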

\begin{remark}
		For the exact form of $\mathbf M$, see the proof of Theorem \ref{thm:key} in the Appendix.
\end{remark}
Next, we define the quantity $T_u$ as
\begin{align*}
T_u  = \sqrt{v-1} \frac{uCor_n^2(\mathbf X, \mathbf Y)}{ \sqrt{1 - (uCor_n^2(\mathbf X, \mathbf Y))^{2} } },
\end{align*}
where
\begin{align*}
uCor_n^2(\mathbf X,\mathbf Y) = \frac{ uCov_n^2(\mathbf X,\mathbf Y) }{\sqrt{ uCov_n^2(\mathbf X,\mathbf X) uCov_n^2(\mathbf Y,\mathbf Y)} }.
\end{align*}
We then define the constants $v$ and $\phi$ that appear in the limiting distribution of $T_u$. Set $v = n(n-3)/2$ and $ \phi = \sigma_{xy}^2 / \sqrt{\sigma_{x}^2\sigma_{y}^2}  $ such that
\begin{align*}
\phi = \phi_{1}\mathbb{I}_{\{ k(x,y)= l(x,y)= |x-y| \}}  + \phi_{2}\mathbb{I}_{\{ k(x,y)= l(x,y)= |x-y|^2 \}},
\end{align*}
where
\begin{align*}
\phi_{1} := & \lim_{p,q} \frac{ \sum_{i=1}^p \sum_{j=1}^q dCov^2(x_{i}, y_{j})}{ \sqrt{  \sum_{i,j=1}^p dCov^2(x_{i}, x_{j})   \sum_{i,j=1}^q dCov^2(y_{i}, y_{j}) }}, \\
\phi_{2} := & \lim_{p,q}  \frac{  \sum_{i=1}^p \sum_{j=1}^{q}  cov^2 (x_{i}, y_{j}) }{ \sqrt{  \sum_{i,j=1}^p  cov^2 (x_{i}, x_{j})   \sum_{i,j=1}^q  cov^2 (y_{i}, y_{j}) } }.
\end{align*}
The limiting distribution of $T_u$ is derived under both null ($H_0$) and alternative $(H_{A})$ hypothesis, i.e.,
$$
\begin{array}{rl}
\text{null hypothesis}: & H_0 = \left\lbrace  (X,Y) \; | \; X \perp Y \right\rbrace  , \\
\text{alternative hypothesis}: & H_A = \{ (X,Y) \; | \; X \not\perp Y \}.
\end{array}
$$
In addition, we also consider the local alternative hypothesis $H_{A_l} \subset H_A$, i.e.,
\begin{align*}
H_{A_{l}} = \left\lbrace  (X,Y) \; \left| \; X \not\perp Y, \phi = \frac{ \phi_{0}}{\sqrt{v}} \right. \right\rbrace ,
\end{align*}
where $v = n(n-3)/2$, $\phi_{0} = \phi_{0,1} \mathbb{I}_{\{ k(x,y)= l(x,y)= |x-y| \}} + \phi_{0,2} \mathbb{I}_{\{k(x,y)= l(x,y)= |x-y|^2\}}$ and $ 0< \phi_{0,1}, \phi_{0,2}< \infty\ $ are constants with respect to $n$. It is also insteresting to compare the asymptotic power under the following class of alternatives $H_{A_{s}} \subset H_A$, i.e.,
\begin{align*}
H_{A_{s}} = \{ (X,Y) \; | \; x_i \not\perp y_j, cov(x_i, y_j) = 0 \text{ for all } 1\leq i \leq p, 1 \leq j \leq q \}.
\end{align*}
In summary, the following table illustrates the value of $\phi$ under different cases we are considering,
$$
\begin{array}{c|cccc}
\phi & H_0 & H_A & H_{A_{l}} & H_{A_{s}} \\ \hline
k(x,y)= l(x,y)= |x-y| & 0 & \phi_1 &\frac{\phi_{0,1}}{\sqrt{v}} & \phi_1 \\
k(x,y)= l(x,y)= |x-y|^2 & 0 & \phi_2 & \frac{\phi_{0,2}}{\sqrt{v}} & 0
\end{array}
$$
Next, let $t_a$ denote the student $t$-distribution with degrees of freedom $a$, $t_{a}^{(\alpha)}$ denotes the $(1-\alpha)$th percentile of $t_a$, $t_{a,b}$ denotes the non-central $t$-distribution with degrees of freedom $a$ and non-central parameter $b$. The asymptotic distribution of $T_u$ is stated in the following proposition.
\begin{proposition}\label{prop:exactT}
	Fix $n$ and let $p \wedge q \rightarrow \infty$. If Assumptions \ref{D1} and \ref{D3} hold, then for any fixed $t \in \mathbb{R}$,
	\begin{align*}
	& P_{H_0}(T_u \leq t ) \rightarrow	 P(t_{v-1} \leq t), \\
	& P_{H_A}(T_u \leq t ) \rightarrow E \left[ P \left( t_{v-1, W} \leq t \right)\right],
	\end{align*}
	where $W \sim \sqrt{\frac{\phi^2}{1 - \phi^2}\chi_{v}^2} $ and $\chi_{v}^2$ is the chi-square distribution with degrees of freedom $v$.
\end{proposition}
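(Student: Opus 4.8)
The plan is to read off the limiting law of $T_u$ from Theorem~\ref{thm:key} via the continuous mapping theorem, and then identify that law as a (non-central) $t$ by an orthogonal-decomposition argument. First I would observe that Theorem~\ref{thm:key} in fact delivers the \emph{joint} convergence
\begin{align*}
\left( uCov_n^2(\mathbf X,\mathbf Y),\, uCov_n^2(\mathbf X,\mathbf X),\, uCov_n^2(\mathbf Y,\mathbf Y) \right) \overset{d}{\rightarrow} \frac{1}{v}\left( \mathbf c^T\mathbf M\mathbf d,\, \mathbf c^T\mathbf M\mathbf c,\, \mathbf d^T\mathbf M\mathbf d \right),
\end{align*}
since all three are continuous functionals of the single Gaussian limit $(\mathbf c,\mathbf d)$ supplied by Assumption~\ref{D3}. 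Writing $R := \mathbf c^T\mathbf M\mathbf d / \sqrt{(\mathbf c^T\mathbf M\mathbf c)(\mathbf d^T\mathbf M\mathbf d)}$, the map $(a,b,c)\mapsto \sqrt{v-1}\,(a/\sqrt{bc})/\sqrt{1-a^2/(bc)}$ is continuous on $\{b,c>0,\ a^2<bc\}$, so it remains only to identify the distribution of $\sqrt{v-1}\,R/\sqrt{1-R^2}$, the limit of $T_u$.

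Next I would exploit the projection structure of $\mathbf M$. Since $\mathbf M$ is a rank-$v$ projection, write $\mathbf M=\mathbf U\mathbf U^T$ with $\mathbf U\in\mathbb R^{\,n(n-1)/2\times v}$ having orthonormal columns, and set $\widetilde{\mathbf c}=\mathbf U^T\mathbf c,\ \widetilde{\mathbf d}=\mathbf U^T\mathbf d\in\mathbb R^{v}$. Because $\cov(\mathbf c)=\sigma_x^2\mathbf I$, $\cov(\mathbf d)=\sigma_y^2\mathbf I$ and $\cov(\mathbf c,\mathbf d)=\sigma_{xy}^2\mathbf I$, orthonormality $\mathbf U^T\mathbf U=\mathbf I_v$ yields that $(\widetilde{\mathbf c},\widetilde{\mathbf d})$ is Gaussian with $\cov(\widetilde{\mathbf c})=\sigma_x^2\mathbf I_v$, $\cov(\widetilde{\mathbf d})=\sigma_y^2\mathbf I_v$, $\cov(\widetilde{\mathbf c},\widetilde{\mathbf d})=\sigma_{xy}^2\mathbf I_v$; equivalently the $v$ coordinate pairs $(\widetilde c_k,\widetilde d_k)$ are i.i.d.\ bivariate normal with correlation $\phi=\sigma_{xy}^2/\sqrt{\sigma_x^2\sigma_y^2}$. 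Under this reduction $\mathbf c^T\mathbf M\mathbf d=\widetilde{\mathbf c}^T\widetilde{\mathbf d}$, $\mathbf c^T\mathbf M\mathbf c=|\widetilde{\mathbf c}|^2$, $\mathbf d^T\mathbf M\mathbf d=|\widetilde{\mathbf d}|^2$, so $R$ is exactly the cosine of the angle between the two $v$-dimensional Gaussian vectors $\widetilde{\mathbf c},\widetilde{\mathbf d}$, and strict Cauchy--Schwarz gives $R^2<1$ almost surely (for $v\ge2$, $|\phi|<1$), placing the limit in the continuity set with probability one.

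The crux is then a conditioning argument. Rescaling to $\mathbf a=\widetilde{\mathbf c}/\sigma_x$, $\mathbf b=\widetilde{\mathbf d}/\sigma_y$ (which leaves $R$ unchanged), write $\mathbf b=\phi\,\mathbf a+\sqrt{1-\phi^2}\,\bm{\epsilon}$ with $\bm{\epsilon}\sim N(\mathbf 0,\mathbf I_v)$ independent of $\mathbf a$. Conditioning on $\mathbf a$ and putting $\mathbf u=\mathbf a/|\mathbf a|$, the decomposition of $\mathbf b$ along $\mathbf u$ and its orthogonal complement $\mathbf b_\perp=\sqrt{1-\phi^2}\,\bm{\epsilon}_\perp$ gives $\mathbf u^T\mathbf b\sim N(\phi|\mathbf a|,\,1-\phi^2)$ and $|\mathbf b_\perp|^2\sim(1-\phi^2)\chi_{v-1}^2$, mutually independent. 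Using the identity $R/\sqrt{1-R^2}=\mathbf u^T\mathbf b/|\mathbf b_\perp|$, the factor $\sqrt{1-\phi^2}$ cancels and
\begin{align*}
\sqrt{v-1}\,\frac{R}{\sqrt{1-R^2}} \;\Big|\; \mathbf a \;\overset{d}{=}\; \frac{Z+W}{\sqrt{\chi_{v-1}^2/(v-1)}}, \qquad W=\frac{\phi\,|\mathbf a|}{\sqrt{1-\phi^2}},
\end{align*}
with $Z\sim N(0,1)$ independent of $\chi_{v-1}^2$, i.e.\ the conditional law is non-central $t_{v-1,W}$. Since $|\mathbf a|^2\sim\chi_v^2$, the noncentrality obeys $W\sim\sqrt{\tfrac{\phi^2}{1-\phi^2}\chi_v^2}$, and integrating over $\mathbf a$ yields $P_{H_A}(T_u\le t)\to E[P(t_{v-1,W}\le t)]$. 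Under $H_0$ we have $X\perp Y$, hence $dCov^2(x_i,y_j)=\cov^2(x_i,y_j)=0$ so $\sigma_{xy}^2=0$, $\phi=0$, $W\equiv0$, and the non-central $t$ collapses to the central $t_{v-1}$, proving the first claim.

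The step I expect to be most delicate is the very first one: justifying the \emph{joint} weak convergence of the three $uCov_n^2$ functionals and the continuity of the ratio map at the limit. One must confirm that the Gaussian limit avoids the degeneracies $\mathbf c^T\mathbf M\mathbf c=0$, $\mathbf d^T\mathbf M\mathbf d=0$ and $R^2=1$ with probability one (all guaranteed by nondegeneracy of the limiting law together with strict Cauchy--Schwarz when $v\ge2$ and $|\phi|<1$), so that the continuous mapping theorem applies without boundary pathologies; everything after this reduction is exact distribution theory.
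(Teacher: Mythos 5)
Your proof is correct and follows essentially the same route as the paper's: both arguments condition on one of the two jointly Gaussian limit vectors and use an orthogonal decomposition of the other along the conditioned direction to identify the conditional law of $T_u$ as a non-central $t_{v-1,W}$ with $W \overset{d}{=} \sqrt{\phi^2 \chi_v^2/(1-\phi^2)}$, which collapses to the central $t_{v-1}$ when $\phi=0$ under $H_0$. The only differences are presentational — you factor $\mathbf M=\mathbf U\mathbf U^T$ up front to reduce to $v$ orthonormal coordinates and condition on the $\mathbf c$-side, while the paper conditions on $\mathbf d$ and works with the rank-$(v-1)$ projection $\mathbf M - \mathbf M\mathbf d\mathbf d^T\mathbf M/(\mathbf d^T\mathbf M\mathbf d)$ directly in the ambient space.
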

\begin{remark}
	For the explicit form of $ E \left[ P \left( t_{v-1, W} \leq t \right)\right] $, see Lemma \ref{lem:exact} in the Appendix.
\end{remark}
Below we derive the large sample approximation of the limiting distribution $ E \left[ P \left( t_{v-1, W} \leq t \right)\right] $ under the local alternative hypothesis ($H_{A_l}$).
\begin{proposition}
	\label{prop:LarA}
	 Under $H_{A_l}$, if we allow $n$ to grow and $t$ is bounded as $n \rightarrow \infty$, $E \left[ P \left( t_{v-1, W} \leq t \right)\right]$ can be approximated as
	 \begin{align*}
	 E_{H_{A_l}} \left[ P \left( t_{v-1, W} \leq t \right)\right] = P \left( t_{v-1, \phi_{0}} \leq t  \right) + O \left(\frac{1}{v} \right),
	 \end{align*}
	 where $\phi_0 = \phi_{0,1}\mathbb{I}_{\{ k(x,y)= l(x,y)= |x-y| \}}  + \phi_{0,2}\mathbb{I}_{\{k(x,y)= l(x,y)= |x-y|^2\}}$. In particular, the result still holds if we replace $t$ with $t_{v-1}^{(\alpha)}$.
\end{proposition}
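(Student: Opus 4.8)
The plan is to take Proposition~\ref{prop:exactT} as the starting point and to carry out a second-order Taylor expansion of the noncentral-$t$ distribution function in its noncentrality parameter, exploiting the fact that under $H_{A_l}$ the random noncentrality $W$ concentrates at the fixed constant $\phi_0$ (which is $\phi_{0,1}>0$ or $\phi_{0,2}>0$ depending on the kernel, so the argument is identical in both cases). Writing $F_v(w):=P(t_{v-1,w}\le t)$, so that the target quantity is $E[F_v(W)]$ and the claimed limit is $F_v(\phi_0)$, Taylor's theorem with Lagrange remainder gives
\begin{align*}
E[F_v(W)] = F_v(\phi_0) + F_v'(\phi_0)\,\big(E[W]-\phi_0\big) + \tfrac12\,E\big[F_v''(\xi)(W-\phi_0)^2\big],
\end{align*}
with $\xi$ lying between $W$ and $\phi_0$. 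It then suffices to establish (a) that the first two derivatives of $F_v$ are bounded uniformly in $v$ (and in $t$), and (b) that $E[W]-\phi_0=O(1/v)$ and $E[(W-\phi_0)^2]=O(1/v)$; together these force each correction term to be $O(1/v)$.

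For (a) I would use the standard mixture representation of the noncentral $t$: letting $\Phi$ and $\varphi$ denote the standard normal distribution function and density, one has $F_v(w)=E\big[\Phi\big(t\sqrt{V/(v-1)}-w\big)\big]$ with $V\sim\chi^2_{v-1}$ independent of the underlying standard normal. Differentiating under the expectation (legitimate since $\varphi$ and $\varphi'$ are bounded, providing a constant dominating function) yields $F_v'(w)=-E[\varphi(t\sqrt{V/(v-1)}-w)]$ and $F_v''(w)=E[\varphi'(t\sqrt{V/(v-1)}-w)]$. Since $\sup_x\varphi(x)=(2\pi)^{-1/2}$ and $\sup_x|\varphi'(x)|=\sup_x|x|\varphi(x)=(2\pi e)^{-1/2}$, we obtain $|F_v'|\le(2\pi)^{-1/2}$ and $|F_v''|\le(2\pi e)^{-1/2}$ for every $w$, $t$ and $v$. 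This uniformity in the growing degrees of freedom $v-1$ is precisely what keeps the expansion controlled as $n$ (hence $v$) grows.

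For (b), substituting $\phi=\phi_0/\sqrt v$ into $W\sim\sqrt{\tfrac{\phi^2}{1-\phi^2}\chi^2_v}$ gives $W=\tfrac{\phi_0}{\sqrt{v-\phi_0^2}}\sqrt{\chi^2_v}$ (using $\phi_0>0$). From $E[\chi^2_v]=v$ we get $E[W^2]=\tfrac{\phi_0^2 v}{v-\phi_0^2}=\phi_0^2+O(1/v)$, and from the gamma-ratio asymptotics $E\big[\sqrt{\chi^2_v}\big]=\sqrt2\,\Gamma(\tfrac{v+1}{2})/\Gamma(\tfrac v2)=\sqrt v\,\big(1-\tfrac1{4v}+O(v^{-2})\big)$ we get $E[W]=\phi_0\big(1+O(1/v)\big)$, hence $E[W]-\phi_0=O(1/v)$. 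Combining, $E[(W-\phi_0)^2]=E[W^2]-2\phi_0E[W]+\phi_0^2=O(1/v)$ (the leading terms cancel to leave $\phi_0^2/(2v)+O(1/v^2)$). Inserting (a) and (b) into the displayed expansion yields $E[F_v(W)]=F_v(\phi_0)+O(1/v)=P(t_{v-1,\phi_0}\le t)+O(1/v)$. Finally, since $t_{v-1}^{(\alpha)}$ converges to the standard normal $(1-\alpha)$-quantile and is therefore bounded, the uniform-in-$t$ derivative bounds from (a) let the identical argument run with $t$ replaced by $t_{v-1}^{(\alpha)}$.

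The main obstacle is obtaining the sharp $O(1/v)$ rate rather than the crude $O(v^{-1/2})$. Indeed $W-\phi_0$ is itself only $O_p(v^{-1/2})$, so the naive bound $|E[W]-\phi_0|\le\sqrt{E[(W-\phi_0)^2]}$ gives merely $O(v^{-1/2})$, which, paired with the nonvanishing first derivative $F_v'(\phi_0)$, would degrade the conclusion. The crux is that the first-order bias $E[W]-\phi_0$ is genuinely $O(1/v)$, a cancellation that hinges on carrying the $\Gamma$-ratio expansion of $E[\sqrt{\chi^2_v}]$ to second order; verifying this, together with confirming that the derivative bounds are uniform in the growing degrees of freedom, is where the real care lies.
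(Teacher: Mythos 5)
Your proof is correct and follows essentially the same route as the paper: both Taylor-expand the normal-mixture representation $E\bigl[\Phi\bigl(t\sqrt{\chi^2_{v-1}/(v-1)}-u\bigr)\bigr]$ of the noncentral-$t$ c.d.f.\ in the noncentrality parameter around $\phi_0$, and both use the $\Gamma$-ratio asymptotics to establish the crucial cancellations $E[W]-\phi_0=O(1/v)$ and $E[(W-\phi_0)^2]=O(1/v)$. Your uniform second-derivative bound $|F_v''|\le (2\pi e)^{-1/2}$ is in fact a slight streamlining of the paper's $t$- and $u$-dependent bound $|g^{(2)}(u)|\le\sqrt{2}|t|+|u|$, which obliges the paper to additionally verify $E[W(W-\phi_0)^2]=O(1/v)$; otherwise the arguments coincide.
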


\subsubsection{Studentized Tests}
For testing the null, permutation test can be used to determine the critical value of the distance covariance ($dCov$), Hilbert-Schmidt covariance ($hCov$), marginal distance covariance ($mdCov$) and marginal Hilbert-Schmidt covariance ($mhCov$) respectively. If $dCov_n^2$, $hCov_n^2$, $mdCov_n^2$ or $mhCov_n^2$ is larger than the corresponding critical value, which can be determined by the empirical permutation distribution function, we reject the null. Alternatively, similar to the construction of $ T_u $, we transform each of $dCov_n^2, hCov_n^2, mdCov_n^2$ and $ mhCov_n^2  $ into a statistic that has asymptotic $t$-distribution under the null. Thus, instead of using permutation test, which can be quite computationally expensive, we can determine the critical value using this asymptotic $t$-distribution. For each $R \in \{ dCov, hCov, mdCov, mhCov \}$,  the studentized test statistic $T_R$ is defined as
\begin{align*}
T_{R} & = \sqrt{v-1} \frac{ R^*(\mathbf X, \mathbf Y)}{ \sqrt{1 - ( R^*(\mathbf X,\mathbf Y))^{2} } },
\end{align*}
where
\begin{align*}
R^*(\mathbf X,\mathbf Y) = \frac{ R_n^2(\mathbf X,\mathbf Y) }{\sqrt{ R_n^2(\mathbf X, \mathbf X) R_n^2(\mathbf Y,\mathbf Y)} }.
\end{align*}
The way to derive the asymptotic distribution of $T_{R}$ is to show that for each $R \in \{dCov, hCov, mdCov \} $, $R_n^2(\mathbf{X}, \mathbf{Y})$ and $ uCov^2_n(\mathbf{X}, \mathbf{Y}) $ are asymptotically equal up to an asymptotically constant factor, as shown below.
\begin{proposition}
	\label{prop:uni}
	Under Assumption \ref{D1},
	\begin{itemize}
		\item[(i)]  When $k(x,y)= l(x,y)= |x-y|^2$,
		\begin{align*}
		& dCov^2_n(\mathbf X,\mathbf Y) = \frac{1}{ 4 } \frac{\sqrt{pq}}{\tau} uCov^2_n(\mathbf X,\mathbf Y) +  {\cal R}'_n, \\
		& \tau \times hCov^2_n(\mathbf X, \mathbf Y)  =\frac{\sqrt{pq}}{ 4 \gamma_{\mathbf X}\gamma_{\mathbf Y} }  f^{(1)} \left( \frac{\tau_{X}}{\gamma_{\mathbf X}} \right)  g^{(1)} \left( \frac{\tau_{Y}}{ \gamma_{\mathbf Y}} \right)    uCov^2_n(\mathbf X, \mathbf Y) +  {\cal R}''_n,
		\end{align*}
		where ${\cal R}'_n, {\cal R}''_n$ are the remainder terms. Further suppose Assumption \ref{D2} holds. Then
		\begin{align*}
		& uCov^2_n(\mathbf X,\mathbf Y) = O_p (\tau a_p b_q), \\
		& {\cal R}'_n =   O_p(\tau a_p^2b_q+ \tau a_pb_q^2) = o_p (1), \\
		& {\cal R}''_n =   O_p(\tau a_p^2b_q+ \tau a_pb_q^2) = o_p (1).
		\end{align*}
		Thus the remainder term is of smaller order compared to the leading term and therefore is asymptotically  negligible.
		\item[(ii)] When $k(x,y)= l(x,y)= |x-y|$,
		\begin{align*}
		mdCov_n^2 (\mathbf X,\mathbf Y) = \sqrt{pq} \sqrt{\binom{n}{2}}  uCov^2_{n}(\mathbf X,\mathbf Y).
		\end{align*}
	\end{itemize}
\end{proposition}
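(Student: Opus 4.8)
The plan is to reduce all three claimed identities to the single building block $\cov_n^2(\mathcal X_i,\mathcal Y_j)$ already analyzed in Theorems \ref{thm:decomp} and \ref{thm:decompHsic}, and then to connect that quantity to the $\mathcal U$-centered inner products defining $uCov_n^2$. The crucial tool is the elementary observation that $\mathcal U$-centering annihilates any additive row-function, column-function, or constant: if $m_{st}=\alpha_s+\beta_t+c$ for $s\neq t$, then $\widetilde{\mathbf M}=\mathbf 0$, so that $(\widetilde{\mathbf A}\cdot\widetilde{\mathbf B})$ is unchanged when $\mathbf A$ is modified by such terms. I would first record this as a short lemma, since it follows directly from the centering formula in the Notations section.

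I would dispatch part (ii) first, as it is exact. When $k(x,y)=|x-y|$, the population double-centering defining $k_{st}(i)$ subtracts $E[|x_{si}-x_{ti}|\,|\,x_{si}]$ and $E[|x_{si}-x_{ti}|\,|\,x_{ti}]$ and adds back $E|x_{si}-x_{ti}|$; these are, respectively, a function of $s$ alone, of $t$ alone, and a constant. By the annihilation lemma they vanish under $\mathcal U$-centering, so $\widetilde{\mathbf K}(i)=\widetilde{\mathbf A}(i)$ and likewise $\widetilde{\mathbf L}(j)=\widetilde{\mathbf B}(j)$. Hence $(\widetilde{\mathbf K}(i)\cdot\widetilde{\mathbf L}(j))=dCov_n^2(\mathcal X_i,\mathcal Y_j)$ termwise, and summing over $i,j$ while inserting the $\sqrt{pq}$ normalization in $uCov_n^2$ gives $mdCov_n^2=\sqrt{pq}\sqrt{\binom n2}\,uCov_n^2$ with no remainder, exactly as stated.

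For part (i) I would take $k(x,y)=l(x,y)=|x-y|^2$ and compute the double-centered kernel explicitly: a direct expansion yields $k_{st}(i)=-2(x_{si}-\mu_i)(x_{ti}-\mu_i)$ with $\mu_i=E[x_{si}]$, and similarly for $l_{st}(j)$. The pieces $2\mu_i x_{si}$, $2\mu_i x_{ti}$, and $-2\mu_i^2$ are again row, column, and constant, so after $\mathcal U$-centering $\widetilde{\mathbf K}(i)$ coincides with the $\mathcal U$-centered version of the raw matrix $(-2x_{si}x_{ti})$; in particular $uCov_n^2$ is mean-free and computable from the data. The key step is then the exact identity $\cov_n^2(\mathcal X_i,\mathcal Y_j)=\tfrac14(\widetilde{\mathbf K}(i)\cdot\widetilde{\mathbf L}(j))$. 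I would establish it by noting that both sides are symmetric fourth-order $U$-statistics: $\cov_n^2$ has the stated kernel $h$, while $(\widetilde{\mathbf K}(i)\cdot\widetilde{\mathbf L}(j))$ admits a fourth-order $U$-statistic representation in the sense of \cite{szekely2014}. Comparing symmetrized kernels (equivalently, checking that both are unbiased for the same parameter and share the same degenerate structure) shows agreement up to the factor $4$, consistent with $E[k_{st}(i)l_{st}(j)]=4\cov^2(x_i,y_j)$ for $s\neq t$.

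Given this identity, $\sum_{i,j}\cov_n^2(\mathcal X_i,\mathcal Y_j)=\tfrac14\sum_{i,j}(\widetilde{\mathbf K}(i)\cdot\widetilde{\mathbf L}(j))=\tfrac{\sqrt{pq}}4\,uCov_n^2$. Substituting into Theorem \ref{thm:decomp} immediately gives $dCov_n^2=\tfrac14\tfrac{\sqrt{pq}}{\tau}uCov_n^2+\mathcal R_n'$, with $\mathcal R_n'$ the remainder there, and substituting into Theorem \ref{thm:decompHsic} together with the simplification $\tfrac{\tau_X}{\gamma_{\mathbf X}}\tfrac{\tau_Y}{\gamma_{\mathbf Y}}\tfrac1\tau=\tfrac1{\gamma_{\mathbf X}\gamma_{\mathbf Y}}$ (using $\tau=\tau_X\tau_Y$) produces the stated coefficient for $\tau\times hCov_n^2$ with remainder $\mathcal R_n''$. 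The order statements $uCov_n^2=O_p(\tau a_pb_q)$ and $\mathcal R_n',\mathcal R_n''=O_p(\tau a_p^2b_q+\tau a_pb_q^2)=o_p(1)$ are then inherited verbatim from parts (ii) of those theorems. The main obstacle is the exact fourth-order $U$-statistic identity $\cov_n^2=\tfrac14(\widetilde{\mathbf K}\cdot\widetilde{\mathbf L})$: the cancellation of the population means under $\mathcal U$-centering must be tracked carefully, and the kernel comparison, though elementary, is the one genuinely computational step.
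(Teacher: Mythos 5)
Your proposal follows essentially the same route as the paper: compute the double-centered kernels explicitly (for $|x-y|^2$ obtaining $k_{st}(i)=-2(x_{si}-\mu_i)(x_{ti}-\mu_i)$), use the fact that $\mathcal U$-centering annihilates row functions, column functions and constants so that only the raw product (resp.\ raw distance) matrices survive, identify $\sum_{i,j}(\widetilde{\mathbf K}(i)\cdot\widetilde{\mathbf L}(j))$ with $4\sum_{i,j}\cov_n^2(\mathcal X_i,\mathcal Y_j)$ (resp.\ with $\sum_{i,j}dCov_n^2(\mathcal X_i,\mathcal Y_j)$), and then read off the coefficients and remainder orders from Theorems \ref{thm:decomp} and \ref{thm:decompHsic}; the paper carries out the key identity by the explicit $U$-statistic expansion in its Equation \eqref{eq:defDcov}, which is exactly the ``symmetrized kernel comparison'' you defer. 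One caveat: your parenthetical that this comparison is ``equivalently'' checking that both statistics are unbiased for the same parameter with the same degenerate structure is not a valid substitute---equality of expectations does not force equality of kernels, so the termwise computation you name as the main obstacle genuinely must be performed.
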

As shown in Proposition \ref{prop:uni}, $k(x,y)= l(x,y)= |x-y|$ would correspond to the $mdCov$-based $t$-test
and $k(x,y)= l(x,y) = |x-y|^2$ would correspond to the$\{dCov, hCov\}$-based $t$-tests. Then, for each $R \in \{ dCov, hCov, mdCov \}$ the asymptotic distribution of $T_{R}$ is given in the following Corollary.
\begin{corollary} \label{cor:uni}
	If Assumptions \ref{D1}, \ref{D2} and \ref{D3} hold, for any fixed $t$ and each $R \in \{ dCov, hCov, mdCov \}$, we have	
	\begin{align*}
	 P_{H_0}(T_R \leq t ) & \rightarrow	 P(t_{v-1} \leq t), \\
	 P_{H_A}(T_R \leq t ) & \rightarrow E \left[ P \left( t_{v-1, W} \leq t \right)\right], \text{ where } W \sim \sqrt{\frac{\phi^2}{1 - \phi^2}  \chi_{v}^2}.
	\end{align*}
\end{corollary}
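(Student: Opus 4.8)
The plan is to reduce $T_R$ to the unified statistic $T_u$ and then invoke Proposition \ref{prop:exactT}, whose two displayed limits (under $H_0$ and under $H_A$) are precisely the conclusions sought. Since Proposition \ref{prop:exactT} already identifies the limiting law of $T_u$, it suffices to establish, for each $R \in \{dCov, hCov, mdCov\}$, that the sample squared correlation
\begin{align*}
R^*(\mathbf X, \mathbf Y) = \frac{R_n^2(\mathbf X, \mathbf Y)}{\sqrt{R_n^2(\mathbf X, \mathbf X)\, R_n^2(\mathbf Y, \mathbf Y)}}
\end{align*}
satisfies $R^*(\mathbf X, \mathbf Y) = uCor_n^2(\mathbf X, \mathbf Y) + o_p(1)$ as $p \wedge q \to \infty$, and then to transfer this identity through the continuous map $x \mapsto \sqrt{v-1}\, x / \sqrt{1-x^2}$.

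The first and central step is to show that the normalizing factors supplied by Proposition \ref{prop:uni} cancel in the ratio $R^*$. Writing that proposition schematically as $R_n^2(\mathbf X, \mathbf Y) = c_{xy}\, uCov_n^2(\mathbf X, \mathbf Y) + \mathcal R_n$, I would apply the same representation to the diagonal quantities $R_n^2(\mathbf X, \mathbf X)$ and $R_n^2(\mathbf Y, \mathbf Y)$, with coefficients $c_{xx}$ and $c_{yy}$ and remainders $\mathcal R_n^{XX}, \mathcal R_n^{YY}$. For $dCov$ one reads off $c_{xy} = \tfrac14 \sqrt{pq}/\tau$, $c_{xx} = \tfrac14 p/\tau_X^2$, $c_{yy} = \tfrac14 q / \tau_Y^2$, and a direct check using $\tau = \tau_X \tau_Y$ gives $c_{xy} = \sqrt{c_{xx} c_{yy}}$; the analogous bookkeeping for $hCov$ and the exact identity for $mdCov$ likewise yield $c_{xy} = \sqrt{c_{xx} c_{yy}}$. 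Dividing numerator and denominator of $R^*$ by $c_{xy}$ therefore removes all the factors and leaves
\begin{align*}
R^*(\mathbf X, \mathbf Y) = \frac{uCov_n^2(\mathbf X, \mathbf Y) + \mathcal R_n / c_{xy}}{\sqrt{\left(uCov_n^2(\mathbf X, \mathbf X) + \mathcal R_n^{XX}/c_{xx}\right)\left(uCov_n^2(\mathbf Y, \mathbf Y) + \mathcal R_n^{YY}/c_{yy}\right)}}.
\end{align*}

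To close the argument I would invoke Theorem \ref{thm:key}: the three quantities $uCov_n^2(\mathbf X, \mathbf Y)$, $uCov_n^2(\mathbf X, \mathbf X)$ and $uCov_n^2(\mathbf Y, \mathbf Y)$ converge jointly to $\tfrac1v \mathbf c^T \mathbf M \mathbf d$, $\tfrac1v \mathbf c^T \mathbf M \mathbf c \overset{d}{=} \tfrac{\sigma_x^2}{v}\chi_v^2$ and $\tfrac1v \mathbf d^T \mathbf M \mathbf d \overset{d}{=} \tfrac{\sigma_y^2}{v}\chi_v^2$, so the two diagonal limits are strictly positive and hence bounded away from $0$ in probability. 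The coefficients satisfy $c_{xy}, c_{xx}, c_{yy} \asymp_p 1$ (from $\tau_X \asymp \sqrt p$, $\tau_Y \asymp \sqrt q$ under Assumption \ref{D1}, and from Theorem \ref{thm:decompHsic} in the $hCov$ case), so the normalized remainders $\mathcal R_n/c_{xy}$, $\mathcal R_n^{XX}/c_{xx}$, $\mathcal R_n^{YY}/c_{yy}$ inherit the order $o_p(1)$ from Proposition \ref{prop:uni}. Because the diagonal limits are bounded away from $0$ and the ratio map is continuous there, these $o_p(1)$ perturbations give $R^*(\mathbf X, \mathbf Y) = uCor_n^2(\mathbf X, \mathbf Y) + o_p(1)$, with exact equality for $mdCov$. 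Finally, Cauchy--Schwarz for the semi-inner product induced by the projection $\mathbf M$ places the limiting correlation $\mathbf c^T \mathbf M \mathbf d / \sqrt{(\mathbf c^T \mathbf M \mathbf c)(\mathbf d^T \mathbf M \mathbf d)}$ in $(-1,1)$ almost surely, so the map $x \mapsto \sqrt{v-1}\, x/\sqrt{1-x^2}$ is continuous at the limit; the continuous mapping theorem together with Slutsky then force $T_R$ and $T_u$ to share a limit, and Proposition \ref{prop:exactT} delivers both displayed conclusions.

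I expect the main obstacle to be the coefficient cancellation for $hCov$. For $dCov$ and $mdCov$ the factors are deterministic and the cancellation is immediate, but for $hCov$ the coefficient $c_{xy}$ carries the data-dependent bandwidths $\gamma_{\mathbf X}, \gamma_{\mathbf Y}$ (medians of pairwise distances) as well as the derivative factors $f^{(1)}, g^{(1)}$. The cancellation succeeds because the \emph{same} bandwidth $\gamma_{\mathbf X}$ enters both $hCov_n^2(\mathbf X, \mathbf Y)$ and $hCov_n^2(\mathbf X, \mathbf X)$ (and likewise $\gamma_{\mathbf Y}$ in the $\mathbf Y$ factors), and because $f^{(1)}g^{(1)} = |f^{(1)}|\,|g^{(1)}|$ for the kernels considered, so that the random factors divide out exactly and one needs no control of $\gamma_{\mathbf X}, \gamma_{\mathbf Y}$ beyond the order statement $c_{xy} \asymp_p 1$ from Theorem \ref{thm:decompHsic}.
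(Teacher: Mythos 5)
Your proposal is correct and follows essentially the same route as the paper: reduce $T_R$ to $T_u$ by showing the normalizing factors from Proposition \ref{prop:uni} cancel in the studentized ratio, invoke Theorem \ref{thm:key} for the joint limit of the three $uCov_n^2$ quantities, and conclude via the argument of Proposition \ref{prop:exactT}. The paper states this in two lines; you have simply filled in the bookkeeping (including the sign observation $f^{(1)}g^{(1)}=|f^{(1)}|\,|g^{(1)}|$ needed for the $hCov$ cancellation), and that bookkeeping checks out.
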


After knowing the asymptotic distribution of $T_R$ under the null, i.e. $t$-distribution with degrees of freedom $v-1$, we can set critical value as $t_{v-1}^{(\alpha)}$.
Then, from Proposition \ref{prop:exactT}, under the alternative, the asymptotic power of testing the null can be written as a function of $\phi$, i.e.,
\begin{align*}
\label{power2}
Power_{n}(\phi) : =  E \left[ P \left( t_{v-1, W} > t_{v-1}^{\alpha} \right)\right],
\end{align*}
and under $H_{A_l}$, if we allow $n$  to grow
\begin{align*}
Power_{\infty}(\phi_0) : = \lim\limits_{n \rightarrow \infty} Power_{n}\left( \frac{\phi_0}{\sqrt{v}} \right) = P \left( t_{v-1, \phi_{0}} > t_{v-1}^{(\alpha)}  \right).
\end{align*}
We then plot $Power_n(\phi)$ under different combinations of $\alpha$ and $n$, which are shown in Figure \ref{fig1}. It can be seen from Figure \ref{fig1} that larger $\phi$ results in better power and $\phi=0$ corresponds to trivial power. Next, we can actually bound the ratio of $\phi_1$ and $\phi_2$ for standard normal random variables.
\begin{figure}[h]
	\begin{subfigure}[b]{0.5\linewidth}
		\centering
		\includegraphics[width=0.9\linewidth]{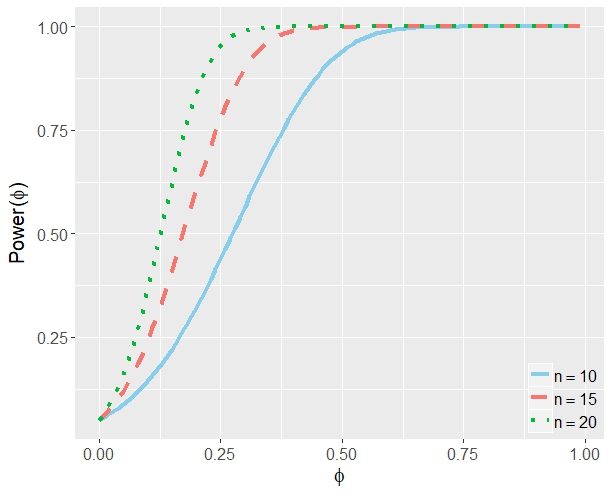}
		\caption{$\alpha=0.05$}
		\label{fig7:a}
	\end{subfigure}
	\begin{subfigure}[b]{0.5\linewidth}
		\centering
		\includegraphics[width=0.9\linewidth]{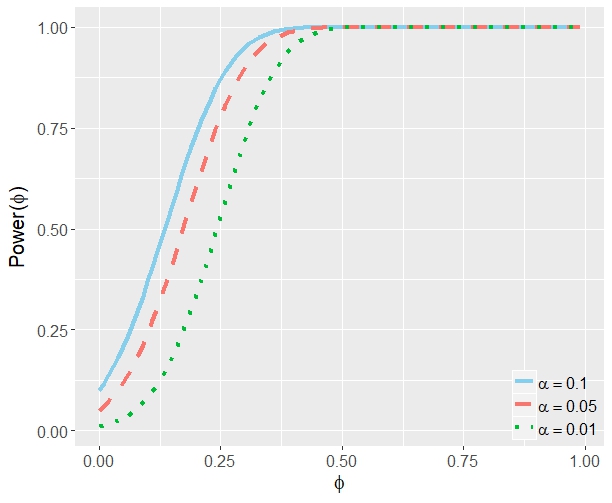}
		\caption{$n=15$}
		\label{fig7:b}
	\end{subfigure}
	\caption{Plot of $Power_n(\phi)$ as a function of $\phi$ under different combinations of $\alpha$ and $n$.}
	\label{fig1}
\end{figure}

\begin{proposition}
	\label{signal:normal}
	Suppose that
	\begin{align*} \left(
	\begin{array}{c}
	X \\
	Y
	\end{array} \right)  \overset{d}{=} N \left(\mathbf 0, \left(
	\begin{array}{cc}
	\mathbf{I}_{p} & \bm{\Sigma}_{XY}  \\
	\bm{\Sigma}_{XY}^T  & \mathbf{I}_{q}
	\end{array} \right)
	\right),
	\end{align*}
	where $\bm{\Sigma}_{XY} = cov(X,Y)$. We have
	\begin{align*}
	0.89^{2} \phi_2 \leq \phi_{1} \leq \phi_2.
	\end{align*}
\end{proposition}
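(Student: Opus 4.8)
The plan is to reduce the statement to a pointwise inequality for the bivariate-normal distance correlation and then lift it to the aggregated quantities $\phi_1,\phi_2$ by a weighted-average argument. First I would exploit the Gaussian structure: since $\cov(X)=\mathbf I_p$ and $\cov(Y)=\mathbf I_q$, every coordinate is standard normal and, for $i\ne j$, $x_i$ and $x_j$ are independent (jointly Gaussian and uncorrelated), so $dCov^2(x_i,x_j)=\cov^2(x_i,x_j)=0$; likewise within $Y$. Writing $\rho_{ij}:=(\bm{\Sigma}_{XY})_{ij}=\cov(x_i,y_j)\in[-1,1]$ and $V^2:=dCov^2(x_i,x_i)$ for the common distance variance of a standard normal, the denominators collapse to $\sum_{i,j}\cov^2(x_i,x_j)=p$, $\sum_{i,j}\cov^2(y_i,y_j)=q$, $\sum_{i,j}dCov^2(x_i,x_j)=pV^2$ and $\sum_{i,j}dCov^2(y_i,y_j)=qV^2$. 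Using the closed form of the bivariate-normal distance correlation from \cite{szekely2007}, namely $dCor^2(\rho)=F(\rho)/F(1)$ with $F(\rho)=\rho\arcsin\rho+\sqrt{1-\rho^2}-\rho\arcsin(\rho/2)-\sqrt{4-\rho^2}+1$ and $F(1)=1+\pi/3-\sqrt3$, together with $dCov^2(x_i,y_j)=V^2\,dCor^2(\rho_{ij})$, both ratios simplify so that $\phi_1=\lim_{p,q}(F(1))^{-1}(pq)^{-1/2}\sum_{i,j}F(\rho_{ij})$ and $\phi_2=\lim_{p,q}(pq)^{-1/2}\sum_{i,j}\rho_{ij}^2$.

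Consequently $\phi_1/\phi_2$ is the limit of the weighted average $\big(\sum_{i,j}g(\rho_{ij})\rho_{ij}^2\big)/\big(\sum_{i,j}\rho_{ij}^2\big)$, where $g(\rho):=dCor^2(\rho)/\rho^2=F(\rho)/(F(1)\rho^2)$ and the weights $\rho_{ij}^2$ are nonnegative. Hence it suffices to bound $g$ pointwise on $[-1,1]$: if $m\le g(\rho)\le M$ for all $\rho$, then the same bounds hold for every finite-$(p,q)$ weighted average and therefore in the limit (when $\phi_2=0$ the bound $g\le1$ forces $\phi_1=0$ and the claim is trivial). Since $F$ and $g$ are even, I would work on $(0,1]$ and set $h(\rho):=F(\rho)/\rho^2=F(1)g(\rho)$. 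The goal becomes showing that $h$ is strictly increasing on $(0,1]$, which yields $1/4=h(0^+)\le h(\rho)\le h(1)=F(1)$, i.e. $1/(4F(1))\le g(\rho)\le1$.

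The hard part is this monotonicity. The key simplification is that the square-root terms cancel in the derivative, leaving $F'(\rho)=\arcsin\rho-\arcsin(\rho/2)$; then $h'(\rho)>0$ is equivalent to $G(\rho):=\rho F'(\rho)-2F(\rho)>0$. Since $G(0)=0$, it is enough to show $G'>0$ on $(0,1)$. I compute $G'(\rho)=\rho F''(\rho)-F'(\rho)=\rho\,w(\rho)-\int_0^\rho w(t)\,dt$, where $w(t):=F''(t)=\tfrac{1}{\sqrt{1-t^2}}-\tfrac{1}{\sqrt{4-t^2}}$ and I use $F'(\rho)=\int_0^\rho w$. A short computation gives $w'(t)=t\big[(1-t^2)^{-3/2}-(4-t^2)^{-3/2}\big]>0$ on $(0,1)$, so $w$ is strictly increasing and $\int_0^\rho w(t)\,dt<\rho\,w(\rho)$, whence $G'>0$. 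This establishes the monotonicity of $h$.

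Finally I would verify the numerics: $h(0^+)=1/4$ follows from the expansion $F(\rho)=\rho^2/4+7\rho^4/192+\cdots$, and $F(1)=1+\pi/3-\sqrt3\approx0.3151$ gives $\min_\rho g=1/(4F(1))\approx0.7933>0.7921=0.89^2$ while $\max_\rho g=1$. Combining, $0.89^2<g(\rho)\le1$ for all admissible $\rho$, and passing to the limit in the weighted average yields $0.89^2\,\phi_2\le\phi_1\le\phi_2$. I expect the monotonicity step — specifically the reduction to the increasing function $w$ via the cancellation in $F'$ — to be the crux; everything else is bookkeeping with the Gaussian covariance structure and the closed-form distance correlation.
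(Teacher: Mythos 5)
Your proof is correct, and at the top level it follows the same route as the paper: reduce everything to the per-pair ratio $dCor^2(x_i,y_j)/\rho_{ij}^2$ for a bivariate normal pair, observe that the denominators of $\phi_1$ and $\phi_2$ collapse to diagonal terms because distinct coordinates are independent, and then bound the resulting weighted average of that ratio. The difference is in how the crux inequality $0.89^2 < dCor^2(\rho)/\rho^2 \le 1$ is obtained. The paper simply invokes Theorem 7 of \cite{szekely2007}, which states the closed form $dCor^2(\rho)=F(\rho)/F(1)$ together with $dCor\le|\rho|$ and $\inf_{\rho\ne0} dCor/|\rho| = \bigl(2\sqrt{1+\pi/3-\sqrt3}\,\bigr)^{-1}\approx 0.8907$, and then only has to do the bookkeeping with the normalizing constants. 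You instead re-derive that infimum from scratch: the cancellation $F'(\rho)=\arcsin\rho-\arcsin(\rho/2)$, the positivity of $G'(\rho)=\rho F''(\rho)-F'(\rho)$ via the monotonicity of $w=F''$, the conclusion that $h(\rho)=F(\rho)/\rho^2$ increases from $1/4$ to $F(1)$, and the numerical check $1/(4F(1))\approx0.7933>0.89^2$. I verified these computations and they are right (including the Taylor expansion giving $h(0^+)=1/4$ and the handling of the degenerate case $\phi_2=0$). What your version buys is a self-contained proof of the monotonicity of $\rho\mapsto dCor^2(\rho)/\rho^2$, which is strictly more information than the two-sided bound the paper cites; what it costs is length, since the cited theorem already contains exactly the two endpoints needed.
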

It will be shown later that $\phi_1$ corresponds to the $mdCov$-based test, whereas $\phi_{2}$ corresponds to the $dCov$ and $hCov$-based tests. Thus considering models described in Proposition \ref{signal:normal}, we expect a power loss for the $mdCov$-based test comparing to the $dCov$ and $hCov$-based tests. On the other hand, since $ \phi_{1}$ is bounded below by $ 0.89^{2} \phi_2 $, the power loss is expected to be moderate.

Using Corollary \ref{cor:uni}, we can theoretically compare the power of these $t$-tests under different cases and the results are summarized in the following table
\begin{align*}
\begin{array}{c|cc}
Power &T_{mdCov}&  T_{dCov}, T_{hCov}  \\ \hline
\text{under } H_{A} & Power_n(\phi_1) & Power_n(\phi_2) \\
\text{under } H_{A_{l}}, \text{ allow }n \text{ growing to infinity} & Power_{\infty}(\phi_{0,1}) &  Power_{\infty}(\phi_{0,2}) \\
\text{under } H_{A_{s}} & Power_n(\phi_1) & \alpha \\
\end{array}
\end{align*}
For the studentized version of $mhCov$, if we consider the bandwidth parameters to be fixed constants, then we can use the unified approach to get the limiting $t$-distribution of the transformed $mhCov_n^2$. On the other hand, if $\gamma_{\mathcal X_{i}}$ and $\gamma_{\mathcal Y_{j}}$ are treated to be median of sample distance along each dimension and are thus random, we encounter technical difficulties to derive the limiting distribution, as in this case the kernelized pair-wise distance along each dimension are correlated with each other. This is due to the choice of the bandwidth parameter and the high dimensional approximation used for $hCov_n^2$ can not be directly applied, since $\gamma_{\mathcal X_{i}}$ and $\gamma_{\mathcal Y_{j}}$ are calculated component-wisely. Nevertheless, we shall examine the testing efficiency using $t$-distribution approximation when the bandwidth parameters are chosen to be the median of sample distance in simulation.

\section{High Dimension Medium Sample Size}
Another type of asymptotics closely related to HDLSS is the high dimension medium sample size (HDMSS) setting [\cite{aoshima2018survey}], where $p \wedge q \rightarrow \infty$ and $n \rightarrow \infty$ at a slower rate comparing to $p,q$. The HDMSS setting has been studied by \cite{fan2008sure} and \cite{yata2010effective}, among others.

From the previous sections, we know that the distance/Hilbert-Schmidt covariance can only detect linear dependencies between pair-wise components when $p \wedge q \rightarrow \infty$ and $n$ fixed. In this section, we show that this surprising phenomenon still holds under the high dimension medium sample size setting. Consequently, a unified approach is used to show that $T_{R}$ converges in distribution to standard norml under the null hypothesis, but the technical details of handling the leading term and controlling the remainder are totally different from the fixed $n$ case.

\subsection{Distance Covariance and Variants}
We first state the following assumption which can be seen as an extension of Assumption \ref{D2}.
%
\begin{myassumption}{D4}\label{D4} Denote
	$
	E[L_{X}(X, X')^2] = \alpha_{p}^2, E[L_{Y}(Y, Y')^2] = \beta_{q}^2$, $ E[L_{X}(X, X')^4] = \gamma_{p}^2 \text{ and } E[L_{Y}(Y, Y')^4] = \lambda_{q}^2,$
	where $\alpha_{p}, \beta_{q}, \gamma_{p}, \lambda_{q}$ are sequences of numbers such that as $ n\wedge p \wedge q \rightarrow \infty$
	\begin{align*}
	\begin{array}{c}
	n \alpha_{p} = o(1) \text{, }  n \beta_{q} = o(1),  \\
	\tau_{X}^2 (\alpha_{p}\gamma_{p} + \gamma_{p}^2 ) = o(1), \tau_{Y}^2(  \beta_{q}\lambda_{q} + \lambda_{q}^2)  = o(1), \tau (\alpha_{p} \lambda_{q} + \gamma_{p} \beta_{q} +  \gamma_{p} \lambda_{q})  = o(1).
	\end{array}
	\end{align*}
\end{myassumption}

\begin{remark}
\label{rem:proof2}
For the $m$-dependence structure, i.e., $ x_{i} \perp x_{j}$ if $|i-j|>m$ and $ y_{i'} \perp y_{j'}$ if $ |i' - j'| >m'$, where $\sup_{i} E(x_{i}^{8}) < \infty $ and $ \sup_{i} E(y_{i}^{8}) < \infty $, we can show that $\alpha_p = O(\sqrt{m/p})$, $\beta_{q} = O(\sqrt{ m'/q})$, $\gamma_{p}=O(m/p)$ and $\lambda_q=O(m'/q)$.
Thus, Assumption \ref{D4} holds under the $m$-dependence model if $n$ and $m, m'$ satisfies
\begin{equation*}
\begin{array}{c}
n^2 m = o(p),~~n^2 m' = o(q), \\
m^3=o(p), ~~ m'^3 = o(q), ~~m'm^2=o(p), ~~ m m'^{2}=o(q).
\end{array}
\label{eqcondii}
\end{equation*}
\end{remark}
The following theorem shows that the decomposition property \eqref{eq:decomp} for distance covariance still holds under high dimension medium sample size setting.
\begin{thm}
	\label{thm:decomp2}
	Under Assumption \ref{D1}, we can show that	
	\begin{itemize}
		\item[(i)]
	\begin{equation}
	\label{eq:decomp2}
	dCov^2_n(\mathbf{X},\mathbf{Y})  =  \frac{1}{\tau} \sum_{i=1}^p \sum_{j=1}^q \cov_n^2 \left(\mathcal X_i, \mathcal Y_j \right)  + {\cal R}_n.
	\end{equation}
	Here $\cov_n^2$ is defined the same as in Theorem \ref{thm:decomp} and $\mathcal{R}_{n}$ is the remainder term.
	\item[(ii)] Further suppose Assumption \ref{D4} holds. Then we have
	\begin{align*}
	& \frac{1}{\tau} \sum_{i=1}^p \sum_{j=1}^q \cov_n^2 \left(\mathcal X_i, \mathcal Y_j \right) = O_p( \tau \alpha_{p} \beta_{q} ), \\
	& {\cal R}_n =  O_p (\tau \alpha_{p} \lambda_{q} + \tau \gamma_{p} \beta_{q} + \tau \gamma_{p} \lambda_{q}) = o_p(1).
	\end{align*}
	\end{itemize}	
\end{thm}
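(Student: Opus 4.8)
The plan is to mirror the decomposition already established in Theorem~\ref{thm:decomp}, but to track the moment-based quantities $\alpha_p,\beta_q,\gamma_p,\lambda_q$ carefully enough that the bounds survive when $n$ is allowed to diverge. For part~(i), I would start from the $\mathcal{U}$-centered representation $dCov_n^2(\mathbf X,\mathbf Y)=(\widetilde{\mathbf A}\cdot\widetilde{\mathbf B})$ and plug in the Taylor expansions \eqref{l2:decomp}--\eqref{l2:decomp2} of $a_{st}/\tau_X$ and $b_{st}/\tau_Y$ from Proposition~\ref{prop:taylor}. Because $\mathcal{U}$-centering annihilates the constant term $1$ in each expansion, the product $(\widetilde{\mathbf A}\cdot\widetilde{\mathbf B})$ splits into the leading cross term built from $\tfrac12 L_X$ and $\tfrac12 L_Y$, plus terms involving at least one remainder $R_X$ or $R_Y$. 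The leading term, after identifying $\sum_{s\neq t}\widetilde{(L_X)}_{st}\widetilde{(L_Y)}_{st}$ with the double sum of componentwise fourth-order $\mathcal{U}$-statistics, reproduces exactly $\tfrac1\tau\sum_{i,j}\cov_n^2(\mathcal X_i,\mathcal Y_j)$; everything else is collected into $\mathcal R_n$. This algebraic identity is purely combinatorial and does \emph{not} use Assumption~\ref{D4}, so part~(i) holds under \ref{D1} alone, exactly as claimed.

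For part~(ii) the order of the leading term follows from a direct second-moment computation: $E[\cov_n^2(\mathcal X_i,\mathcal Y_j)]=\cov^2(x_i,y_j)$ and the variance of each $\mathcal{U}$-statistic is controlled through the weak dependence implicit in the bounds $E[L_X^2]=\alpha_p^2$, $E[L_Y^2]=\beta_q^2$. Summing over $i,j$ and dividing by $\tau$ gives $O_p(\tau\alpha_p\beta_q)$, which matches the statement once one notes $\tau\asymp\sqrt{pq}$. The remainder $\mathcal R_n$ collects products in which at least one factor is a remainder term; using $R_X=O_p(L_X^2)$ (valid once $L_X=o_p(1)$, which $n\alpha_p=o(1)$ guarantees via Markov's inequality) these products reduce to cross-moments such as $E[L_X^2 L_Y]$, $E[L_X L_Y^2]$ and $E[L_X^2 L_Y^2]$. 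Bounding these by Cauchy--Schwarz in terms of $\alpha_p,\gamma_p$ on the $X$ side and $\beta_q,\lambda_q$ on the $Y$ side, and summing over the $pq$ component pairs with the factor $\tau^{-1}$ restored, yields the three terms $\tau\alpha_p\lambda_q$, $\tau\gamma_p\beta_q$ and $\tau\gamma_p\lambda_q$; Assumption~\ref{D4} forces each to be $o(1)$.

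The genuinely new difficulty relative to Theorem~\ref{thm:decomp} is that the number of summands in each $\mathcal{U}$-statistic, and hence the number of cross terms generated by $\mathcal{U}$-centering, now grows with $n$, so the crude fixed-$n$ bookkeeping no longer suffices. The main obstacle will therefore be showing that the variance of $\tfrac1\tau\sum_{i,j}\cov_n^2(\mathcal X_i,\mathcal Y_j)$ and the expectations of the remainder products do not pick up extra powers of $n$ that would overwhelm the $o(1)$ targets. I expect to handle this by exploiting the Hoeffding/Hájek orthogonal decomposition of each degree-$4$ $\mathcal{U}$-statistic: the variance is a weighted sum $\sum_{c=1}^4 \binom{4}{c}\binom{n-4}{4-c}\binom{n}{4}^{-1}\zeta_c$ of the kernel's conditional variances $\zeta_c$, and the $n$-dependence of these combinatorial coefficients is exactly what the scalings $n\alpha_p=o(1)$, $\tau_X^2(\alpha_p\gamma_p+\gamma_p^2)=o(1)$ and the cross conditions in \ref{D4} are designed to absorb. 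Once the conditional variances $\zeta_c$ are tied back to moments of $L_X,L_Y$ through the factor-model-type weak dependence, the proof reduces to verifying that every resulting term is dominated by one of the five $o(1)$ expressions in Assumption~\ref{D4}, and the conclusion $\mathcal R_n=o_p(1)$ follows from Markov's inequality applied to its first absolute moment.
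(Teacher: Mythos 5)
Your part (i) and your identification of where the difficulty lies (the number of pairs grows with $n$) are both on target, and your bound for the leading term is fine, though the Hoeffding-decomposition machinery is heavier than needed: the paper simply writes $(\widetilde{\mathbf L}_X\cdot\widetilde{\mathbf L}_Y)$ out via the three-sum representation of the $\mathcal U$-centered inner product, applies Cauchy--Schwarz to the pair averages, and uses Markov on the first moments $E[L_X^2]=\alpha_p^2$, $E[L_Y^2]=\beta_q^2$.

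There is, however, a genuine gap in your treatment of the remainder. You propose to use the per-pair bound $R_X(X_s,X_t)=O_p(L_X(X_s,X_t)^2)$ and then control $\mathcal R_n$ by Markov applied to its first absolute moment, reducing everything to cross-moments like $E[L_X^2L_Y^2]$. Neither step is valid as stated. First, a per-pair $O_p$ bound does not hold uniformly over the $\binom{n}{2}$ pairs when $n\to\infty$; you cannot sum $O(n^2)$ such stochastic bounds without extra work. Second, and more seriously, the Taylor remainder has the form $R_X=\bigl(\int_0^1\int_0^1 v\,f^{(2)}(uvL_X)\,du\,dv\bigr)L_X^2$ with $f^{(2)}(t)=-\tfrac14(1+t)^{-3/2}$, whose integrand blows up as $L_X\downarrow -1$ (i.e.\ when $|X_s-X_t|$ is small); hence there is no unconditional inequality $|R_X|\le C L_X^2$ and no direct moment bound $E|R_X|\lesssim E[L_X^2]$. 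The paper resolves both problems at once by a truncation argument that your plan omits: it introduces the events $B_{\mathbf X}=\{\min_{s<t}|X_s-X_t|^2/\tau_X^2\le 1/2 \text{ or } \max_{s<t}|X_s-X_t|^2/\tau_X^2\ge 3/2\}$ (and $B_{\mathbf Y}$ likewise), shows $P(B_{\mathbf X})\le 4n^2E[L_X^2]=4n^2\alpha_p^2=o(1)$ by a union bound over pairs --- this is precisely where $n\alpha_p=o(1)$ is consumed --- and then, on the complement, obtains the \emph{deterministic, simultaneous-over-all-pairs} inequality $|R_X(X_s,X_t)|\le\tfrac{\sqrt2}{4}L_X(X_s,X_t)^2$. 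Only after this truncation do Cauchy--Schwarz and Markov on the pair averages of $L_X^4$, $L_Y^4$ (with first moments $\gamma_p^2$, $\lambda_q^2$) yield $\tau(\widetilde{\mathbf R}_X\cdot\widetilde{\mathbf R}_Y)=O_p(\tau\gamma_p\lambda_q)$ and the analogous bounds for the cross terms; the contributions on the bad events are $o_p(1)$ because those events have vanishing probability. Without this step your argument cannot close.
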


Similarly, as shown in the following, $hCov$ also has the decomposition property under HDMSS.
\begin{thm}
	\label{thm:decompHsic2}
	Under Assumption D1, we have
	\begin{itemize}
	\item [(i)]
	\begin{multline}
	\label{eq:decompHSIC2}
	\tau \times hCov^2_{n}(\mathbf X, \mathbf Y) \\ =  f^{(1)} \left( \frac{\tau_{X}}{\gamma_{\mathbf X}} \right)  g^{(1)} \left( \frac{\tau_{Y}}{ \gamma_{\mathbf Y}} \right) \frac{\tau_{X}}{\gamma_{\mathbf X}}  \frac{\tau_{Y}}{\gamma_{ \mathbf Y}}  \frac{1}{\tau} \sum\limits_{i=1}^{p} \sum\limits_{j=1}^{q} \text{cov}_{n}^2 (\mathcal X_{i}, \mathcal Y_{j}) + \mathcal{R}_{n},
	\end{multline}
	where $ \text{cov}_{n}^2 $ is defined the same as in Theorem \ref{thm:decomp} and $\mathcal{R}_{n}$ is the remainder term.	
	\item[(ii)] Further suppose Assumption \ref{D4} holds. Then
	\begin{align*}
	&  f^{(1)} \left( \frac{\tau_{X}}{\gamma_{\mathbf X}} \right)  g^{(1)} \left( \frac{\tau_{Y}}{ \gamma_{\mathbf Y}} \right)   \frac{\tau_{X}}{\gamma_{\mathbf X}} \frac{\tau_{Y}}{\gamma_{ \mathbf Y}}  \asymp_p 1, \\
	& \frac{1}{\tau} \sum_{i=1}^p \sum_{j=1}^q \cov_n^2 \left(\mathcal X_i, \mathcal Y_j \right) = O_p( \tau \alpha_{p} \beta_{q} ), \\
	& {\cal R}_n =  O_p (\tau \alpha_{p} \lambda_{q} + \tau \gamma_{p} \beta_{q} + \tau \gamma_{p} \lambda_{q}) = o_p(1).
	\end{align*}
	\end{itemize}
\end{thm}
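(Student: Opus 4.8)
The plan is to mirror the HDLSS argument for Theorem \ref{thm:decompHsic}, replacing every single-pair $O_p$-rate (governed by Assumption \ref{D2}) with a genuine moment bound (governed by Assumption \ref{D4}), and then to check that each estimate survives the passage to growing $n$, i.e.\ that it holds \emph{uniformly} over the $\binom{n}{2}$ pairs $(s,t)$. Part (i) is a pure expansion identity requiring only Assumption \ref{D1} and is literally the statement of Theorem \ref{thm:decompHsic}(i): writing $r_{st}=f(|X_s-X_t|/\gamma_{\mathbf X})$ and using $|X_s-X_t|/\gamma_{\mathbf X}=a_0\,(1+\tfrac12 L_X(X_s,X_t)+R_X(X_s,X_t))$ with $a_0:=\tau_X/\gamma_{\mathbf X}$ from Proposition \ref{prop:taylor}, a second-order Taylor expansion of $f$ about $a_0$ together with $\mathcal U$-centering (which annihilates the constant $f(a_0)$) leaves a leading term $\tfrac12 f^{(1)}(a_0)a_0\,\widetilde{L_X}$; doing the same for $\mathbf H$ with $g$ and $Y$, and using $\tfrac14(\widetilde{L_X}\cdot\widetilde{L_Y})=\tau^{-2}\sum_{i,j}\cov_n^2(\mathcal X_i,\mathcal Y_j)$, yields \eqref{eq:decompHSIC2} with $\mathcal R_n$ collecting the cross terms.

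For the prefactor estimate in part (ii) the first step is median concentration. Since $\alpha_p^2=E[L_X(X,X')^2]$ and $n\alpha_p=o(1)$, a moment bound gives $E[\max_{s<t}L_X(X_s,X_t)^2]\le\binom{n}{2}\alpha_p^2=O((n\alpha_p)^2)=o(1)$, so $\max_{s<t}|L_X(X_s,X_t)|=o_p(1)$, and likewise for $Y$. Hence every $|X_s-X_t|/\tau_X=1+o_p(1)$ uniformly, the sample median obeys $\gamma_{\mathbf X}/\tau_X\to_p 1$, i.e.\ $a_0\to_p 1$ and $\tau_Y/\gamma_{\mathbf Y}\to_p 1$. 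As $f^{(1)},g^{(1)}$ are continuous near $1$ with $f^{(1)}(1),g^{(1)}(1)\neq 0$ for the kernels considered, the factor $f^{(1)}(\tau_X/\gamma_{\mathbf X})g^{(1)}(\tau_Y/\gamma_{\mathbf Y})\tfrac{\tau_X}{\gamma_{\mathbf X}}\tfrac{\tau_Y}{\gamma_{\mathbf Y}}\to_p f^{(1)}(1)g^{(1)}(1)$, a nonzero constant, giving $\asymp_p 1$. The same uniform control also forces all intermediate Taylor points into a neighbourhood of $1$, validating the second-order expansion simultaneously across all pairs.

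The order of the leading term is identical to, hence inherited verbatim from, Theorem \ref{thm:decomp2}(ii), since $\cov_n^2$ and Assumption \ref{D4} are the same there; this gives $\tfrac1\tau\sum_{i,j}\cov_n^2(\mathcal X_i,\mathcal Y_j)=O_p(\tau\alpha_p\beta_q)$. For the remainder I would write $\tilde r_{st}=\tfrac12 c_X\widetilde{L_X}_{st}+\widetilde{\rho_X}_{st}$ with $c_X=f^{(1)}(a_0)a_0$, where $\rho_X$ gathers the $R_X$-part of the linear term and the quadratic Taylor piece, each of order $L_X^2$, so that $|\rho_X|\le C\,L_X^2$ using the deterministic bound $|R_X|\le C L_X^2$ (valid because $|X-X'|/\tau_X=\sqrt{1+L_X}$ and $|\sqrt{1+u}-1-\tfrac12 u|\le Cu^2$ on $[-1,\infty)$) and the global boundedness of $f^{(2)},g^{(2)}$. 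Expanding $(\widetilde{\mathbf R}\cdot\widetilde{\mathbf H})$ and subtracting the main$\times$main piece leaves $\mathcal R_n=\tau[\tfrac12 c_X(\widetilde{L_X}\cdot\widetilde{\rho_Y})+\tfrac12 c_Y(\widetilde{\rho_X}\cdot\widetilde{L_Y})+(\widetilde{\rho_X}\cdot\widetilde{\rho_Y})]$. Because $\mathcal U$-centering is an $L^2$-contraction with an $n$-free constant, $E[\widetilde{L_X}_{st}^2]\lesssim\alpha_p^2$ and $E[\widetilde{\rho_X}_{st}^2]\lesssim E[L_X^4]=\gamma_p^2$, and similarly $\beta_q^2,\lambda_q^2$ for $Y$; applying Cauchy--Schwarz to each average $\tfrac1{n(n-3)}\sum_{s\neq t}$ (whose expectation already controls it by Markov, since the summands are nonnegative), multiplying by $\tau$ and the $\asymp_p 1$ factor, yields $\mathcal R_n=O_p(\tau\alpha_p\lambda_q+\tau\gamma_p\beta_q+\tau\gamma_p\lambda_q)=o_p(1)$ by Assumption \ref{D4}.

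\textbf{The main obstacle} is precisely making these estimates robust to growing $n$. The genuinely new ingredient relative to the fixed-$n$ case is that all moment constants must be tracked to be independent of $n$ as they pass through $\mathcal U$-centering and the Cauchy--Schwarz averaging over a growing number of terms, while the median/bandwidth concentration must be controlled through a maximum over the $\binom{n}{2}$ pairs; this is exactly where the strengthened rate $n\alpha_p=o(1)$ and the four joint conditions of Assumption \ref{D4} (rather than the simpler Assumption \ref{D2}) are needed, and where the deterministic quadratic control $|R_X|\le C L_X^2$, together with the global boundedness of $f^{(2)},g^{(2)}$, is essential so that the second-order remainder admits an honest $E[\rho_X^2]\lesssim\gamma_p^2$ bound rather than a mere in-probability rate.
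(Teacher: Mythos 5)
Your proposal is correct and follows the same overall architecture as the paper's proof: part (i) is the algebraic identity already established for Theorem \ref{thm:decompHsic}, the bandwidth concentration $\tau_X/\gamma_{\mathbf X}\to_p 1$ is obtained by a union bound over the $\binom{n}{2}$ pairs combined with Chebyshev and the strengthened rate $n\alpha_p=o(1)$, the leading term's order is inherited from Theorem \ref{thm:decomp2}, and the remainder is controlled by Cauchy--Schwarz on the $\mathcal U$-centered averages together with Markov applied to the fourth-moment quantities $\gamma_p^2=E[L_X^4]$ and $\lambda_q^2=E[L_Y^4]$. The one place where you diverge is in how the Taylor remainders are dominated by $L_X^2$ and $L_Y^2$ uniformly over pairs: the paper cannot assert a global deterministic bound $|R_X|\le CL_X^2$ because its integral-form remainder carries the factor $f^{(2)}(t)=-\tfrac14(1+t)^{-3/2}$, which blows up as $L_X\to-1$, so it truncates on the events $B_{\mathbf X},B_{\mathbf Y}$ (on which $|L_X|<1/2$, making all integrands bounded) and shows the complementary events have vanishing probability via $n^2E[L_X^2]=o(1)$. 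You instead invoke the elementary global inequality $|\sqrt{1+u}-1-\tfrac12 u|\le Cu^2$ on $[-1,\infty)$ and the boundedness of $f^{(2)},g^{(2)}$ on $[0,\infty)$ (which suffices, since the intermediate Taylor points $a_0(1-uv)+uv\,a_0\sqrt{1+L_X}$ are nonnegative for the Gaussian and Laplacian kernels). Both devices are valid; yours removes the event-splitting bookkeeping at the cost of a random $O_p(1)$ constant through $a_0=\tau_X/\gamma_{\mathbf X}$, which your median-concentration step already controls.
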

From Equations \eqref{eq:decomp2} and \eqref{eq:decompHSIC2}, we can see that under the HDMSS setting, it is still true that distance/Hilbert-Schmidt covariance can only detect the linear dependence between the components of $X$ and $Y$.

\subsection{Studentized Test Statistics}
Similar to Section \ref{student}, we provide a unified approach to analyze the studentized $dCov, hCov, mdCov$. Since now the sample size is growing, the element-wise argument used to prove the results in Section \ref{student} will no longer work. Inspired by \cite{zhang2018conditional} and \cite{yao2017testing}, we derive the asymptotic distribution by constructing a martingale sequence and using martingale CLT.

\subsubsection{Unified Approach}
For notational convenience, we first define the following metrics,
\begin{align*}
U(X_{s}, X_{t}) :=  \frac{1}{\sqrt{p}}\sum^{p}_{i=1} k_{st}(i), \quad  V(Y_{s},  Y_{t}) := \frac{1}{\sqrt{q}}\sum^{q}_{i=1} l_{st}(i),
\end{align*}
where $k_{st}(i)$ and $l_{st}(i)$ are defined in Section \ref{sec:uni}. To show that the studentized test statistic converges to standard normal, we essentially use the martingale CLT [\cite{hall2014martingale}] and the following assumptions are used to guarantee the conditions in martingale CLT.
\begin{myassumption}{D5}\label{D5}
	\begin{align}	
	& \frac{E \left[  U(X, X')^4  \right]}{ \sqrt{n} (E[ U(X, X')^2 ])^2 } \rightarrow 0, \label{eq:d51} \\
	& \frac{E \left[  U(X, X') U(X', X'')U(X'', X''')U(X''',X ) \right] }{ (E[ U(X, X')^2 ])^2 } \rightarrow 0, \label{eq:d52}
	\end{align}
	and similar assumptions hold for $Y$.
\end{myassumption}

\begin{remark}
When $k(x,y)= l(x,y)= |x-y|$, Assumption \ref{D5} has been studied in Propositions 2.1 and 2.2 of \cite{zhang2018conditional}.
\end{remark}

\begin{remark}
	\label{rem:D7}
	When $k(x,y)= l(x,y)= |x-y|^2$, Equations \eqref{eq:d51} and \eqref{eq:d52} can be simplified to
	\begin{align*}
	&\frac{\sum\limits_{i,j,r,w=1}^p E^2 \left[ (x_{i} - E[x_{i}]) (x_{j} - E[x_{j}]) (x_{r} - E[x_{r}]) (x_{w} - E[x_{w}]) \right] }{ \sqrt{n} Tr^2(\bm{\Sigma}_{X}^2) } \rightarrow 0, \\
	 & \frac{Tr ( \bm{\Sigma}_{X}^4)  }{   Tr^2(\bm{\Sigma}_{X}^2)  } \rightarrow 0, \quad \text{where } \bm{\Sigma}_{X} = cov (X, X).
	\end{align*}
	Notice that $Tr(\bm{\Sigma}_{X}^2) = \sum_{i=1}^{p} \sum_{j=1}^{p} cov^2(x_{i}, x_{j})$. Consider the $m$-dependence model in Remark \ref{rem:proof2}. Assuming $\sup_{i} E(x_{i}^{4}) < \infty $, we have $ Tr ( \bm{\Sigma}_{X}^4) =O(m^3p) $ and
	$$
	 \sum_{i,j,r,w=1}^p E^2 \left[ (x_{i} - E[x_{i}]) (x_{j} - E[x_{j}]) (x_{r} - E[x_{r}]) (x_{w} - E[x_{w}]) \right]  = \\ O(m^2p^2).
	$$
	Consequently, it can be seen that the $m$-dependence model in Remark \ref{rem:proof2} also satisfies Equations \eqref{eq:d51} and \eqref{eq:d52} by controlling the orders of $n, m, m'$.
\end{remark}

Then, we can show that the normalized $uCov_{n}^{2}(\mathbf{X}, \mathbf{Y})$ converges to standard normal distribution under the high dimension medium sample size regime.
\begin{thm}
	\label{thm:key2}
	 Let $n \wedge p \wedge q \rightarrow \infty$. Under $H_{0}$ and Assumption \ref{D5}, we have
		\begin{align*}
		\sqrt{\binom{n}{2}} \frac{uCov_{n}^{2}(\mathbf{X}, \mathbf{Y})}{\mathcal{S}} \overset{d}{\rightarrow} N(0,1), \text{ where } \mathcal{S}^{2} = E[ U(X, X')^{2}] E[ V(Y,  Y')^{2} ].
		\end{align*}
\end{thm}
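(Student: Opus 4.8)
The plan is to reduce $uCov_n^2(\mathbf X,\mathbf Y)$ to a standardized degenerate second-order $\mathcal U$-statistic in the paired variables $Z_s=(X_s,Y_s)$ and then invoke the martingale CLT. First I would rewrite the statistic in terms of the aggregated kernels $U$ and $V$. Since $\mathcal U$-centering acts linearly on the matrix entries and $U(X_s,X_t)=p^{-1/2}\sum_i k_{st}(i)$, interchanging the order of summation yields
\begin{align*}
uCov_n^2(\mathbf X,\mathbf Y)=\frac{1}{n(n-3)}\sum_{s\neq t}\widetilde U_{st}\widetilde V_{st}=(\widetilde{\mathbf U}\cdot\widetilde{\mathbf V}),
\end{align*}
where $\mathbf U=(U(X_s,X_t))_{s,t=1}^n$, $\mathbf V=(V(Y_s,Y_t))_{s,t=1}^n$, and $\widetilde{\mathbf U},\widetilde{\mathbf V}$ are their $\mathcal U$-centered versions. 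By the population double-centering built into $k_{st}(i)$ and the independence of distinct samples (the mechanism behind Proposition \ref{prop:ind}), one gets $E[U(X_s,X_t)\mid X_s]=0$ and $E[V(Y_s,Y_t)\mid Y_s]=0$, so both kernels are degenerate.

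Next I would show that the $\mathcal U$-centering corrections are asymptotically negligible. Expanding $\widetilde U_{st}\widetilde V_{st}$ produces the leading term $U_{st}V_{st}$ together with terms built from the row/column averages $\tfrac{1}{n-2}\sum_v U_{sv}$, etc. Under $H_0$ the vectors $X$ and $Y$ are independent, so $\mathbf U$ and $\mathbf V$ are independent; combined with degeneracy, each correction is an average of mean-zero terms whose order is controlled below the leading sum by the moment bounds of Assumption \ref{D5}. This reduces the statistic, up to a $1+o_p(1)$ factor, to $\tfrac{1}{v}\sum_{s<t}\psi(Z_s,Z_t)$ with $v=n(n-3)/2$ and $\psi(Z_s,Z_t):=U(X_s,X_t)V(Y_s,Y_t)$. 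Under $H_0$ the kernel $\psi$ is itself degenerate, $E[\psi(Z_s,Z_t)\mid Z_s]=E[U(X_s,X_t)\mid X_s]\,E[V(Y_s,Y_t)\mid Y_s]=0$, with $E[\psi^2]=E[U(X,X')^2]E[V(Y,Y')^2]=\mathcal S^2$; degeneracy kills the cross-covariances, so $\var\!\big(\sum_{s<t}\psi(Z_s,Z_t)\big)=\binom{n}{2}\mathcal S^2$. Since $\binom{n}{2}/v=(n-1)/(n-3)\to1$, the quantity $\sqrt{\binom{n}{2}}\,uCov_n^2/\mathcal S$ is, up to $1+o_p(1)$, the standardized sum $\big(\sqrt{\binom{n}{2}}\,\mathcal S\big)^{-1}\sum_{s<t}\psi(Z_s,Z_t)$, which has mean $0$ and variance $1$; this pins down the normalization in the statement.

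To obtain asymptotic normality I would construct the martingale $S_n=\sum_{t=2}^n D_t$ with $D_t=\sum_{s<t}\psi(Z_s,Z_t)$ and filtration $\mathcal F_t=\sigma(Z_1,\ldots,Z_t)$; degeneracy gives $E[D_t\mid\mathcal F_{t-1}]=0$, so $\{D_t\}$ is a martingale difference sequence, and then apply the martingale CLT \cite{hall2014martingale}. This requires (i) the conditional-variance condition $\sum_t E[D_t^2\mid\mathcal F_{t-1}]/\var(S_n)\overset{p}{\to}1$ and (ii) a conditional Lindeberg condition, which I would check through $\sum_t E[D_t^4]/\var(S_n)^2\to0$. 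The two parts of Assumption \ref{D5} are tailored to exactly these conditions: the fourth-moment part $E[U^4]/(\sqrt n(E[U^2])^2)\to0$ and its $Y$-analogue multiply to give $E[\psi^4]/(n\mathcal S^4)\to0$, controlling the Lindeberg term; and the cyclic part equals $E[G_U(X,X')^2]/(E[U^2])^2\to0$ with $G_U(x,x')=E[U(x,X'')U(x',X'')]$, so that $G_\psi=G_UG_V$ controls the variance of $\sum_t E[D_t^2\mid\mathcal F_{t-1}]$ and delivers condition (i).

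The main obstacle I anticipate is twofold. First, rigorously showing that the $\mathcal U$-centering corrections are negligible \emph{uniformly} as $n\wedge p\wedge q\to\infty$: because all three indices diverge simultaneously, one cannot treat the centering as a fixed-$n$ perturbation as in Theorem \ref{thm:key}, and the bookkeeping of the row/column-average terms must be carried out carefully against the moment rates of Assumption \ref{D5}. Second, verifying the conditional-variance condition, which amounts to concentrating $\sum_t E[D_t^2\mid\mathcal F_{t-1}]$ around $\var(S_n)$; this is precisely where the cyclic fourth-moment condition in Assumption \ref{D5} is indispensable, and establishing it is the technically heaviest step.
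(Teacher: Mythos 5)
Your proposal is correct and follows essentially the same route as the paper: Lemma \ref{lem:hoeff} performs exactly your reduction of $uCov_n^2(\mathbf X,\mathbf Y)$ to the degenerate kernel $H(Z_s,Z_t)=U(X_s,X_t)V(Y_s,Y_t)$ with a centering remainder that is $o_p(1)$ after multiplication by $\sqrt{\binom{n}{2}}$ (citing Proposition 2.1 of \cite{yao2017testing}), and the normal limit of $\bigl(\sqrt{\binom{n}{2}}\,\mathcal S\bigr)^{-1}\sum_{s<t}H(Z_s,Z_t)$ is obtained by the same martingale construction via Theorem 2.1 of \cite{zhang2018conditional}, with the two parts of Assumption \ref{D5} playing precisely the roles (conditional Lindeberg and conditional-variance concentration) you assign them. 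The only difference is that the paper outsources these two technical steps to the cited results, whereas you sketch them directly.
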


Consequently, we have the following result.
\begin{proposition}\label{prop:exactT2}
	 Let $n \wedge p \wedge q \rightarrow \infty$. Under $H_{0}$ and Assumption \ref{D5}, we have
		\begin{align*}
		T_u \overset{d}{\rightarrow} N(0,1).
		\end{align*}
\end{proposition}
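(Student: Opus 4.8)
The plan is to write $T_u$ as a product of three factors whose joint limit follows from Theorem \ref{thm:key2} together with a consistency statement for the ``self'' quantities $uCov_n^2(\mathbf X, \mathbf X)$ and $uCov_n^2(\mathbf Y, \mathbf Y)$, and then to conclude by Slutsky's theorem. Writing $v = n(n-3)/2$, I would start from the definitions
\begin{align*}
T_u = \sqrt{v-1}\,\frac{uCor_n^2(\mathbf X, \mathbf Y)}{\sqrt{1 - (uCor_n^2(\mathbf X,\mathbf Y))^2}}, \qquad uCor_n^2(\mathbf X,\mathbf Y) = \frac{uCov_n^2(\mathbf X, \mathbf Y)}{\sqrt{uCov_n^2(\mathbf X,\mathbf X)\, uCov_n^2(\mathbf Y,\mathbf Y)}},
\end{align*}
and factor the leading part as
\begin{align*}
\sqrt{v-1}\,uCor_n^2(\mathbf X, \mathbf Y) = \frac{\sqrt{v-1}}{\sqrt{\binom{n}{2}}}\cdot\frac{\sqrt{\binom{n}{2}}\,uCov_n^2(\mathbf X, \mathbf Y)}{\mathcal S}\cdot\frac{\mathcal S}{\sqrt{uCov_n^2(\mathbf X, \mathbf X)\, uCov_n^2(\mathbf Y, \mathbf Y)}}.
\end{align*}
The first factor converges to $1$ deterministically because $(v-1)/\binom{n}{2} = (n^2-3n-2)/(n^2-n)\rightarrow 1$, and the second converges in distribution to $N(0,1)$ by Theorem \ref{thm:key2}.

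The crux is the third factor. Since $\mathcal S^2 = E[U(X,X')^2]\,E[V(Y,Y')^2]$ factorizes, it suffices to prove $uCov_n^2(\mathbf X, \mathbf X)/E[U(X,X')^2]\overset{p}{\rightarrow}1$ and the analogous statement for $Y$, and then multiply. I would establish the first by a second-moment argument. By linearity of the $\mathcal U$-centering one rewrites $uCov_n^2(\mathbf X, \mathbf X) = v^{-1}\sum_{s<t}\tilde U(X_s, X_t)^2$, where $\tilde U$ is the $\mathcal U$-centered array built from $U(X_s,X_t) = p^{-1/2}\sum_{i} k_{st}(i)$. Because $E[k_{st}(i)\mid x_{si}]=0$, the kernel is \emph{degenerate}, i.e. $E[U(X_s,X_t)\mid X_s]=0$; hence the row/column centering corrections are of relative order $n^{-1/2}$ and, up to negligible error, reduce the problem to the raw average $v^{-1}\sum_{s<t}U(X_s,X_t)^2$, whose mean is $E[U(X,X')^2](1+o(1))$. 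For the variance of this average I would split $\cov(U_{st}^2, U_{uw}^2)$ according to the overlap of $\{s,t\}$ and $\{u,w\}$ (with $U_{st}:=U(X_s,X_t)$): disjoint index pairs are independent and contribute $0$; the $\binom{n}{2}$ equal pairs contribute $O\!\big(E[U_{12}^4]/(n^2(E[U_{12}^2])^2)\big)$; and the $O(n^3)$ pairs sharing one index contribute $\var\!\big(E[U(X,X')^2\mid X]\big)/(n(E[U_{12}^2])^2)$, which by Cauchy--Schwarz is at most $E[U_{12}^4]/(n(E[U_{12}^2])^2)$. Both nonzero contributions are $o(1)$ by \eqref{eq:d51}, which in fact supplies an extra factor $n^{-1/2}$ of room, so Chebyshev's inequality yields the claimed convergence (and the four-cycle condition \eqref{eq:d52} is needed only for the numerator CLT of Theorem \ref{thm:key2}).

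With the three factors in hand, Slutsky's theorem gives $\sqrt{v-1}\,uCor_n^2(\mathbf X, \mathbf Y)\overset{d}{\rightarrow}N(0,1)$; in particular $uCor_n^2(\mathbf X, \mathbf Y) = O_p(v^{-1/2}) = o_p(1)$, so $(1 - (uCor_n^2(\mathbf X,\mathbf Y))^2)^{-1/2}\overset{p}{\rightarrow}1$, and a final application of Slutsky's theorem delivers $T_u\overset{d}{\rightarrow}N(0,1)$.

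I expect the main obstacle to be the consistency step for the self-normalizers, and within it the careful treatment of the $\mathcal U$-centering: one must verify that the row, column, and grand-mean corrections in $\tilde U$ are genuinely lower order (exploiting the degeneracy $E[U(X_s,X_t)\mid X_s]=0$ and the diagonal conventions) before the clean overlap-based variance bound applies, and that the resulting moment quantities match exactly the expression appearing in \eqref{eq:d51}. Everything downstream is routine bookkeeping with Slutsky's theorem.
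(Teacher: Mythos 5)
Your proof is correct and follows essentially the same route as the paper's: Theorem \ref{thm:key2} for the numerator, ratio consistency of $uCov_n^2(\mathbf X,\mathbf X)/E[U(X,X')^2]$ and $uCov_n^2(\mathbf Y,\mathbf Y)/E[V(Y,Y')^2]$ for the denominator, and Slutsky's theorem (the paper silently absorbs the $\sqrt{v-1}$ versus $\sqrt{\binom{n}{2}}$ discrepancy that you handle explicitly). The only substantive difference is that where the paper obtains the denominator consistency by citing the proof of Theorem 2.2 in \cite{zhang2018conditional}, you sketch a self-contained second-moment argument; your overlap-based variance bound and your observation that only \eqref{eq:d51}, not the four-cycle condition \eqref{eq:d52}, is needed for this step are both consistent with that reference.
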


\subsubsection{Studentized Tests}
The following result shows that as $ n \wedge p \wedge q \rightarrow \infty $, scaled $dCov, hCov$ and $mdCov$ are all equal to $uCov$ up to an asymptotically constant factor.
\begin{proposition}
	\label{prop:uni2}
	Under Assumption \ref{D1},
	\begin{itemize}
		\item[(i)]  When $k(x,y)= l(x,y)= |x-y|^2$,
		\begin{align*}
		& dCov^2_n(\mathbf X,\mathbf Y) = \frac{1}{ 4 } \frac{\sqrt{pq}}{\tau} uCov^2_n(\mathbf X,\mathbf Y) +  {\cal R}'_n, \\
		& \tau \times hCov^2_n(\mathbf X, \mathbf Y)  =\frac{\sqrt{pq}}{ 4\gamma_{\mathbf X}\gamma_{\mathbf Y} }  f^{(1)} \left( \frac{\tau_{X}}{\gamma_{\mathbf X}} \right)  g^{(1)} \left( \frac{\tau_{Y}}{ \gamma_{\mathbf Y}} \right)  uCov^2_n(\mathbf X, \mathbf Y) +  {\cal R}''_n,
		\end{align*}
		where ${\cal R}'_n, {\cal R}''_n$ are the remainder terms.
			Further suppose Assumption \ref{D4} holds. Then
			\begin{align*}
			& uCov^2_n(\mathbf X,\mathbf Y) = O_p( \tau \alpha_{p} \beta_{q} ), \\
			& {\cal R}_n' =  O_p (\tau \alpha_{p} \lambda_{q} + \tau \gamma_{p} \beta_{q} + \tau \gamma_{p} \lambda_{q}) = o_p(1), \\
			& {\cal R}_n'' =  O_p (\tau \alpha_{p} \lambda_{q} + \tau \gamma_{p} \beta_{q} + \tau \gamma_{p} \lambda_{q}) = o_p(1).
			\end{align*}
			
		\item[(ii)] When $k(x,y)= l(x,y)= |x-y|$,
		\begin{align*}
		mdCov_n^2 (\mathbf X,\mathbf Y) = \sqrt{pq} \sqrt{\binom{n}{2}}  uCov^2_{n}(\mathbf X,\mathbf Y).
		\end{align*}
	\end{itemize}
\end{proposition}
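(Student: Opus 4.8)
The plan is to separate the two ingredients of the statement: the displayed equalities are exact, finite-sample algebraic identities that hold under Assumption \ref{D1} alone and are therefore regime-independent (they coincide with the decompositions already recorded in Proposition \ref{prop:uni}); only the stochastic orders of $uCov_n^2$ and of the remainders ${\cal R}_n'$, ${\cal R}_n''$ are specific to the HDMSS regime, and these will be supplied by the decomposition Theorems \ref{thm:decomp2} and \ref{thm:decompHsic2} through Assumption \ref{D4}. Accordingly I would first establish the two kernel-specific ``bridge'' identities linking $uCov_n^2$ to the component-wise statistics, then assemble parts (i) and (ii) by substitution, and finally read off the orders.

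For the bridges I would use the invariance of $\mathcal U$-centering under additive row, column, and constant effects. Because the double-centered kernel $k_{st}(i)$ differs from $k(x_{si},x_{ti})$ only by such effects, $\widetilde{\mathbf K}(i)$ coincides with the $\mathcal U$-centered raw-kernel matrix. When $k(x,y)=l(x,y)=|x-y|$ this gives $(\widetilde{\mathbf K}(i)\cdot\widetilde{\mathbf L}(j))=dCov_n^2(\mathcal X_i,\mathcal Y_j)$, so that $uCov_n^2=\frac{1}{\sqrt{pq}}\sum_{i,j}dCov_n^2(\mathcal X_i,\mathcal Y_j)$ and part (ii) is immediate and exact from the definition of $mdCov_n^2$. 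When $k(x,y)=l(x,y)=|x-y|^2$ I would compute the double centering explicitly to get $k_{st}(i)=-2(x_{si}-\E x_i)(x_{ti}-\E x_i)$, and then, writing the $\mathcal U$-centered inner product as a fourth-order $\mathcal U$-statistic, match its symmetrized kernel with the kernel $h$ of Theorem \ref{thm:decomp} to obtain the exact identity $(\widetilde{\mathbf K}(i)\cdot\widetilde{\mathbf L}(j))=4\,\cov_n^2(\mathcal X_i,\mathcal Y_j)$, whence $\sum_{i,j}\cov_n^2(\mathcal X_i,\mathcal Y_j)=\tfrac{\sqrt{pq}}{4}\,uCov_n^2$.

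Part (i) then follows by substituting these bridges into the HDMSS decompositions. Inserting $\frac1\tau\sum_{i,j}\cov_n^2=\frac{\sqrt{pq}}{4\tau}uCov_n^2$ into Theorem \ref{thm:decomp2}(i) gives $dCov_n^2=\frac14\frac{\sqrt{pq}}{\tau}uCov_n^2+{\cal R}_n'$ with ${\cal R}_n'$ equal to the remainder of that theorem, and inserting it into Theorem \ref{thm:decompHsic2}(i) together with the simplification $\frac{\tau_X}{\gamma_{\mathbf X}}\frac{\tau_Y}{\gamma_{\mathbf Y}}\frac1\tau=\frac1{\gamma_{\mathbf X}\gamma_{\mathbf Y}}$ (valid since $\tau=\tau_X\tau_Y$) yields the stated $\tau\times hCov_n^2$ identity with ${\cal R}_n''$ its remainder. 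For the orders, Theorem \ref{thm:decomp2}(ii) gives $\frac1\tau\sum_{i,j}\cov_n^2=O_p(\tau\alpha_p\beta_q)$, and since $\sqrt{pq}\asymp\tau$ under Assumption \ref{D1} we deduce $uCov_n^2=\frac{4\tau}{\sqrt{pq}}\,\frac1\tau\sum_{i,j}\cov_n^2=O_p(\tau\alpha_p\beta_q)$; the remainders ${\cal R}_n'$ and ${\cal R}_n''$ inherit the bound $O_p(\tau\alpha_p\lambda_q+\tau\gamma_p\beta_q+\tau\gamma_p\lambda_q)=o_p(1)$ directly from Theorems \ref{thm:decomp2}(ii) and \ref{thm:decompHsic2}(ii).

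The one step carrying real content is the combinatorial matching that turns the $\mathcal U$-centered inner product into the explicit fourth-order $\mathcal U$-statistic $\cov_n^2$ in the quadratic case (the factor-$4$ bridge), and I expect this bookkeeping to be the main obstacle; however, it is a finite, regime-free identity, and it is precisely the one already underpinning Proposition \ref{prop:uni}, so it transfers to the present setting unchanged. Everything else is substitution plus the orders supplied by Assumption \ref{D4} via the two HDMSS decomposition theorems, so the argument requires no new probabilistic input.
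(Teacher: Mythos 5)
Your proposal is correct and follows essentially the same route as the paper: the paper's proof simply states that the results follow from Theorems \ref{thm:decomp2} and \ref{thm:decompHsic2} combined with the algebraic identities already established in the proof of Proposition \ref{prop:uni}, which are exactly your two kernel-specific ``bridge'' identities ($\,(\widetilde{\mathbf K}(i)\cdot\widetilde{\mathbf L}(j))=4\,\cov_n^2(\mathcal X_i,\mathcal Y_j)$ for the quadratic kernel and $=dCov_n^2(\mathcal X_i,\mathcal Y_j)$ for the absolute-value kernel). Your observation that these bridges are exact, regime-free identities and that only the remainder orders require the HDMSS Assumption \ref{D4} is precisely how the paper organizes the argument.
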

Finally, by adopting a unified approach, we have the following Corollary.
\begin{corollary} \label{cor:uni2}
		Let $n \wedge p \wedge q \rightarrow \infty$. Under $H_{0}$ and Assumption \ref{D5}, we have
		\begin{itemize}
			\item[(i)]  \begin{align*}
			T_{mdCov} \overset{d}{\rightarrow} N(0,1).
			\end{align*}
			\item[(ii)] Further suppose Assumption \ref{D4} and
			\begin{align}
			\label{eq:last}
			\frac{n}{ \sqrt{  \frac{1}{p}  Tr ( \bm{\Sigma}_{X}^2 )  \frac{1}{q}  Tr ( \bm{\Sigma}_{Y}^2 ) } } \tau (\alpha_{p} \lambda_{q} + \gamma_{p}  \beta_{q} + \gamma_{p} \lambda_{q} ) = o(1).
			\end{align}
			Then, for each $R \in \{ dCov, hCov \}$, we have
			\begin{align*}
			T_{R} \overset{d}{\rightarrow} N(0,1).
			\end{align*}
		\end{itemize}		
\end{corollary}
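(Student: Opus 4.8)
The plan is to reduce each studentized statistic $T_R$ to the unified statistic $T_u$, whose null limit is $N(0,1)$ by Proposition \ref{prop:exactT2}, and then to show the difference vanishes after multiplication by the growing normalization $\sqrt{v-1}\asymp n$. Since under $H_0$ both $R^*$ and $uCor_n^2$ tend to $0$, the factors $\sqrt{1-(R^*)^2}$ and $\sqrt{1-(uCor_n^2)^2}$ converge to $1$, so it suffices to establish $\sqrt{v-1}\,(R^*-uCor_n^2)=o_p(1)$; then $T_R=T_u+o_p(1)$ and Slutsky gives the claim.

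For part (i) the reduction is exact: Proposition \ref{prop:uni2}(ii) yields $mdCov_n^2(\mathbf X,\mathbf Y)=\sqrt{pq}\sqrt{\binom{n}{2}}\,uCov_n^2(\mathbf X,\mathbf Y)$, and the same identity on the self-terms (dimensions $p,p$ and $q,q$) makes the prefactor $\sqrt{pq}\sqrt{\binom{n}{2}}$ cancel in the ratio $mdCov^*$, so $mdCov^*=uCor_n^2$ identically, $T_{mdCov}=T_u$, and (i) is immediate.

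For part (ii) I would start from $R_n^2=c\cdot uCov_n^2+\mathcal R_n$ in Proposition \ref{prop:uni2}(i), with $c_{pq}=\tfrac14\sqrt{pq}/\tau$ for $dCov$ and the $f^{(1)},g^{(1)},\gamma_{\mathbf X},\gamma_{\mathbf Y}$-factor for $hCov$. The algebraic engine is $c_{pq}=\sqrt{c_{pp}c_{qq}}$ (the $hCov$ factor obeys the same relation, $f^{(1)},g^{(1)}$ squaring correctly on the self-terms), so that expanding the ratio about its remainder-free value produces $uCor_n^2$ as leading term plus a cross correction $\propto\mathcal R_n(\mathbf X,\mathbf Y)/\big(c_{pq}\sqrt{uCov_n^2(\mathbf X,\mathbf X)uCov_n^2(\mathbf Y,\mathbf Y)}\big)$ and self corrections $\propto uCor_n^2\cdot\mathcal R_n(\mathbf X,\mathbf X)/\big(c_{pp}uCov_n^2(\mathbf X,\mathbf X)\big)$ together with its $Y$-analogue.

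The \emph{main obstacle} is bounding these corrections once multiplied by $\sqrt{v-1}\asymp n$, which is precisely where HDMSS departs from the fixed-$n$ analysis. Computing $k_{st}(i)=-2(x_{si}-\E x_i)(x_{ti}-\E x_i)$ for $k=|x-y|^2$ gives $\E[U(X,X')^2]=4\,\tr(\bm\Sigma_X^2)/p$, whence $uCov_n^2(\mathbf X,\mathbf X)\asymp_p\tr(\bm\Sigma_X^2)/p$ and $c_{pq}\sqrt{uCov_n^2(\mathbf X,\mathbf X)uCov_n^2(\mathbf Y,\mathbf Y)}\asymp_p\sqrt{\tr(\bm\Sigma_X^2)\tr(\bm\Sigma_Y^2)}/\tau$; together with $\mathcal R_n(\mathbf X,\mathbf Y)=O_p\big(\tau(\alpha_p\lambda_q+\gamma_p\beta_q+\gamma_p\lambda_q)\big)$ and $\tau\asymp\sqrt{pq}$, the scaled cross correction has exact order $n\,\tau(\alpha_p\lambda_q+\gamma_p\beta_q+\gamma_p\lambda_q)/\sqrt{p^{-1}\tr(\bm\Sigma_X^2)\,q^{-1}\tr(\bm\Sigma_Y^2)}$, which is $o(1)$ by the tailor-made condition \eqref{eq:last}. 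For the self corrections, $\sqrt{v-1}\,uCor_n^2=O_p(1)$ reduces the task to $\mathcal R_n(\mathbf X,\mathbf X)/\big(c_{pp}uCov_n^2(\mathbf X,\mathbf X)\big)=o_p(1)$; using the self-specialization $\mathcal R_n(\mathbf X,\mathbf X)=O_p\big(\tau_X^2(\alpha_p\gamma_p+\gamma_p^2)\big)$, Assumption \ref{D4} (so $\tau_X^4(\alpha_p\gamma_p+\gamma_p^2)=o(\tau_X^2)=o(p)$) and the elementary bound $\tr(\bm\Sigma_X^2)\ge\sum_i\var^2(x_i)\gtrsim p$ from Assumption \ref{D1}, this ratio is $o(1)$. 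The $hCov$ case is identical once the $\asymp_p 1$ factor $f^{(1)}(\tau_X/\gamma_{\mathbf X})g^{(1)}(\tau_Y/\gamma_{\mathbf Y})(\tau_X/\gamma_{\mathbf X})(\tau_Y/\gamma_{\mathbf Y})$ of Theorem \ref{thm:decompHsic2}(ii) is carried through, so the random median bandwidths leave the orders intact; the concentration of $uCov_n^2(\mathbf X,\mathbf X),uCov_n^2(\mathbf Y,\mathbf Y)$ around their means needed in the denominators is already contained in Proposition \ref{prop:exactT2} and may simply be invoked.
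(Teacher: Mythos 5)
Your proposal is correct and follows essentially the same route as the paper: reduce to $uCov_n^2$ via Proposition \ref{prop:uni2}, invoke Theorem \ref{thm:key2}/Proposition \ref{prop:exactT2} for the limit, use condition \eqref{eq:last} to kill the cross remainder after the $\sqrt{\binom{n}{2}}$ scaling, and use Assumption \ref{D4} together with the ratio consistency of $uCov_n^2(\mathbf X,\mathbf X)$ and $uCov_n^2(\mathbf Y,\mathbf Y)$ for the self terms. The only cosmetic difference is that you organize the argument as $\sqrt{v-1}(R^*-uCor_n^2)=o_p(1)$ whereas the paper normalizes numerator and denominator separately by $\mathcal S$; your explicit check that $c_{pq}=\sqrt{c_{pp}c_{qq}}$ and the lower bound $\tr(\bm\Sigma_X^2)\gtrsim p$ from Assumption \ref{D1} supply details the paper leaves implicit when asserting the self remainders are negligible.
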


\begin{remark}
The $m$-dependence model in Remark \ref{rem:proof2} can also satisfies Equation \eqref{eq:last} by controlling the orders of $n,m,m'$ based on the magnitude of $ Tr(\bm{\Sigma}_{X}^2)/p $ and $Tr ( \bm{\Sigma}_{Y}^2 )/q $.
\end{remark}

\section{Conclusion}

In this article, we investigate the behavior of the distance covariance and  Hilbert-Schmidt covariance in the high dimensional setting. Somewhat shockingly, we discover that the distance covariance and  Hilbert-Schmidt covariance, which are well-known to capture nonlinear dependence in low/fixed dimensional context, can only capture linear componentwise cross-dependence (to the first order). We believe that this is a new finding that may have significant implications to the design of tests for independence for high dimensional data.
 On one hand, we reveal  the limitation of distance covariance and variants in the high dimensional context, and suggest to use marginally aggregated (sample) distance covariance as a way out, where the latter targets the low dimensional nonlinear dependence.
  On the other hand, we speculate whether it is possible to capture all kinds of dependence between high dimensional vectors $X$ and $Y$, in a limited sample size framework. If the sample size is fixed, we would conjecture that an omnibus test does not exist; If the sample size can grow faster than the dimension, it seems possible but unclear to us how to develop an omnibus test in an asymptotic sense. We hope the results presented in this paper shed some light on the challenges in the high dimensional dependence testing and will motivate more work in this area.


\begin{supplement}
	\sname{Supplement to}\label{suppA}
	\stitle{``Distance-based and RKHS-based Dependence Metrics in High Dimension"}
	\slink[url]{}
	\sdescription{This supplement contains simulations and technical details of the results in the paper.}
\end{supplement}

{
	\bibliographystyle{imsart-nameyear}
	\bibliography{reference}
}

\clearpage
\title{Supplement to ``Distance-based and RKHS-based Dependence Metrics in High Dimension''}

\begin{aug}
	\author{\fnms{Changbo} \snm{Zhu}\corref{}\thanksref{m1}
		\ead[label=e1]{first@somewhere.com}}
	\author{\fnms{Shun} \snm{Yao}\thanksref{m2}
		\ead[label=e2]{second@somewhere.com}}
	\author{\fnms{Xianyang} \snm{Zhang}\thanksref{m3}
		\ead[label=e2]{second@somewhere.com}}
	
	\and
	\author{\fnms{Xiaofeng} \snm{Shao}\thanksref{m1}
		\ead[label=e3]{third@somewhere.com}%
		\ead[label=u1,url]{http://www.foo.com}}
	
	\runauthor{C. Zhu et al.}

	\affiliation{University of Illinois at Urbana-Champaign\thanksmark{m1}, Goldman Sachs at New York City\thanksmark{m2} and Texas A\&M University\thanksmark{m3}}
\end{aug}

\appendix

\section{Simulation Study}
Here, we consider some numerical examples to compare the ``joint'' tests, where the distance/Hilbert-Schmidt covariance is applied to whole components of data jointly,  with the ``marginal'' tests, where distance/Hilbert-Schmidt covariance is applied to one dimensional components and then being aggregated. To this end, we consider the following statistics
\begin{align*}
\text{``Joint"} &  \left\lbrace   \begin{array}{l}
dCov : \text{ distance covariance (permutation)} \\
T_{dCov} : \text{studentized distance covariance} \\
hCov : \text{ Hilbert-Schmidt covariance (permutation)} \\
T_{hCov} : \text{studentized Hilbert-Schmidt covariance}
\end{array} \right.  \\
\text{``Marginal"} &  \left\lbrace  \begin{array}{l}
mdCov : \text{marginal distance covariance (permutation)} \\
T_{mdCov} : \text{studentized marginal distance covariance}, \\
mhCov : \text{marginal Hilbert-Schmidt covariance (permutation)} \\
T_{mhCov} : \text{studentized marginal Hilbert-Schmidt covariance}
\end{array} \right.
\end{align*}
In the above display, $ dCov_n^2$ and $ hCov_n^2 $ are the two ``joint'' test statistics to measure the overall dependence between $X$ and $Y$,   $mdCov_n^2$ and $mhCov_n^2$ are the ``marginal'' test statistics, and these four test statistics are implemented as permutation tests; $T_{dCov}$ from \cite{szekely2013} is the studentized version of $dCov$, our proposed $t$-tests $T_{hCov}, T_{mdCov}, T_{mhCov}$ are the studentized version of $hCov, mdCov, mhCov$ respectively. All these four tests are implemented using the $t$-distribution based critical value. We examine both the Gaussian kernel and Laplacian kernel for the Hilbert-Schmidt covariance based tests.

For the permutation-based tests, we randomly shuffle the samples $\{X_1,\dots,$ $X_n\}$ and get $(X_{\pi(1)},\dots,X_{\pi(n)})$, where $\pi$ is the permutation map from $\{1,\dots,n\}$ to $\{1,\dots,n\}$. Then we calculate the test statistic based on the permuted sample $\{(X_{\pi(1)},\dots,X_{\pi(n)})$,  $(Y_1,\dots,Y_n)\}$. The $p$-value for permutation-based test is defined as the proportion of times that the test statistic based on the permuted samples is greater than the one based on the original sample.
All the numerical results from permutation-based tests are based on 200 permutations and the empirical rejection rate of the tests are based on 5000 Monte Carlo repetitions.

We first examine the size of the afore-mentioned tests.
\begin{example}
	\label{exp1}
	Generate i.i.d. samples from the following models for $i=1,\dots,n$.
	\begin{itemize}
		\item[(i)]
		$$
		\begin{array}{l}
		X_{i} = \left( x_{i1}, \dots, x_{ip} \right) \sim N(\mathbf 0, \mathbf I_{p}), \\
		Y_{i} = \left( y_{i1}, \dots, y_{ip} \right) \sim N(\mathbf 0, \mathbf I_{p}).
		\end{array} $$
		\item[(ii)] Let $AR(1)$ denotes the Gaussian autoregressive model of order 1 with parameter $\phi$,
		$$
		\begin{array}{l}
		X_{i} \sim AR(1), \phi = 0.5, \\
		Y_{i} \sim AR(1), \phi = -0.5.
		\end{array} $$
		\item[(iii)] Let $\bm \Sigma = (\sigma_{ij}) \in \mathbb{R}^{p \times p} \text{ and }  \sigma_{ij} = 0.7^{|i-j|}$,
		\begin{align*}
		& X_i =(x_{i1},\dots,x_{ip}) \sim N(\mathbf 0, \bm \Sigma), \\
		& Y_i =(y_{i1},\dots,y_{ip}) \sim N(\mathbf 0, \bm \Sigma).
		\end{align*}
	\end{itemize}
\end{example}
From Table \ref{tab1}, we can see that all the tests have quite accurate size. Although the $t$-tests are derived under the high dimensional scenario, they still have pretty accurate size even for relatively low dimension (e.g., $p=5$). In addition, for data samples from Example \ref{exp1} (i), we provide the density plots of the studentized test statistics in Figure \ref{figR2} as well as the density plots of $t_{v-1}$. As we can see, for all cases, the empirical densities are fairly close to that of $t_{v-1}$ and getting closer to $t_{v-1}$ as dimension increases.
\begin{figure}
	\begin{tikzpicture}
	\matrix[
	matrix of nodes,
	nodes={
		anchor=center
	}
	](m){ & n=30 & n=60 \\
		$T_{dCov}$	&  \includegraphics[width=5cm, height=2.7cm ]{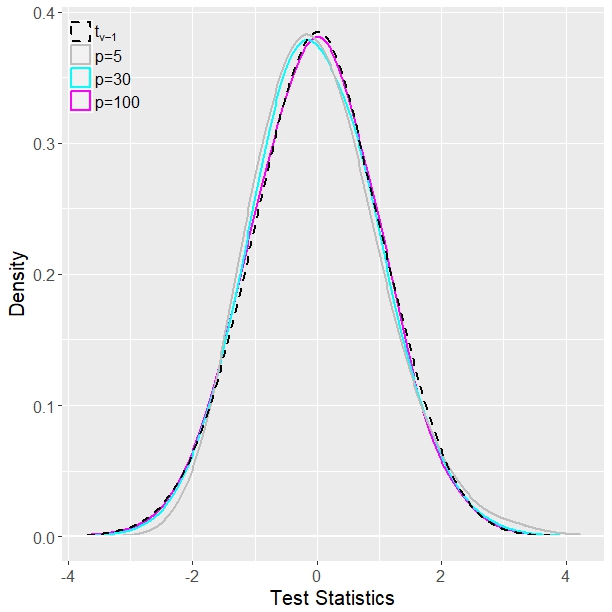} & \includegraphics[width=5cm, height=2.7cm]{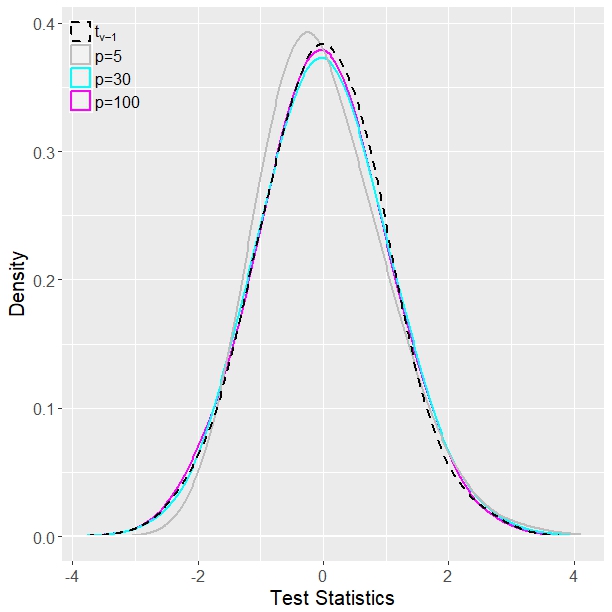}  \\
		$T_{mdCov}$	&  \includegraphics[width=5cm, height=2.7cm ]{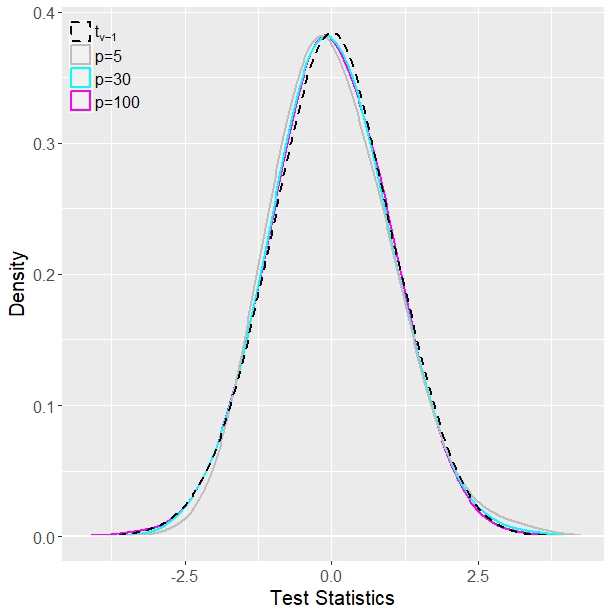} & \includegraphics[width=5cm, height=2.7cm]{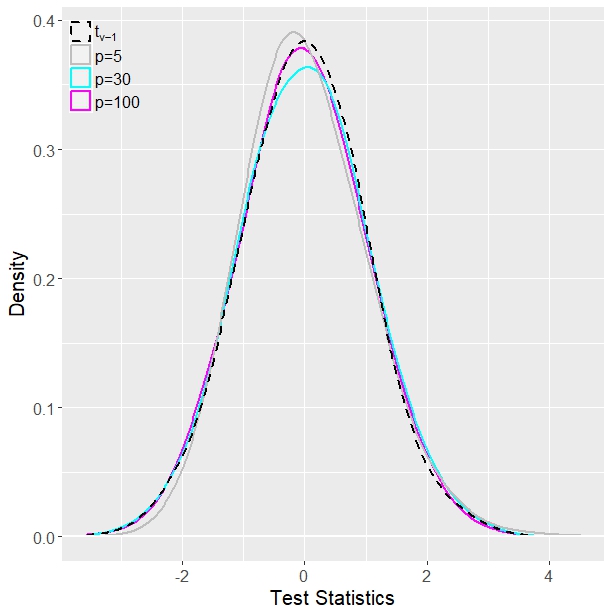}  \\
		$\underset{(\text{Gaussian kernel})}{T_{hCov}}$ 	&  \includegraphics[width=5cm, height=2.7cm ]{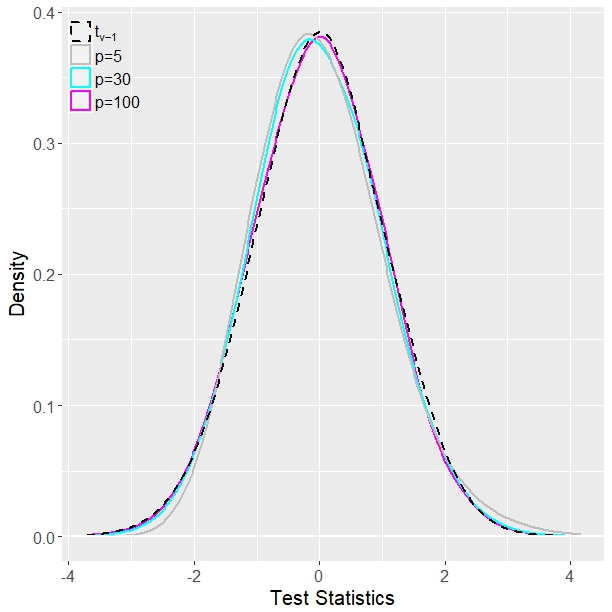} & \includegraphics[width=5cm, height=2.7cm]{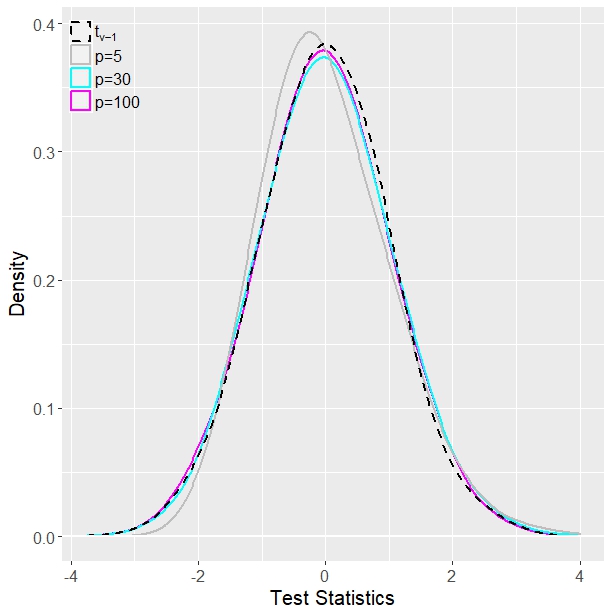}  \\
		$\underset{ (\text{Gaussian kernel}) }{ T_{mhCov} }$ 	&  \includegraphics[width=5cm, height=2.7cm ]{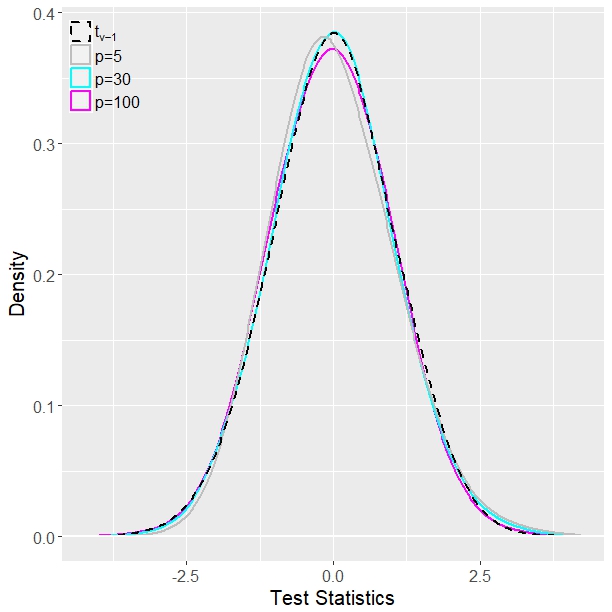} & \includegraphics[width=5cm, height=2.7cm]{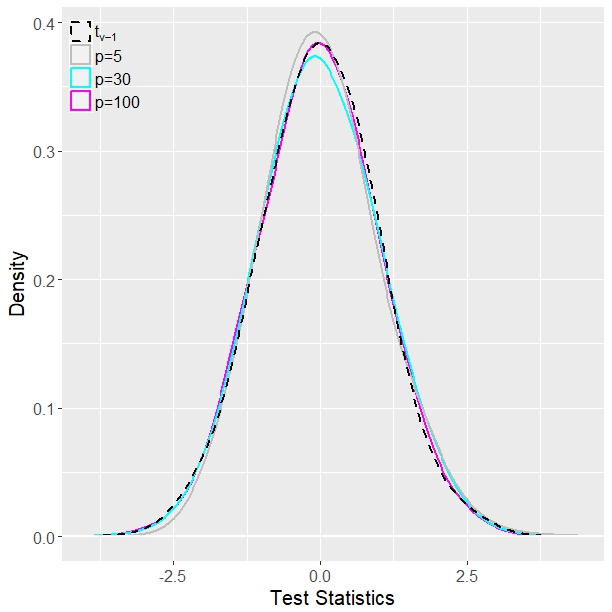}  \\
		$ \underset{ (\text{Laplacian kernel}) }{T_{hCov} }  $ 	&  \includegraphics[width=5cm, height=2.7cm ]{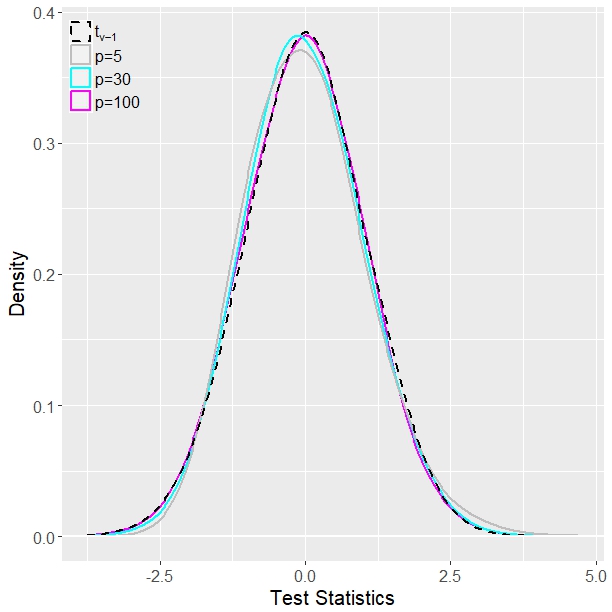} & \includegraphics[width=5cm, height=2.7cm]{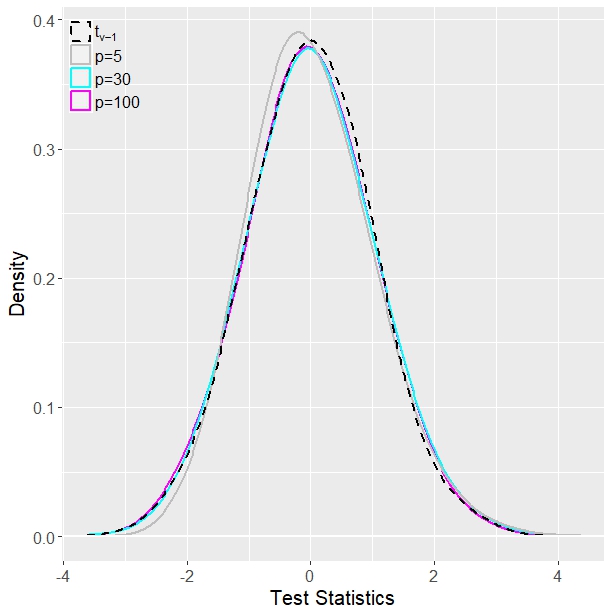}  \\
		$ \underset{ (\text{Laplacian kernel}) }{ T_{mhCov} } $ 	&  \includegraphics[width=5cm, height=2.7cm ]{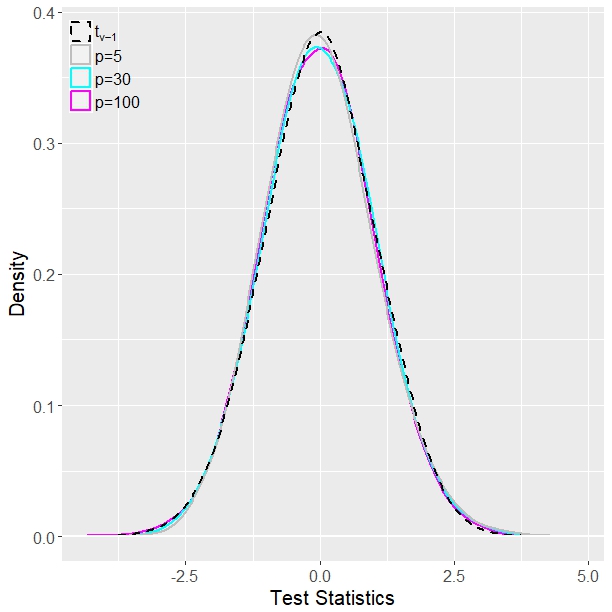} & \includegraphics[width=5cm, height=2.7cm]{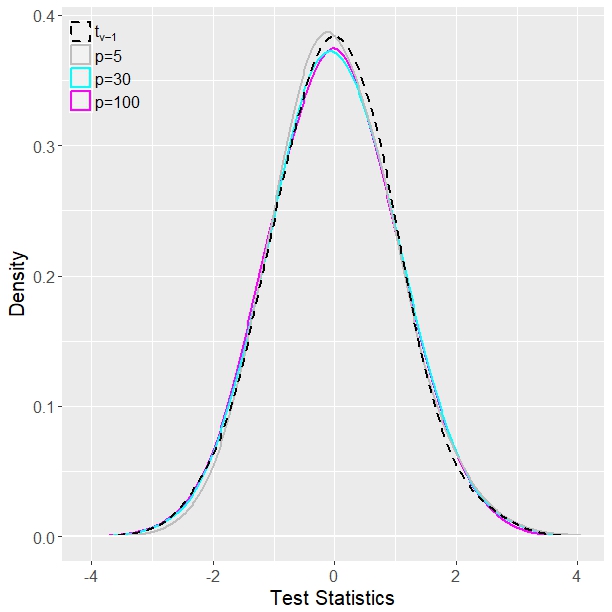} \\
	} ;
	\end{tikzpicture}
	\caption{Density plots of the studentized test statistics (solid colored lines) and $t_{v-1}$ (dashed black line).}
	\label{figR2}
\end{figure}

Notice that under the high dimensional case, the ``joint" tests can be seen as the aggregation of component-wise sample squared covariances. On the other hand, the ``marginal" tests are the accumulation of component-wise sample distance/Hilbert-Schmidt covariances. When $(X,Y)$ are generated from the model in Proposition \ref{signal:normal}, it is expected that there is power loss for $mdCov$ and $mhCov$ based permutation test comparing to $dCov$ and $hCov$ based permutation tests and similar phenomenon is expected for $mdCov$ and $mhCov$ based $t$-test comparing to $dCov$ and $hCov$ based $t$-tests. The following example demonstrates this phenomemon.
\begin{example}
	\label{exp2}
	Generate i.i.d. samples from the following models for $i=1,...,n$.
	\begin{itemize}
		\item[(i)] Let $\rho = 0.5$,
		\begin{align*}
		\begin{array}{l}
		Z_{i} = \left( z_{i1}, \cdots, z_{ip} \right) \sim N(\mathbf 0, \mathbf I_{p}),\\
		X_{i} = \left( x_{i1}, \cdots, x_{ip} \right) \sim N(\mathbf 0, \mathbf I_{p}),\\
		Y_{i} = \frac{\rho X_{i} + (1 - \rho)Z_{i}}{\sqrt{\rho^2 + (1 - \rho)^2}}.
		\end{array}
		\end{align*}	
		
		\item[(ii)] Let $ \rho = 0.7 $ and $(X_i,Y_i,Z_i)$ be defined in the same way as in (i).
		
		\item[(iii)] Let $ \rho = 0.5 $ and $\otimes$ denote the Kronecker product. Define
		\begin{align*}
		\begin{array}{l}
		Z_{i} = \left( z_{i1}, \cdots, z_{ip} \right) \sim N(\mathbf 0, \mathbf I_{p}), \\
		X_{i} = \left( x_{i1}, \cdots, x_{ip} \right) \sim N(\mathbf 0, \mathbf I_{p}), \\
		Y_{i} = \frac{\rho \bm \Sigma X_i + (1 - \rho)Z_{i}}{\sqrt{\rho^2 + (1 - \rho)^2}},
		\end{array}
		\end{align*}
		where $\bm{\Sigma} = \mathbf{I} \otimes \mathbf{A}$ and $\mathbf{A}$ is an orthogonal matrix defined as
		\begin{align*}
		\mathbf{A} =   \left(
		\begin{array}{ccccc}
		0 & \sqrt{\frac{1}{4}}  & \sqrt{\frac{1}{5}}  & -\sqrt{\frac{1}{4}} & -\sqrt{\frac{3}{10}} \\
		\sqrt{\frac{1}{6}} & \sqrt{\frac{1}{4}} & \sqrt{\frac{1}{5}} & \sqrt{\frac{1}{4}} &  \sqrt{\frac{2}{15}} \\
		-\sqrt{\frac{2}{3}} & 0 & \sqrt{\frac{1}{5}} & 0 & \sqrt{\frac{2}{15}} \\
		\sqrt{\frac{1}{6}} & -\sqrt{\frac{1}{4}} & \sqrt{\frac{1}{5}} & -\sqrt{\frac{1}{4}} & \sqrt{\frac{2}{15}} \\
		0 & -\sqrt{\frac{1}{4}} & \sqrt{\frac{1}{5}} & \sqrt{\frac{1}{4}} & -\sqrt{\frac{3}{10}}		
		\end{array} \right).
		\end{align*}
	\end{itemize}
\end{example}
From Table \ref{tab2}, we can see that there is indeed a power loss for the ``marginal" tests compared to the ``joint" tests, but the loss of power appears fairly moderate, which is consistent with our theory. For Example \ref{exp2}, it can also be observed that the power decrease for the Hilbert-Schmidt covariance based tests is a bit more than the power decrease of distance covariance based tests. Moreover, the power drop is slightly smaller for Gaussian kernel comparing with Laplacian kernel.

As demonstrated in Theorem \ref{thm:decomp} and \ref{thm:decompHsic}, the leading term in (\ref{eq:decomp}) and (\ref{eq:decompHSIC}) can only measure the linear dependence as $p \wedge q \rightarrow \infty$, therefore we expect the ``joint'' test based on $dCov_n^2(\mathbf X,\mathbf Y)$ or $ hCov_n^2(\mathbf X, \mathbf Y) $ may fail to capture the non-linear dependence in high dimension. On the other hand, we consider the ``marginal'' test where we take the sum of pairwise sample distance/Hilbert-Schmidt covariances to measure the low dimensional dependence
for all the pairs as the test proposed in Sections  \ref{dCov} and \ref{hCov}. The ``marginal'' test statistic measures the dependence marginally in a low-dimensional fashion so that it can preserve the ability to capture component-wise non-linear dependence. In the following two examples, we demonstrate the superiority of ``marginal'' tests.
\begin{example}
	\label{exp3}
	Generate $i.i.d.$ samples from the following models for $i=1,...,n$.
	\begin{itemize}
		\item[(i)] 	
		\begin{align*}
		& X_i =(x_{i1},\dots,x_{ip}) \sim N(\mathbf 0, \mathbf I_p), \\
		& Y_i = (y_{i1},\dots,y_{ip}), \mbox{~where~} y_{ij} = x_{ij}^2 \mbox{~for~} j=1,\dots,p.
		\end{align*}
		\item[(ii)] Let $\bm \Sigma = (\sigma_{ij}) \in \mathbb{R}^{p \times p} \text{ and }  \sigma_{ij} = 0.7^{|i-j|}$,
		\begin{align*}
		& X_i =(x_{i1},\dots,x_{ip}) \sim N(\mathbf 0, \bm \Sigma), \\
		& Y_i = (y_{i1},\dots,y_{ip}), \mbox{~where~} y_{ij} = x_{ij}^2 \mbox{~for~} j=1,\dots,p.
		\end{align*}
		
		\item[(iii)] 	
		\begin{align*}
		& X_i =(x_{i1},\dots,x_{ip}) \sim N(\mathbf 0, \mathbf I_p), \\
		& Y_i = (y_{i1},\dots,y_{ip}), \mbox{~where~} y_{ij} = \log |x_{ij}| \mbox{~for~} j=1,\dots,p.
		\end{align*}
	\end{itemize}
\end{example}

\begin{example}
	\label{exp4}
	Generate $i.i.d.$ samples from the following models for $i=1,\dots,n$.
	\begin{itemize}
		\item[(i)] Let $\circ$ denotes the Hadamard product,
		\begin{align*}
		& X_i =(x_{i1},\dots,x_{ip}) \overset{i.i.d.}{\sim}  U(-1, 1), \\
		&Y_i = X_i \circ X_i.
		\end{align*}
		\item[(ii)] \begin{align*}
		& X_i =(x_{i1},\dots,x_{ip}) \overset{i.i.d.}{\sim} U(0, 1), \\
		&Y_i = 4X_i \circ X_i \circ X_i - 3.6X_i +0.8.
		\end{align*}
		\item[(iii)]
		\begin{align*}
		& Z_i =(z_{i1},\dots,z_{ip}) \overset{i.i.d.}{\sim} U(0, 2 \pi), \\
		&X_i = \sin(Z_i), \quad Y_i =\cos(Z_i).
		\end{align*}	
	\end{itemize}
\end{example}

Notice that in the above two examples,
$\cov^2(x_i,y_j)=0$ but $dCov^2(x_i,y_j)$ $ \ne 0$ for all $(i,j)s$, that is, $(X,Y) \in H_{A_s}$. From Table \ref{tab3}, we can observe that for Example \ref{exp3}, the ``joint'' tests suffer substantial power loss as dimension increases for fixed sample size. The power loss is less severe in case (ii) than the ones in cases (i) and (iii), due to the dependence between the components. On the other hand, the powers corresponding to the marginal test statistics consistently outperform their  joint counterparts with very little to none power reduction as the dimension increases. Similar phenomenon can be observed for Example \ref{exp4}; see Table \ref{tab4}.  In addition, for all the cases in both Example \ref{exp3} and Example \ref{exp4}, the power loss corresponding to Laplacian kernel is consistently less than that for Gaussian kernel.
In general, we observe that the tests based on distance covariance, Hilbert-Schmidt covariance with Gaussian kernel, and Hilbert-Schmidt covariance with Laplacian kernel, are all admissible, as none of them dominate the others in all situations.

\begin{table}[!htbp] \centering
	\caption{Size comparison from Example \ref{exp1}}
	\label{tab1}
	\scalebox{0.6}{
		\begin{tabular}{@{\extracolsep{5pt}} cccccccccccccccc}
			\\[-1.8ex]\hline
			\hline \\[-1.8ex]
			&& & & \multicolumn{4}{c}{} & \multicolumn{4}{c}{Gaussian Kernel} & \multicolumn{4}{c}{Laplacian Kernel} \\ \cline{9-12} \cline{13-16}
			&$n$ & $p$ & $\alpha$ & $dCov$ & $mdCov$ & $T_{dCov}$ & $T_{mdCov}$ & $hCov$ & $mhCov$ & $T_{hCov}$ & $T_{mhCov}$ & $hCov$ & $mhCov$ & $T_{hCov}$ & $T_{mhCov}$  \\
			\hline \\[-1.8ex]
			\multirow{18}{*}{(i)}&$10$ & $5$ & $0.010$ & $0.017$ & $0.014$ & $0.020$ & $0.014$ & $0.016$ & $0.015$ & $0.020$ & $0.014$ & $0.014$ & $0.014$ & $0.017$ & $0.013$ \\
			&$10$ & $5$ & $0.050$ & $0.055$ & $0.055$ & $0.062$ & $0.061$ & $0.055$ & $0.060$ & $0.062$ & $0.061$ & $0.055$ & $0.050$ & $0.064$ & $0.050$ \\
			&$10$ & $5$ & $0.100$ & $0.105$ & $0.107$ & $0.110$ & $0.110$ & $0.103$ & $0.106$ & $0.109$ & $0.109$ & $0.102$ & $0.099$ & $0.105$ & $0.101$ \\
			&$10$ & $30$ & $0.010$ & $0.015$ & $0.015$ & $0.013$ & $0.011$ & $0.015$ & $0.016$ & $0.012$ & $0.012$ & $0.014$ & $0.014$ & $0.011$ & $0.011$ \\
			&$10$ & $30$ & $0.050$ & $0.054$ & $0.053$ & $0.050$ & $0.053$ & $0.053$ & $0.054$ & $0.050$ & $0.052$ & $0.052$ & $0.059$ & $0.050$ & $0.054$ \\
			&$10$ & $30$ & $0.100$ & $0.102$ & $0.104$ & $0.099$ & $0.102$ & $0.102$ & $0.105$ & $0.100$ & $0.103$ & $0.102$ & $0.107$ & $0.101$ & $0.105$ \\  \cline{2-16}
			&$30$ & $5$ & $0.010$ & $0.014$ & $0.016$ & $0.019$ & $0.018$ & $0.016$ & $0.016$ & $0.020$ & $0.017$ & $0.016$ & $0.015$ & $0.019$ & $0.015$ \\
			&$30$ & $5$ & $0.050$ & $0.052$ & $0.053$ & $0.062$ & $0.059$ & $0.052$ & $0.057$ & $0.061$ & $0.059$ & $0.054$ & $0.055$ & $0.061$ & $0.058$ \\
			&$30$ & $5$ & $0.100$ & $0.105$ & $0.104$ & $0.105$ & $0.107$ & $0.103$ & $0.107$ & $0.106$ & $0.106$ & $0.105$ & $0.104$ & $0.109$ & $0.104$ \\
			&$30$ & $30$ & $0.010$ & $0.014$ & $0.014$ & $0.011$ & $0.012$ & $0.014$ & $0.017$ & $0.010$ & $0.013$ & $0.014$ & $0.017$ & $0.011$ & $0.013$ \\
			&$30$ & $30$ & $0.050$ & $0.051$ & $0.053$ & $0.052$ & $0.051$ & $0.051$ & $0.056$ & $0.052$ & $0.053$ & $0.051$ & $0.058$ & $0.051$ & $0.052$ \\
			&$30$ & $30$ & $0.100$ & $0.097$ & $0.105$ & $0.096$ & $0.103$ & $0.097$ & $0.105$ & $0.095$ & $0.101$ & $0.099$ & $0.104$ & $0.100$ & $0.102$ \\ \cline{2-16}
			&$60$ & $5$ & $0.010$ & $0.013$ & $0.015$ & $0.018$ & $0.016$ & $0.014$ & $0.013$ & $0.019$ & $0.016$ & $0.014$ & $0.015$ & $0.017$ & $0.015$ \\
			&$60$ & $5$ & $0.050$ & $0.052$ & $0.055$ & $0.061$ & $0.057$ & $0.054$ & $0.061$ & $0.060$ & $0.064$ & $0.053$ & $0.057$ & $0.058$ & $0.058$ \\
			&$60$ & $5$ & $0.100$ & $0.103$ & $0.104$ & $0.109$ & $0.104$ & $0.107$ & $0.108$ & $0.110$ & $0.110$ & $0.102$ & $0.101$ & $0.103$ & $0.102$ \\
			&$60$ & $30$ & $0.010$ & $0.019$ & $0.017$ & $0.016$ & $0.012$ & $0.019$ & $0.015$ & $0.015$ & $0.013$ & $0.020$ & $0.016$ & $0.015$ & $0.014$ \\
			&$60$ & $30$ & $0.050$ & $0.060$ & $0.063$ & $0.057$ & $0.058$ & $0.060$ & $0.058$ & $0.057$ & $0.058$ & $0.061$ & $0.058$ & $0.058$ & $0.055$ \\
			&$60$ & $30$ & $0.100$ & $0.113$ & $0.112$ & $0.110$ & $0.107$ & $0.113$ & $0.109$ & $0.111$ & $0.105$ & $0.110$ & $0.111$ & $0.107$ & $0.107$ \\
			\hline \\[-1.8ex]
			\multirow{18}{*}{(ii)} &$10$ & $5$ & $0.010$ & $0.015$ & $0.015$ & $0.023$ & $0.023$ & $0.014$ & $0.016$ & $0.023$ & $0.019$ & $0.015$ & $0.017$ & $0.022$ & $0.021$ \\
			&$10$ & $5$ & $0.050$ & $0.051$ & $0.054$ & $0.064$ & $0.066$ & $0.053$ & $0.058$ & $0.064$ & $0.066$ & $0.054$ & $0.058$ & $0.066$ & $0.062$ \\
			&$10$ & $5$ & $0.100$ & $0.101$ & $0.105$ & $0.107$ & $0.111$ & $0.100$ & $0.109$ & $0.105$ & $0.113$ & $0.102$ & $0.110$ & $0.106$ & $0.109$ \\
			&$10$ & $30$ & $0.010$ & $0.014$ & $0.018$ & $0.013$ & $0.016$ & $0.014$ & $0.017$ & $0.014$ & $0.013$ & $0.017$ & $0.018$ & $0.017$ & $0.013$ \\
			&$10$ & $30$ & $0.050$ & $0.060$ & $0.061$ & $0.061$ & $0.061$ & $0.061$ & $0.056$ & $0.062$ & $0.056$ & $0.059$ & $0.060$ & $0.059$ & $0.056$ \\
			&$10$ & $30$ & $0.100$ & $0.105$ & $0.105$ & $0.110$ & $0.107$ & $0.105$ & $0.105$ & $0.109$ & $0.099$ & $0.106$ & $0.108$ & $0.111$ & $0.104$ \\ \cline{2-16}
			&$30$ & $5$ & $0.010$ & $0.012$ & $0.011$ & $0.022$ & $0.023$ & $0.012$ & $0.014$ & $0.021$ & $0.020$ & $0.013$ & $0.013$ & $0.019$ & $0.016$ \\
			&$30$ & $5$ & $0.050$ & $0.046$ & $0.048$ & $0.055$ & $0.056$ & $0.046$ & $0.052$ & $0.055$ & $0.059$ & $0.047$ & $0.053$ & $0.051$ & $0.059$ \\
			&$30$ & $5$ & $0.100$ & $0.094$ & $0.096$ & $0.094$ & $0.096$ & $0.096$ & $0.100$ & $0.097$ & $0.100$ & $0.093$ & $0.107$ & $0.097$ & $0.104$ \\
			&$30$ & $30$ & $0.010$ & $0.016$ & $0.016$ & $0.017$ & $0.015$ & $0.017$ & $0.015$ & $0.017$ & $0.011$ & $0.017$ & $0.015$ & $0.017$ & $0.012$ \\
			&$30$ & $30$ & $0.050$ & $0.061$ & $0.058$ & $0.060$ & $0.059$ & $0.061$ & $0.055$ & $0.060$ & $0.054$ & $0.058$ & $0.052$ & $0.060$ & $0.051$ \\
			&$30$ & $30$ & $0.100$ & $0.109$ & $0.105$ & $0.110$ & $0.107$ & $0.111$ & $0.101$ & $0.110$ & $0.098$ & $0.111$ & $0.102$ & $0.113$ & $0.097$ \\  \cline{2-16}
			&$60$ & $5$ & $0.010$ & $0.015$ & $0.013$ & $0.026$ & $0.022$ & $0.016$ & $0.014$ & $0.024$ & $0.020$ & $0.013$ & $0.015$ & $0.020$ & $0.018$ \\
			&$60$ & $5$ & $0.050$ & $0.055$ & $0.052$ & $0.062$ & $0.061$ & $0.055$ & $0.053$ & $0.061$ & $0.059$ & $0.055$ & $0.052$ & $0.061$ & $0.054$ \\
			&$60$ & $5$ & $0.100$ & $0.101$ & $0.100$ & $0.103$ & $0.100$ & $0.102$ & $0.100$ & $0.104$ & $0.099$ & $0.101$ & $0.097$ & $0.103$ & $0.099$ \\
			&$60$ & $30$ & $0.010$ & $0.013$ & $0.014$ & $0.013$ & $0.014$ & $0.013$ & $0.016$ & $0.014$ & $0.013$ & $0.014$ & $0.015$ & $0.013$ & $0.012$ \\
			&$60$ & $30$ & $0.050$ & $0.055$ & $0.051$ & $0.058$ & $0.051$ & $0.054$ & $0.054$ & $0.057$ & $0.053$ & $0.058$ & $0.053$ & $0.053$ & $0.052$ \\
			&$60$ & $30$ & $0.100$ & $0.105$ & $0.102$ & $0.105$ & $0.100$ & $0.106$ & $0.103$ & $0.105$ & $0.102$ & $0.107$ & $0.105$ & $0.107$ & $0.104$ \\
			\hline \\[-1.8ex]
			\multirow{18}{*}{(iii)}& $10$ & $5$ & $0.010$ & $0.012$ & $0.013$ & $0.025$ & $0.024$ & $0.012$ & $0.014$ & $0.024$ & $0.022$ & $0.016$ & $0.013$ & $0.025$ & $0.019$ \\
			& $10$ & $5$ & $0.050$ & $0.051$ & $0.051$ & $0.068$ & $0.069$ & $0.053$ & $0.051$ & $0.068$ & $0.062$ & $0.053$ & $0.049$ & $0.067$ & $0.056$ \\
			& $10$ & $5$ & $0.100$ & $0.100$ & $0.099$ & $0.107$ & $0.103$ & $0.100$ & $0.098$ & $0.105$ & $0.102$ & $0.100$ & $0.098$ & $0.104$ & $0.101$ \\
			& $10$ & $30$ & $0.010$ & $0.014$ & $0.015$ & $0.016$ & $0.014$ & $0.014$ & $0.015$ & $0.016$ & $0.013$ & $0.015$ & $0.015$ & $0.017$ & $0.013$ \\
			& $10$ & $30$ & $0.050$ & $0.055$ & $0.057$ & $0.061$ & $0.058$ & $0.053$ & $0.056$ & $0.061$ & $0.056$ & $0.057$ & $0.057$ & $0.064$ & $0.059$ \\
			&$10$ & $30$ & $0.100$ & $0.104$ & $0.105$ & $0.105$ & $0.107$ & $0.103$ & $0.105$ & $0.104$ & $0.107$ & $0.106$ & $0.110$ & $0.106$ & $0.112$ \\  \cline{2-16}
			&$30$ & $5$ & $0.010$ & $0.015$ & $0.014$ & $0.028$ & $0.029$ & $0.015$ & $0.014$ & $0.025$ & $0.024$ & $0.014$ & $0.014$ & $0.024$ & $0.019$ \\
			&$30$ & $5$ & $0.050$ & $0.052$ & $0.054$ & $0.060$ & $0.062$ & $0.051$ & $0.052$ & $0.062$ & $0.062$ & $0.048$ & $0.052$ & $0.058$ & $0.059$ \\
			&$30$ & $5$ & $0.100$ & $0.103$ & $0.103$ & $0.098$ & $0.099$ & $0.101$ & $0.101$ & $0.101$ & $0.098$ & $0.099$ & $0.099$ & $0.097$ & $0.098$ \\
			&$30$ & $30$ & $0.010$ & $0.017$ & $0.015$ & $0.019$ & $0.017$ & $0.016$ & $0.015$ & $0.019$ & $0.015$ & $0.013$ & $0.016$ & $0.018$ & $0.012$ \\
			&$30$ & $30$ & $0.050$ & $0.054$ & $0.055$ & $0.058$ & $0.058$ & $0.055$ & $0.055$ & $0.059$ & $0.057$ & $0.056$ & $0.057$ & $0.063$ & $0.056$ \\
			&$30$ & $30$ & $0.100$ & $0.102$ & $0.105$ & $0.105$ & $0.103$ & $0.101$ & $0.099$ & $0.103$ & $0.102$ & $0.104$ & $0.107$ & $0.105$ & $0.105$ \\  \cline{2-16}
			&$60$ & $5$ & $0.010$ & $0.012$ & $0.012$ & $0.029$ & $0.027$ & $0.014$ & $0.012$ & $0.028$ & $0.024$ & $0.016$ & $0.011$ & $0.023$ & $0.021$ \\
			&$60$ & $5$ & $0.050$ & $0.052$ & $0.052$ & $0.063$ & $0.064$ & $0.050$ & $0.048$ & $0.063$ & $0.059$ & $0.050$ & $0.052$ & $0.059$ & $0.061$ \\
			&$60$ & $5$ & $0.100$ & $0.100$ & $0.101$ & $0.098$ & $0.095$ & $0.098$ & $0.099$ & $0.097$ & $0.099$ & $0.099$ & $0.098$ & $0.100$ & $0.094$ \\
			&$60$ & $30$ & $0.010$ & $0.017$ & $0.015$ & $0.020$ & $0.019$ & $0.016$ & $0.017$ & $0.020$ & $0.017$ & $0.016$ & $0.015$ & $0.019$ & $0.014$ \\
			&$60$ & $30$ & $0.050$ & $0.052$ & $0.053$ & $0.058$ & $0.060$ & $0.055$ & $0.057$ & $0.061$ & $0.059$ & $0.057$ & $0.056$ & $0.062$ & $0.059$ \\
			&$60$ & $30$ & $0.100$ & $0.103$ & $0.106$ & $0.107$ & $0.103$ & $0.102$ & $0.106$ & $0.107$ & $0.105$ & $0.103$ & $0.102$ & $0.106$ & $0.101$ \\
			\hline \\[-1.8ex]
	\end{tabular} }
\end{table}

\begin{table}[!htbp] \centering
	\caption{Power comparision from Example \ref{exp2}}
	\label{tab2}
	\scalebox{0.6}{
		\begin{tabular}{@{\extracolsep{5pt}} cccccccccccccccc}
			\\[-1.8ex]\hline \hline \\[-1.8ex]
			&& & & \multicolumn{4}{c}{} & \multicolumn{4}{c}{Gaussian Kernel} & \multicolumn{4}{c}{Laplacian Kernel} \\ \cline{9-12} \cline{13-16}
			&$n$ & $p$ & $\alpha$ & $dCov$ & $mdCov$ & $T_{dCov}$ & $T_{mdCov}$ & $hCov$ & $mhCov$ & $T_{hCov}$ & $T_{mhCov}$ & $hCov$ & $mhCov$ & $T_{hCov}$ & $T_{mhCov}$  \\
			\hline \\[-1.8ex]
			\multirow{18}{*}{(i)}&$10$ & $5$ & $0.010$ & $0.635$ & $0.560$ & $0.691$ & $0.597$ & $0.629$ & $0.371$ & $0.685$ & $0.392$ & $0.516$ & $0.237$ & $0.585$ & $0.246$ \\
			&$10$ & $5$ & $0.050$ & $0.833$ & $0.774$ & $0.855$ & $0.792$ & $0.825$ & $0.598$ & $0.849$ & $0.610$ & $0.741$ & $0.450$ & $0.772$ & $0.458$ \\
			&$10$ & $5$ & $0.100$ & $0.910$ & $0.861$ & $0.914$ & $0.867$ & $0.906$ & $0.717$ & $0.912$ & $0.721$ & $0.839$ & $0.581$ & $0.851$ & $0.586$ \\
			&$10$ & $30$ & $0.010$ & $0.795$ & $0.654$ & $0.788$ & $0.634$ & $0.796$ & $0.410$ & $0.787$ & $0.379$ & $0.769$ & $0.247$ & $0.762$ & $0.219$ \\
			&$10$ & $30$ & $0.050$ & $0.936$ & $0.849$ & $0.937$ & $0.851$ & $0.935$ & $0.648$ & $0.937$ & $0.644$ & $0.921$ & $0.468$ & $0.924$ & $0.460$ \\
			&$10$ & $30$ & $0.100$ & $0.970$ & $0.914$ & $0.970$ & $0.916$ & $0.970$ & $0.767$ & $0.970$ & $0.768$ & $0.963$ & $0.604$ & $0.964$ & $0.603$ \\
			\cline{2-16}
			&$30$ & $5$ & $0.010$ & $1$ & $1$ & $1$ & $1$ & $1$ & $0.999$ & $1$ & $0.998$ & $1$ & $0.980$ & $1$ & $0.982$ \\
			&$30$ & $5$ & $0.050$ & $1$ & $1$ & $1$ & $1$ & $1$ & $1.000$ & $1$ & $1.000$ & $1$ & $0.996$ & $1$ & $0.996$ \\
			&$30$ & $5$ & $0.100$ & $1$ & $1$ & $1$ & $1$ & $1$ & $1.000$ & $1$ & $1.000$ & $1$ & $0.998$ & $1$ & $0.998$ \\
			&$30$ & $30$ & $0.010$ & $1$ & $1$ & $1$ & $1$ & $1$ & $1$ & $1$ & $1$ & $1$ & $0.996$ & $1$ & $0.996$ \\
			&$30$ & $30$ & $0.050$ & $1$ & $1$ & $1$ & $1$ & $1$ & $1$ & $1$ & $1$ & $1$ & $0.999$ & $1$ & $0.999$ \\
			&$30$ & $30$ & $0.100$ & $1$ & $1$ & $1$ & $1$ & $1$ & $1$ & $1$ & $1$ & $1$ & $1.000$ & $1$ & $1.000$ \\
			\cline{2-16}
			&$60$ & $5$ & $0.010$ & $1$ & $1$ & $1$ & $1$ & $1$ & $1$ & $1$ & $1$ & $1$ & $1$ & $1$ & $1$ \\
			&$60$ & $5$ & $0.050$ & $1$ & $1$ & $1$ & $1$ & $1$ & $1$ & $1$ & $1$ & $1$ & $1$ & $1$ & $1$ \\
			&$60$ & $5$ & $0.100$ & $1$ & $1$ & $1$ & $1$ & $1$ & $1$ & $1$ & $1$ & $1$ & $1$ & $1$ & $1$ \\
			&$60$ & $30$ & $0.010$ & $1$ & $1$ & $1$ & $1$ & $1$ & $1$ & $1$ & $1$ & $1$ & $1$ & $1$ & $1$ \\
			&$60$ & $30$ & $0.050$ & $1$ & $1$ & $1$ & $1$ & $1$ & $1$ & $1$ & $1$ & $1$ & $1$ & $1$ & $1$ \\
			&$60$ & $30$ & $0.100$ & $1$ & $1$ & $1$ & $1$ & $1$ & $1$ & $1$ & $1$ & $1$ & $1$ & $1$ & $1$ \\
			\hline \\[-1.8ex]
			\multirow{18}{*}{(ii)}& $10$ & $5$ & $0.010$ & $1.000$ & $0.999$ & $1.000$ & $0.999$ & $1.000$ & $0.986$ & $1.000$ & $0.989$ & $0.997$ & $0.935$ & $0.999$ & $0.942$ \\
			&$10$ & $5$ & $0.050$ & $1.000$ & $1.000$ & $1.000$ & $1.000$ & $1.000$ & $0.997$ & $1.000$ & $0.997$ & $0.999$ & $0.983$ & $1.000$ & $0.983$ \\
			&$10$ & $5$ & $0.100$ & $1.000$ & $1.000$ & $1.000$ & $1.000$ & $1.000$ & $0.999$ & $1.000$ & $0.999$ & $1.000$ & $0.992$ & $1.000$ & $0.993$ \\
			&$10$ & $30$ & $0.010$ & $1$ & $1$ & $1$ & $1.000$ & $1$ & $0.998$ & $1$ & $0.998$ & $1$ & $0.973$ & $1$ & $0.970$ \\
			&$10$ & $30$ & $0.050$ & $1$ & $1$ & $1$ & $1$ & $1$ & $1.000$ & $1$ & $1.000$ & $1$ & $0.995$ & $1$ & $0.995$ \\
			&$10$ & $30$ & $0.100$ & $1$ & $1$ & $1$ & $1$ & $1$ & $1.000$ & $1$ & $1.000$ & $1$ & $0.997$ & $1$ & $0.997$ \\ \cline{2-16}
			&$30$ & $5$ & $0.010$ & $1$ & $1$ & $1$ & $1$ & $1$ & $1$ & $1$ & $1$ & $1$ & $1$ & $1$ & $1$ \\
			&$30$ & $5$ & $0.050$ & $1$ & $1$ & $1$ & $1$ & $1$ & $1$ & $1$ & $1$ & $1$ & $1$ & $1$ & $1$ \\
			&$30$ & $5$ & $0.100$ & $1$ & $1$ & $1$ & $1$ & $1$ & $1$ & $1$ & $1$ & $1$ & $1$ & $1$ & $1$ \\
			&$30$ & $30$ & $0.010$ & $1$ & $1$ & $1$ & $1$ & $1$ & $1$ & $1$ & $1$ & $1$ & $1$ & $1$ & $1$ \\
			&$30$ & $30$ & $0.050$ & $1$ & $1$ & $1$ & $1$ & $1$ & $1$ & $1$ & $1$ & $1$ & $1$ & $1$ & $1$ \\
			&$30$ & $30$ & $0.100$ & $1$ & $1$ & $1$ & $1$ & $1$ & $1$ & $1$ & $1$ & $1$ & $1$ & $1$ & $1$ \\ \cline{2-16}
			&$60$ & $5$ & $0.010$ & $1$ & $1$ & $1$ & $1$ & $1$ & $1$ & $1$ & $1$ & $1$ & $1$ & $1$ & $1$ \\
			&$60$ & $5$ & $0.050$ & $1$ & $1$ & $1$ & $1$ & $1$ & $1$ & $1$ & $1$ & $1$ & $1$ & $1$ & $1$ \\
			&$60$ & $5$ & $0.100$ & $1$ & $1$ & $1$ & $1$ & $1$ & $1$ & $1$ & $1$ & $1$ & $1$ & $1$ & $1$ \\
			&$60$ & $30$ & $0.010$ & $1$ & $1$ & $1$ & $1$ & $1$ & $1$ & $1$ & $1$ & $1$ & $1$ & $1$ & $1$ \\
			&$60$ & $30$ & $0.050$ & $1$ & $1$ & $1$ & $1$ & $1$ & $1$ & $1$ & $1$ & $1$ & $1$ & $1$ & $1$ \\
			&$60$ & $30$ & $0.100$ & $1$ & $1$ & $1$ & $1$ & $1$ & $1$ & $1$ & $1$ & $1$ & $1$ & $1$ & $1$ \\
			\hline \\[-1.8ex] 	
			\multirow{18}{*}{(iii)}&$10$ & $5$ & $0.010$ & $0.635$ & $0.497$ & $0.685$ & $0.537$ & $0.633$ & $0.238$ & $0.681$ & $0.260$ & $0.525$ & $0.138$ & $0.584$ & $0.135$ \\
			&$10$ & $5$ & $0.050$ & $0.831$ & $0.728$ & $0.848$ & $0.748$ & $0.824$ & $0.460$ & $0.844$ & $0.477$ & $0.740$ & $0.311$ & $0.768$ & $0.323$ \\
			&$10$ & $5$ & $0.100$ & $0.903$ & $0.830$ & $0.911$ & $0.835$ & $0.899$ & $0.597$ & $0.905$ & $0.604$ & $0.835$ & $0.440$ & $0.844$ & $0.446$ \\
			&$10$ & $30$ & $0.010$ & $0.790$ & $0.583$ & $0.784$ & $0.555$ & $0.789$ & $0.273$ & $0.785$ & $0.247$ & $0.763$ & $0.147$ & $0.761$ & $0.122$ \\
			&$10$ & $30$ & $0.050$ & $0.928$ & $0.800$ & $0.930$ & $0.797$ & $0.928$ & $0.490$ & $0.930$ & $0.486$ & $0.915$ & $0.331$ & $0.919$ & $0.324$ \\
			&$10$ & $30$ & $0.100$ & $0.966$ & $0.888$ & $0.964$ & $0.889$ & $0.965$ & $0.628$ & $0.964$ & $0.626$ & $0.960$ & $0.460$ & $0.957$ & $0.453$ \\
			&$30$ & $5$ & $0.010$ & $1$ & $1.000$ & $1$ & $1$ & $1$ & $0.985$ & $1$ & $0.989$ & $1.000$ & $0.890$ & $1.000$ & $0.898$ \\
			&$30$ & $5$ & $0.050$ & $1$ & $1$ & $1$ & $1$ & $1$ & $0.996$ & $1$ & $0.997$ & $1$ & $0.971$ & $1$ & $0.971$ \\
			&$30$ & $5$ & $0.100$ & $1$ & $1$ & $1$ & $1$ & $1$ & $0.999$ & $1$ & $0.999$ & $1$ & $0.984$ & $1$ & $0.984$ \\
			&$30$ & $30$ & $0.010$ & $1$ & $1$ & $1$ & $1$ & $1$ & $0.998$ & $1$ & $0.999$ & $1$ & $0.950$ & $1$ & $0.948$ \\
			&$30$ & $30$ & $0.050$ & $1$ & $1$ & $1$ & $1$ & $1$ & $1.000$ & $1$ & $1.000$ & $1$ & $0.990$ & $1$ & $0.990$ \\
			&$30$ & $30$ & $0.100$ & $1$ & $1$ & $1$ & $1$ & $1$ & $1.000$ & $1$ & $1.000$ & $1$ & $0.997$ & $1$ & $0.997$ \\
			&$60$ & $5$ & $0.010$ & $1$ & $1$ & $1$ & $1$ & $1$ & $1$ & $1$ & $1$ & $1$ & $1.000$ & $1$ & $1$ \\
			&$60$ & $5$ & $0.050$ & $1$ & $1$ & $1$ & $1$ & $1$ & $1$ & $1$ & $1$ & $1$ & $1$ & $1$ & $1$ \\
			&$60$ & $5$ & $0.100$ & $1$ & $1$ & $1$ & $1$ & $1$ & $1$ & $1$ & $1$ & $1$ & $1$ & $1$ & $1$ \\
			&$60$ & $30$ & $0.010$ & $1$ & $1$ & $1$ & $1$ & $1$ & $1$ & $1$ & $1$ & $1$ & $1$ & $1$ & $1$ \\
			&$60$ & $30$ & $0.050$ & $1$ & $1$ & $1$ & $1$ & $1$ & $1$ & $1$ & $1$ & $1$ & $1$ & $1$ & $1$ \\
			&$60$ & $30$ & $0.100$ & $1$ & $1$ & $1$ & $1$ & $1$ & $1$ & $1$ & $1$ & $1$ & $1$ & $1$ & $1$ \\
			\hline \\[-1.8ex] 	
	\end{tabular} }
\end{table}

\begin{table}[!htbp] \centering
	\caption{Power comparision under $H_{A_s}$ from Example \ref{exp3}}
	\label{tab3}
	\scalebox{0.6}{
		\begin{tabular}{@{\extracolsep{5pt}} cccccccccccccccc}
			\\[-1.8ex]\hline \hline \\[-1.8ex]
			&& & & \multicolumn{4}{c}{} & \multicolumn{4}{c}{Gaussian Kernel} & \multicolumn{4}{c}{Laplacian Kernel} \\ \cline{9-12} \cline{13-16}
			&$n$ & $p$ & $\alpha$ & $dCov$ & $mdCov$ & $T_{dCov}$ & $T_{mdCov}$ & $hCov$ & $mhCov$ & $T_{hCov}$ & $T_{mhCov}$ & $hCov$ & $mhCov$ & $T_{hCov}$ & $T_{mhCov}$  \\
			\hline \\[-1.8ex]
			\multirow{18}{*}{(i)}& $10$ & $5$ & $0.010$ & $0.113$ & $0.285$ & $0.144$ & $0.321$ & $0.110$ & $0.493$ & $0.138$ & $0.516$ & $0.172$ & $0.801$ & $0.226$ & $0.813$ \\
			& $10$ & $5$ & $0.050$ & $0.231$ & $0.495$ & $0.254$ & $0.519$ & $0.236$ & $0.724$ & $0.256$ & $0.736$ & $0.356$ & $0.927$ & $0.398$ & $0.938$ \\
			& $10$ & $5$ & $0.100$ & $0.325$ & $0.618$ & $0.332$ & $0.628$ & $0.325$ & $0.828$ & $0.336$ & $0.834$ & $0.495$ & $0.968$ & $0.506$ & $0.969$ \\
			& $10$ & $30$ & $0.010$ & $0.032$ & $0.286$ & $0.028$ & $0.267$ & $0.032$ & $0.543$ & $0.030$ & $0.513$ & $0.044$ & $0.848$ & $0.041$ & $0.838$ \\
			& $10$ & $30$ & $0.050$ & $0.101$ & $0.526$ & $0.098$ & $0.523$ & $0.098$ & $0.769$ & $0.099$ & $0.763$ & $0.124$ & $0.945$ & $0.128$ & $0.947$ \\
			& $10$ & $30$ & $0.100$ & $0.158$ & $0.669$ & $0.162$ & $0.666$ & $0.160$ & $0.858$ & $0.160$ & $0.858$ & $0.203$ & $0.978$ & $0.205$ & $0.977$ \\ \cline{2-16}
			& $30$ & $5$ & $0.010$ & $0.440$ & $0.997$ & $0.499$ & $0.999$ & $0.518$ & $1$ & $0.583$ & $1$ & $0.924$ & $1$ & $0.956$ & $1$ \\
			& $30$ & $5$ & $0.050$ & $0.651$ & $1.000$ & $0.679$ & $1.000$ & $0.741$ & $1$ & $0.768$ & $1$ & $0.987$ & $1$ & $0.988$ & $1$ \\
			& $30$ & $5$ & $0.100$ & $0.766$ & $1.000$ & $0.773$ & $1$ & $0.836$ & $1$ & $0.845$ & $1$ & $0.994$ & $1$ & $0.995$ & $1$ \\
			& $30$ & $30$ & $0.010$ & $0.084$ & $1.000$ & $0.082$ & $1.000$ & $0.085$ & $1$ & $0.082$ & $1$ & $0.194$ & $1$ & $0.192$ & $1$ \\
			& $30$ & $30$ & $0.050$ & $0.190$ & $1$ & $0.187$ & $1$ & $0.192$ & $1$ & $0.192$ & $1$ & $0.365$ & $1$ & $0.365$ & $1$ \\
			& $30$ & $30$ & $0.100$ & $0.275$ & $1$ & $0.272$ & $1$ & $0.280$ & $1$ & $0.276$ & $1$ & $0.476$ & $1$ & $0.478$ & $1$ \\ \cline{2-16}
			& $60$ & $5$ & $0.010$ & $0.948$ & $1$ & $0.976$ & $1$ & $0.983$ & $1$ & $0.992$ & $1$ & $1$ & $1$ & $1$ & $1$ \\
			& $60$ & $5$ & $0.050$ & $0.994$ & $1$ & $0.996$ & $1$ & $0.998$ & $1$ & $0.999$ & $1$ & $1$ & $1$ & $1$ & $1$ \\
			& $60$ & $5$ & $0.100$ & $0.999$ & $1$ & $0.999$ & $1$ & $1.000$ & $1$ & $1.000$ & $1$ & $1$ & $1$ & $1$ & $1$ \\
			& $60$ & $30$ & $0.010$ & $0.185$ & $1$ & $0.173$ & $1$ & $0.194$ & $1$ & $0.183$ & $1$ & $0.587$ & $1$ & $0.587$ & $1$ \\
			& $60$ & $30$ & $0.050$ & $0.346$ & $1$ & $0.346$ & $1$ & $0.361$ & $1$ & $0.360$ & $1$ & $0.779$ & $1$ & $0.782$ & $1$ \\
			& $60$ & $30$ & $0.100$ & $0.462$ & $1$ & $0.459$ & $1$ & $0.475$ & $1$ & $0.473$ & $1$ & $0.861$ & $1$ & $0.864$ & $1$ \\
			\hline \\[-1.8ex]
			\multirow{18}{*}{(ii)} & $10$ & $5$ & $0.010$ & $0.167$ & $0.232$ & $0.237$ & $0.296$ & $0.192$ & $0.347$ & $0.263$ & $0.410$ & $0.279$ & $0.595$ & $0.391$ & $0.652$ \\
			& $10$ & $5$ & $0.050$ & $0.306$ & $0.386$ & $0.341$ & $0.421$ & $0.356$ & $0.570$ & $0.401$ & $0.606$ & $0.525$ & $0.806$ & $0.584$ & $0.832$ \\
			& $10$ & $5$ & $0.100$ & $0.401$ & $0.489$ & $0.409$ & $0.500$ & $0.479$ & $0.699$ & $0.487$ & $0.709$ & $0.674$ & $0.892$ & $0.689$ & $0.901$ \\
			& $10$ & $30$ & $0.010$ & $0.080$ & $0.202$ & $0.091$ & $0.210$ & $0.082$ & $0.376$ & $0.091$ & $0.366$ & $0.099$ & $0.646$ & $0.123$ & $0.634$ \\
			& $10$ & $30$ & $0.050$ & $0.178$ & $0.369$ & $0.191$ & $0.378$ & $0.179$ & $0.605$ & $0.192$ & $0.610$ & $0.229$ & $0.834$ & $0.252$ & $0.837$ \\
			& $10$ & $30$ & $0.100$ & $0.257$ & $0.492$ & $0.259$ & $0.492$ & $0.264$ & $0.728$ & $0.265$ & $0.730$ & $0.342$ & $0.906$ & $0.351$ & $0.909$ \\ \cline{2-16}
			& $30$ & $5$ & $0.010$ & $0.623$ & $0.847$ & $0.781$ & $0.950$ & $0.895$ & $0.999$ & $0.957$ & $1$ & $0.995$ & $1$ & $0.999$ & $1$ \\
			& $30$ & $5$ & $0.050$ & $0.872$ & $0.984$ & $0.902$ & $0.990$ & $0.982$ & $1$ & $0.990$ & $1$ & $1.000$ & $1$ & $1$ & $1$ \\
			& $30$ & $5$ & $0.100$ & $0.940$ & $0.996$ & $0.945$ & $0.995$ & $0.994$ & $1$ & $0.994$ & $1$ & $1$ & $1$ & $1$ & $1$ \\
			& $30$ & $30$ & $0.010$ & $0.251$ & $0.929$ & $0.277$ & $0.944$ & $0.307$ & $1$ & $0.336$ & $1$ & $0.629$ & $1$ & $0.686$ & $1$ \\
			& $30$ & $30$ & $0.050$ & $0.419$ & $0.982$ & $0.434$ & $0.985$ & $0.499$ & $1$ & $0.517$ & $1$ & $0.830$ & $1$ & $0.849$ & $1$ \\
			& $30$ & $30$ & $0.100$ & $0.532$ & $0.995$ & $0.532$ & $0.995$ & $0.613$ & $1$ & $0.622$ & $1$ & $0.905$ & $1$ & $0.909$ & $1$ \\ \cline{2-16}
			& $60$ & $5$ & $0.010$ & $0.999$ & $1$ & $1$ & $1$ & $1$ & $1$ & $1$ & $1$ & $1$ & $1$ & $1$ & $1$ \\
			& $60$ & $5$ & $0.050$ & $1$ & $1$ & $1$ & $1$ & $1$ & $1$ & $1$ & $1$ & $1$ & $1$ & $1$ & $1$ \\
			& $60$ & $5$ & $0.100$ & $1$ & $1$ & $1$ & $1$ & $1$ & $1$ & $1$ & $1$ & $1$ & $1$ & $1$ & $1$ \\
			& $60$ & $30$ & $0.010$ & $0.643$ & $1$ & $0.684$ & $1$ & $0.790$ & $1$ & $0.833$ & $1$ & $0.996$ & $1$ & $0.999$ & $1$ \\
			& $60$ & $30$ & $0.050$ & $0.824$ & $1$ & $0.836$ & $1$ & $0.918$ & $1$ & $0.930$ & $1$ & $1.000$ & $1$ & $1.000$ & $1$ \\
			& $60$ & $30$ & $0.100$ & $0.894$ & $1$ & $0.896$ & $1$ & $0.955$ & $1$ & $0.958$ & $1$ & $1$ & $1$ & $1$ & $1$ \\
			\hline \\[-1.8ex] 	
			\multirow{18}{*}{(iii)} & $10$ & $5$ & $0.010$ & $0.043$ & $0.233$ & $0.060$ & $0.257$ & $0.042$ & $0.434$ & $0.053$ & $0.447$ & $0.076$ & $0.768$ & $0.098$ & $0.785$ \\
			& $10$ & $5$ & $0.050$ & $0.121$ & $0.466$ & $0.141$ & $0.490$ & $0.119$ & $0.680$ & $0.137$ & $0.698$ & $0.191$ & $0.924$ & $0.214$ & $0.927$ \\
			& $10$ & $5$ & $0.100$ & $0.201$ & $0.616$ & $0.212$ & $0.624$ & $0.203$ & $0.808$ & $0.210$ & $0.810$ & $0.291$ & $0.963$ & $0.298$ & $0.964$ \\
			& $10$ & $30$ & $0.010$ & $0.017$ & $0.260$ & $0.013$ & $0.242$ & $0.017$ & $0.482$ & $0.012$ & $0.445$ & $0.021$ & $0.830$ & $0.017$ & $0.811$ \\
			& $10$ & $30$ & $0.050$ & $0.062$ & $0.488$ & $0.062$ & $0.487$ & $0.063$ & $0.729$ & $0.062$ & $0.727$ & $0.071$ & $0.941$ & $0.070$ & $0.940$ \\
			& $10$ & $30$ & $0.100$ & $0.120$ & $0.632$ & $0.116$ & $0.630$ & $0.118$ & $0.837$ & $0.115$ & $0.836$ & $0.131$ & $0.972$ & $0.130$ & $0.975$ \\ \cline{2-16}
			& $30$ & $5$ & $0.010$ & $0.146$ & $0.999$ & $0.191$ & $1$ & $0.153$ & $1$ & $0.187$ & $1$ & $0.464$ & $1$ & $0.529$ & $1$ \\
			& $30$ & $5$ & $0.050$ & $0.346$ & $1$ & $0.375$ & $1$ & $0.347$ & $1$ & $0.380$ & $1$ & $0.723$ & $1$ & $0.747$ & $1$ \\
			& $30$ & $5$ & $0.100$ & $0.484$ & $1$ & $0.497$ & $1$ & $0.496$ & $1$ & $0.501$ & $1$ & $0.835$ & $1$ & $0.840$ & $1$ \\
			& $30$ & $30$ & $0.010$ & $0.024$ & $1.000$ & $0.022$ & $1.000$ & $0.026$ & $1$ & $0.022$ & $1$ & $0.038$ & $1$ & $0.037$ & $1$ \\
			& $30$ & $30$ & $0.050$ & $0.088$ & $1$ & $0.085$ & $1$ & $0.086$ & $1$ & $0.085$ & $1$ & $0.117$ & $1$ & $0.115$ & $1$ \\
			& $30$ & $30$ & $0.100$ & $0.149$ & $1$ & $0.147$ & $1$ & $0.148$ & $1$ & $0.144$ & $1$ & $0.195$ & $1$ & $0.193$ & $1$ \\  \cline{2-16}
			& $60$ & $5$ & $0.010$ & $0.547$ & $1$ & $0.630$ & $1$ & $0.566$ & $1$ & $0.642$ & $1$ & $0.978$ & $1$ & $0.988$ & $1$ \\
			& $60$ & $5$ & $0.050$ & $0.802$ & $1$ & $0.835$ & $1$ & $0.808$ & $1$ & $0.836$ & $1$ & $0.997$ & $1$ & $0.998$ & $1$ \\
			& $60$ & $5$ & $0.100$ & $0.907$ & $1$ & $0.911$ & $1$ & $0.905$ & $1$ & $0.913$ & $1$ & $0.999$ & $1$ & $0.999$ & $1$ \\
			& $60$ & $30$ & $0.010$ & $0.038$ & $1$ & $0.030$ & $1$ & $0.038$ & $1$ & $0.029$ & $1$ & $0.089$ & $1$ & $0.080$ & $1$ \\
			& $60$ & $30$ & $0.050$ & $0.122$ & $1$ & $0.117$ & $1$ & $0.119$ & $1$ & $0.119$ & $1$ & $0.217$ & $1$ & $0.214$ & $1$ \\
			& $60$ & $30$ & $0.100$ & $0.198$ & $1$ & $0.196$ & $1$ & $0.199$ & $1$ & $0.197$ & $1$ & $0.326$ & $1$ & $0.325$ & $1$ \\
			\hline \\[-1.8ex]
	\end{tabular} }
\end{table}

\begin{table}[!htbp] \centering
	\caption{Power comparision under $H_{A_s}$ from Example \ref{exp4}}
	\label{tab4}
	\scalebox{0.6}{
		\begin{tabular}{@{\extracolsep{5pt}} cccccccccccccccc}
			\\[-1.8ex]\hline \hline \\[-1.8ex]
			&& & & \multicolumn{4}{c}{} & \multicolumn{4}{c}{Gaussian Kernel} & \multicolumn{4}{c}{Laplacian Kernel} \\ \cline{9-12} \cline{13-16}
			&$n$ & $p$ & $\alpha$ & $dCov$ & $mdCov$ & $T_{dCov}$ & $T_{mdCov}$ & $hCov$ & $mhCov$ & $T_{hCov}$ & $T_{mhCov}$ & $hCov$ & $mhCov$ & $T_{hCov}$ & $T_{mhCov}$  \\
			\hline \\[-1.8ex]
			\multirow{18}{*}{(i)} & $10$ & $5$ & $0.010$ & $0.044$ & $0.196$ & $0.055$ & $0.218$ & $0.042$ & $0.348$ & $0.052$ & $0.367$ & $0.074$ & $0.672$ & $0.098$ & $0.685$ \\
			& $10$ & $5$ & $0.050$ & $0.120$ & $0.390$ & $0.136$ & $0.416$ & $0.114$ & $0.582$ & $0.129$ & $0.604$ & $0.183$ & $0.859$ & $0.209$ & $0.870$ \\
			& $10$ & $5$ & $0.100$ & $0.201$ & $0.542$ & $0.209$ & $0.546$ & $0.191$ & $0.722$ & $0.197$ & $0.731$ & $0.292$ & $0.927$ & $0.304$ & $0.931$ \\
			& $10$ & $30$ & $0.010$ & $0.018$ & $0.212$ & $0.014$ & $0.194$ & $0.017$ & $0.387$ & $0.014$ & $0.362$ & $0.022$ & $0.722$ & $0.017$ & $0.706$ \\
			& $10$ & $30$ & $0.050$ & $0.066$ & $0.434$ & $0.064$ & $0.428$ & $0.066$ & $0.627$ & $0.064$ & $0.625$ & $0.075$ & $0.892$ & $0.077$ & $0.891$ \\
			& $10$ & $30$ & $0.100$ & $0.123$ & $0.571$ & $0.121$ & $0.568$ & $0.123$ & $0.749$ & $0.119$ & $0.750$ & $0.135$ & $0.944$ & $0.132$ & $0.946$ \\ \cline{2-16}
			& $30$ & $5$ & $0.010$ & $0.158$ & $0.988$ & $0.197$ & $0.996$ & $0.136$ & $1$ & $0.163$ & $1$ & $0.486$ & $1$ & $0.555$ & $1$ \\
			& $30$ & $5$ & $0.050$ & $0.341$ & $1.000$ & $0.369$ & $1$ & $0.303$ & $1$ & $0.328$ & $1$ & $0.725$ & $1$ & $0.756$ & $1$ \\
			& $30$ & $5$ & $0.100$ & $0.483$ & $1$ & $0.488$ & $1$ & $0.433$ & $1$ & $0.444$ & $1$ & $0.838$ & $1$ & $0.846$ & $1$ \\
			& $30$ & $30$ & $0.010$ & $0.026$ & $0.996$ & $0.023$ & $0.996$ & $0.027$ & $1.000$ & $0.022$ & $1.000$ & $0.043$ & $1$ & $0.038$ & $1$ \\
			& $30$ & $30$ & $0.050$ & $0.089$ & $1.000$ & $0.084$ & $0.999$ & $0.088$ & $1$ & $0.083$ & $1$ & $0.123$ & $1$ & $0.125$ & $1$ \\
			& $30$ & $30$ & $0.100$ & $0.153$ & $1.000$ & $0.152$ & $1.000$ & $0.151$ & $1$ & $0.152$ & $1$ & $0.209$ & $1$ & $0.204$ & $1$ \\ \cline{2-16}
			& $60$ & $5$ & $0.010$ & $0.559$ & $1$ & $0.637$ & $1$ & $0.461$ & $1$ & $0.539$ & $1$ & $0.989$ & $1$ & $0.996$ & $1$ \\
			& $60$ & $5$ & $0.050$ & $0.816$ & $1$ & $0.847$ & $1$ & $0.738$ & $1$ & $0.774$ & $1$ & $1.000$ & $1$ & $1$ & $1$ \\
			& $60$ & $5$ & $0.100$ & $0.916$ & $1$ & $0.925$ & $1$ & $0.861$ & $1$ & $0.870$ & $1$ & $1$ & $1$ & $1$ & $1$ \\
			& $60$ & $30$ & $0.010$ & $0.037$ & $1$ & $0.032$ & $1$ & $0.036$ & $1$ & $0.031$ & $1$ & $0.091$ & $1$ & $0.085$ & $1$ \\
			& $60$ & $30$ & $0.050$ & $0.125$ & $1$ & $0.119$ & $1$ & $0.122$ & $1$ & $0.115$ & $1$ & $0.231$ & $1$ & $0.228$ & $1$ \\
			& $60$ & $30$ & $0.100$ & $0.208$ & $1$ & $0.207$ & $1$ & $0.204$ & $1$ & $0.202$ & $1$ & $0.350$ & $1$ & $0.346$ & $1$ \\
			\hline \\[-1.8ex]
			\multirow{18}{*}{(ii)} & $10$ & $5$ & $0.010$ & $0.044$ & $0.217$ & $0.059$ & $0.242$ & $0.040$ & $0.393$ & $0.055$ & $0.413$ & $0.077$ & $0.713$ & $0.106$ & $0.732$ \\
			&$10$ & $5$ & $0.050$ & $0.124$ & $0.432$ & $0.141$ & $0.453$ & $0.117$ & $0.637$ & $0.131$ & $0.655$ & $0.202$ & $0.886$ & $0.224$ & $0.895$ \\
			&$10$ & $5$ & $0.100$ & $0.210$ & $0.577$ & $0.213$ & $0.583$ & $0.196$ & $0.771$ & $0.204$ & $0.775$ & $0.304$ & $0.942$ & $0.318$ & $0.942$ \\
			&$10$ & $30$ & $0.010$ & $0.020$ & $0.247$ & $0.013$ & $0.224$ & $0.019$ & $0.439$ & $0.013$ & $0.409$ & $0.022$ & $0.774$ & $0.018$ & $0.763$ \\
			&$10$ & $30$ & $0.050$ & $0.064$ & $0.474$ & $0.064$ & $0.474$ & $0.063$ & $0.677$ & $0.063$ & $0.676$ & $0.075$ & $0.913$ & $0.076$ & $0.913$ \\
			&$10$ & $30$ & $0.100$ & $0.126$ & $0.606$ & $0.125$ & $0.604$ & $0.126$ & $0.795$ & $0.126$ & $0.790$ & $0.141$ & $0.956$ & $0.138$ & $0.955$ \\ \cline{2-16}
			&$30$ & $5$ & $0.010$ & $0.178$ & $0.995$ & $0.221$ & $0.999$ & $0.148$ & $1$ & $0.186$ & $1$ & $0.544$ & $1$ & $0.608$ & $1$ \\
			&$30$ & $5$ & $0.050$ & $0.376$ & $1$ & $0.409$ & $1$ & $0.333$ & $1$ & $0.358$ & $1$ & $0.775$ & $1$ & $0.797$ & $1$ \\
			&$30$ & $5$ & $0.100$ & $0.518$ & $1$ & $0.526$ & $1$ & $0.468$ & $1$ & $0.478$ & $1$ & $0.871$ & $1$ & $0.880$ & $1$ \\
			&$30$ & $30$ & $0.010$ & $0.027$ & $0.998$ & $0.023$ & $0.998$ & $0.026$ & $1.000$ & $0.022$ & $1$ & $0.043$ & $1$ & $0.038$ & $1$ \\
			&$30$ & $30$ & $0.050$ & $0.088$ & $1.000$ & $0.087$ & $1.000$ & $0.088$ & $1$ & $0.086$ & $1$ & $0.128$ & $1$ & $0.128$ & $1$ \\
			&$30$ & $30$ & $0.100$ & $0.155$ & $1.000$ & $0.152$ & $1.000$ & $0.154$ & $1$ & $0.152$ & $1$ & $0.218$ & $1$ & $0.213$ & $1$ \\ \cline{2-16}
			&$60$ & $5$ & $0.010$ & $0.632$ & $1$ & $0.709$ & $1$ & $0.526$ & $1$ & $0.609$ & $1$ & $0.995$ & $1$ & $0.999$ & $1$ \\
			&$60$ & $5$ & $0.050$ & $0.870$ & $1$ & $0.895$ & $1$ & $0.792$ & $1$ & $0.826$ & $1$ & $1$ & $1$ & $1$ & $1$ \\
			&$60$ & $5$ & $0.100$ & $0.946$ & $1$ & $0.952$ & $1$ & $0.904$ & $1$ & $0.911$ & $1$ & $1$ & $1$ & $1$ & $1$ \\
			&$60$ & $30$ & $0.010$ & $0.044$ & $1$ & $0.037$ & $1$ & $0.043$ & $1$ & $0.036$ & $1$ & $0.105$ & $1$ & $0.096$ & $1$ \\
			&$60$ & $30$ & $0.050$ & $0.126$ & $1$ & $0.125$ & $1$ & $0.123$ & $1$ & $0.121$ & $1$ & $0.251$ & $1$ & $0.244$ & $1$ \\
			&$60$ & $30$ & $0.100$ & $0.213$ & $1$ & $0.211$ & $1$ & $0.211$ & $1$ & $0.206$ & $1$ & $0.368$ & $1$ & $0.366$ & $1$ \\
			\hline \\[-1.8ex] 	
			\multirow{18}{*}{(iii)} & $10$ & $5$ & $0.010$ & $0.019$ & $0.024$ & $0.023$ & $0.028$ & $0.017$ & $0.033$ & $0.022$ & $0.040$ & $0.023$ & $0.090$ & $0.029$ & $0.095$ \\
			& $10$ & $5$ & $0.050$ & $0.058$ & $0.079$ & $0.068$ & $0.089$ & $0.057$ & $0.111$ & $0.067$ & $0.115$ & $0.068$ & $0.232$ & $0.081$ & $0.242$ \\
			& $10$ & $5$ & $0.100$ & $0.113$ & $0.148$ & $0.117$ & $0.151$ & $0.114$ & $0.194$ & $0.118$ & $0.196$ & $0.124$ & $0.351$ & $0.129$ & $0.355$ \\
			& $10$ & $30$ & $0.010$ & $0.016$ & $0.026$ & $0.012$ & $0.020$ & $0.016$ & $0.037$ & $0.012$ & $0.030$ & $0.017$ & $0.089$ & $0.013$ & $0.076$ \\
			& $10$ & $30$ & $0.050$ & $0.059$ & $0.086$ & $0.057$ & $0.083$ & $0.060$ & $0.112$ & $0.058$ & $0.105$ & $0.061$ & $0.233$ & $0.060$ & $0.225$ \\
			& $10$ & $30$ & $0.100$ & $0.111$ & $0.156$ & $0.108$ & $0.153$ & $0.112$ & $0.199$ & $0.108$ & $0.193$ & $0.112$ & $0.357$ & $0.109$ & $0.346$ \\ \cline{2-16}
			& $30$ & $5$ & $0.010$ & $0.019$ & $0.051$ & $0.021$ & $0.068$ & $0.017$ & $0.141$ & $0.021$ & $0.170$ & $0.026$ & $0.673$ & $0.032$ & $0.724$ \\
			& $30$ & $5$ & $0.050$ & $0.061$ & $0.166$ & $0.070$ & $0.188$ & $0.058$ & $0.339$ & $0.066$ & $0.360$ & $0.083$ & $0.889$ & $0.091$ & $0.903$ \\
			& $30$ & $5$ & $0.100$ & $0.117$ & $0.283$ & $0.117$ & $0.288$ & $0.117$ & $0.488$ & $0.116$ & $0.497$ & $0.153$ & $0.953$ & $0.153$ & $0.955$ \\
			& $30$ & $30$ & $0.010$ & $0.017$ & $0.074$ & $0.012$ & $0.065$ & $0.017$ & $0.182$ & $0.012$ & $0.165$ & $0.017$ & $0.754$ & $0.012$ & $0.742$ \\
			& $30$ & $30$ & $0.050$ & $0.061$ & $0.202$ & $0.058$ & $0.198$ & $0.061$ & $0.378$ & $0.059$ & $0.373$ & $0.063$ & $0.913$ & $0.061$ & $0.913$ \\
			& $30$ & $30$ & $0.100$ & $0.112$ & $0.309$ & $0.110$ & $0.307$ & $0.113$ & $0.518$ & $0.110$ & $0.517$ & $0.117$ & $0.960$ & $0.114$ & $0.959$ \\ \cline{2-16}
			& $60$ & $5$ & $0.010$ & $0.019$ & $0.174$ & $0.024$ & $0.219$ & $0.017$ & $0.580$ & $0.022$ & $0.666$ & $0.034$ & $1.000$ & $0.041$ & $1$ \\
			& $60$ & $5$ & $0.050$ & $0.066$ & $0.421$ & $0.073$ & $0.458$ & $0.061$ & $0.853$ & $0.069$ & $0.883$ & $0.108$ & $1$ & $0.119$ & $1$ \\
			& $60$ & $5$ & $0.100$ & $0.123$ & $0.600$ & $0.128$ & $0.612$ & $0.119$ & $0.941$ & $0.122$ & $0.949$ & $0.179$ & $1$ & $0.183$ & $1$ \\
			& $60$ & $30$ & $0.010$ & $0.013$ & $0.251$ & $0.009$ & $0.233$ & $0.013$ & $0.680$ & $0.010$ & $0.665$ & $0.014$ & $1.000$ & $0.010$ & $1$ \\
			& $60$ & $30$ & $0.050$ & $0.053$ & $0.485$ & $0.051$ & $0.484$ & $0.052$ & $0.869$ & $0.050$ & $0.871$ & $0.056$ & $1$ & $0.055$ & $1$ \\
			& $60$ & $30$ & $0.100$ & $0.105$ & $0.620$ & $0.101$ & $0.619$ & $0.106$ & $0.930$ & $0.101$ & $0.929$ & $0.107$ & $1$ & $0.106$ & $1$ \\
			\hline \\[-1.8ex]
	\end{tabular} }
\end{table}

\section{Technical Details}

\subsection{Proof of Proposition \ref{prop:taylor}}

\begin{proof}
Denote $f^{(2)}(t) = - \frac{1}{4} (1+t)^{-\frac{3}{2}}$. The remainder term can be written as
\begin{align*}
R_{X}(X_{s},X_{t}) = \int_{0}^1 \int_{0}^{1} v f^{(2)} \left(  uv L_{X}(X_s, X_t) \right) dudv  \times \left( L_{X}(X_s, X_t) \right)^2.
\end{align*}
Set $ \varphi (x) = \int_{0}^1 \int_{0}^{1} v f^{(2)} \left( uv x \right) dudv $. Then $\varphi (x)$ is continuous at $0$. Next, by the continuous mapping theorem, we have
\begin{align*}
\int_{0}^1 \int_{0}^{1} v f^{(2)} \left(  uv L_{X}(X_s, X_t) \right) dudv \overset{p}{\rightarrow}
\int_{0}^1 \int_{0}^{1} v f^{(2)} \left( 0 \right) dudv.
\end{align*}
So, $ R_{X}(X_{s},X_{t})  \asymp_p  \left( L_{X}(X_s, X_t)  \right)^2 $. Similar arguments hold for $R_{Y}(Y_{s},Y_{t})$.
\end{proof}

\subsection{Proof of Remark \ref{remark:mainthm}}
\label{App:proofRemark}
\begin{proof}
	
	(i) Notice that
	\begin{align*}
	& \sqrt{var[ L(X_{k}, X_{l}) ]} \\
	 \asymp & \sqrt{ \frac{ var[ (X_{k} - X_{l})^{T}( X_{k} -X_{l} ) ] }{p^{2}} } \\
	 = & \sqrt{ \frac{ var \{[ \mathbf{A} (U_{k} - U_{l}) + (\Phi_{k} - \Phi_{l}) ]^{T}[ \mathbf{A} (U_{k} - U_{l}) + (\Phi_{k} - \Phi_{l}) ] \}  }{p^{2}} }.
	\end{align*}
	Denote $\mathbf{C} = (c_{ij}) = \mathbf{A}^{T} \mathbf{A}$. We obtain that
	\begin{align*}
	& var \{[ \mathbf{A} (U_{k} - U_{l}) + (\Phi_{k} - \Phi_{l}) ]^{T}[ \mathbf{A} (U_{k} - U_{l}) + (\Phi_{k} - \Phi_{l}) ] \} \\
	= & var \big[ (U_{k} - U_{l})^{T} \mathbf{A}^{T} \mathbf{A} (U_{k} - U_{l}) \\
	& \quad \quad +  (\Phi_{k} - \Phi_{l})^{T}(\Phi_{k} - \Phi_{l}) +2 (U_{k} - U_{l})^{T}\mathbf{A}^{T}(\Phi_{k} - \Phi_{l}) \big] \\
	= & var \bigg[ \sum\limits_{i=1}^{s_{1}} \sum\limits_{j=1}^{s_{1}} c_{ij} (u_{ki} - u_{li})(u_{kj} - u_{lj}) + \sum\limits_{i=1}^{p}( \phi_{ki} - \phi_{li} )^{2} \\
	& \quad\quad + 2 \sum\limits_{i=1}^{s_{1}} \sum\limits_{j=1}^{p} a_{ji} (u_{ki} - u_{li})( \phi_{kj} - \phi_{lj} )  \bigg] \\
	\leq & 2 \sum\limits_{i=1}^{s_{1}} \sum\limits_{j=1}^{s_{1}} c_{ij}^{2} var[  (u_{ki} - u_{li})(u_{kj} - u_{lj})] + \sum\limits_{i=1}^{p} var [ ( \phi_{ki} - \phi_{li} )^{2}] \\
	& \quad\quad + 4 \sum\limits_{i=1}^{s_{1}} \sum\limits_{j=1}^{p} a_{ji}^{2} var[ (u_{ki} - u_{li})( \phi_{kj} - \phi_{lj} )].
	\end{align*}
	Since the 4th moment is bounded uniformly for each $u_{ki}$ and $\phi_{ki}$, $ var[  (u_{ki} - u_{li})(u_{kj} - u_{lj})]$, $var [ ( \phi_{ki} - \phi_{li} )^{2}]$ and $var[ (u_{ki} - u_{li})( \phi_{kj} - \phi_{lj} )]$ are all uniformly bounded by a constant. As $\|A\|_F^2=O(p^{1/2})$,
we have $\|A^TA\|_F^2=O(p)$ by the Cauchy-Schwarz inequality. It follows that
	\begin{align*}
	var \{[ \mathbf{A} (U_{k} - U_{l}) + (\Phi_{k} - \Phi_{l}) ]^{T}[ \mathbf{A} (U_{k} - U_{l}) + (\Phi_{k} - \Phi_{l}) ] \} = O(p).
	\end{align*}
	Thus, we have $a_{p} = 1/ \sqrt{p} \text{ and } b_q = 1/ \sqrt{q}.$
\end{proof}

%

\subsection{Proof of Theorem \ref{thm:decomp}}
\label{proof:decomp}
\begin{proof}
	(i) Recall that
	$
	dCov_n^2(\mathbf{X},\mathbf{Y}) = (\widetilde{\mathbf{A}} \cdot \widetilde{\mathbf{B}}  ).
	$
	Using the approximation of $b_{st}$ in Proposition \ref{prop:taylor}, we have
	\begin{align*}
	\frac{1}{\tau_X}\widetilde{\mathbf{A}} = \widetilde{\mathbf{1}}_{n \times n} + \frac{1}{2} \widetilde{ \mathbf L}_X + \widetilde{ \mathbf R}_X = \frac{1}{2} \widetilde{ \mathbf L}_X + \widetilde{ \mathbf R}_X,
	\end{align*}
	where $\mathbf{L}_X= (L_{X}(X_{s}, X_{t} ))_{s,t=1}^n$ and $\mathbf{R}_X= (R_{X}(X_{s}, X_{t} ))_{s,t=1}^n$. Similarly, $\frac{1}{\tau_Y} \widetilde{\mathbf{B}} = \frac{1}{2}\widetilde{ \mathbf L}_Y + \widetilde{ \mathbf R}_Y $. Then, we have
	\begin{align*}
	\frac{ dCov_n^2(\mathbf{X},\mathbf{Y})}{\tau} & =( (\frac{1}{2}\widetilde{ \mathbf L}_X + \widetilde{ \mathbf R}_X) \cdot ( \frac{1}{2}\widetilde{ \mathbf L}_Y + \widetilde{ \mathbf R}_Y)) \\
	& = \frac{1}{4} (\widetilde{ \mathbf L}_X \cdot \widetilde{ \mathbf L}_Y  ) + \frac{1}{2} (\widetilde{ \mathbf L}_X \cdot \widetilde{ \mathbf R}_Y  ) + \frac{1}{2} (\widetilde{ \mathbf R}_X \cdot \widetilde{ \mathbf L}_Y  ) + (\widetilde{ \mathbf R}_X \cdot \widetilde{ \mathbf R}_Y  ).
	\end{align*}
	Let $R_n = \frac{1}{2} (\widetilde{ \mathbf L}_X \cdot \widetilde{ \mathbf R}_Y  ) + \frac{1}{2} (\widetilde{ \mathbf R}_X \cdot \widetilde{ \mathbf L}_Y  ) + (\widetilde{ \mathbf R}_X \cdot \widetilde{ \mathbf R}_Y  )$. We show that $\frac{1}{4} (\widetilde{ \mathbf L}_X \cdot \widetilde{ \mathbf L}_Y  )$ can be written as sum of sample component-wise cross-covariances up to a constant factor in the following Lemma.
	\begin{lemma}
		\label{PropEquality}
		\begin{align*}
		 \frac{1}{4}(\widetilde{ \mathbf L}_X \cdot \widetilde{ \mathbf L}_Y  ) = \frac{1}{\tau^2} \sum\limits_{i=1}^{p} \sum\limits_{j=1}^{q} \text{cov}_{n}^2 (x_{i}, y_{j}) .
		\end{align*}
	\end{lemma}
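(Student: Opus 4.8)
The plan is to exploit the linearity of $\mathcal{U}$-centering, the fact that it kills the additive part of the squared distance, and finally a U-statistic representation of the $\mathcal{U}$-centered inner product. First I would expand the squared distance coordinatewise: for $s\neq t$,
\begin{align*}
|X_s-X_t|^2 = \sum_{i=1}^p(x_{si}-x_{ti})^2 = \big(|X_s|^2+|X_t|^2\big) - 2\sum_{i=1}^p x_{si}x_{ti},
\end{align*}
so that, recalling $L_X(X_s,X_t)=(|X_s-X_t|^2-\tau_X^2)/\tau_X^2$, the matrix $\mathbf L_X$ (taken with zero diagonal) splits into a symmetric matrix with off-diagonal entries $(|X_s|^2+|X_t|^2-\tau_X^2)/\tau_X^2$, which are of the additive form $u_s+u_t$, plus the bilinear part $-\tfrac{2}{\tau_X^2}\mathbf G_X$ with $\mathbf G_X=(\langle X_s,X_t\rangle)_{s\neq t}$. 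A direct computation shows that $\mathcal{U}$-centering annihilates every symmetric zero-diagonal matrix whose off-diagonal entries are of the form $u_s+u_t$ (in particular the off-diagonal all-ones matrix), whence
\begin{align*}
\widetilde{\mathbf L}_X = -\frac{2}{\tau_X^2}\widetilde{\mathbf G}_X = -\frac{2}{\tau_X^2}\sum_{i=1}^p\widetilde{\mathbf P}^{(i)}, \qquad \widetilde{\mathbf L}_Y = -\frac{2}{\tau_Y^2}\sum_{j=1}^q\widetilde{\mathbf Q}^{(j)},
\end{align*}
where $\mathbf P^{(i)}=(x_{si}x_{ti})_{s,t=1}^n$ and $\mathbf Q^{(j)}=(y_{sj}y_{tj})_{s,t=1}^n$ are taken with zero diagonals.

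Next, using the bilinearity of the inner product $(\widetilde{\mathbf A}\cdot\widetilde{\mathbf B})$ together with $\tau^2=\tau_X^2\tau_Y^2$, I would obtain
\begin{align*}
\frac14\big(\widetilde{\mathbf L}_X\cdot\widetilde{\mathbf L}_Y\big) = \frac{1}{\tau_X^2\tau_Y^2}\sum_{i=1}^p\sum_{j=1}^q\big(\widetilde{\mathbf P}^{(i)}\cdot\widetilde{\mathbf Q}^{(j)}\big) = \frac{1}{\tau^2}\sum_{i=1}^p\sum_{j=1}^q\big(\widetilde{\mathbf P}^{(i)}\cdot\widetilde{\mathbf Q}^{(j)}\big),
\end{align*}
so the whole statement reduces to the single-coordinate identity $\big(\widetilde{\mathbf P}^{(i)}\cdot\widetilde{\mathbf Q}^{(j)}\big)=\cov_n^2(\mathcal X_i,\mathcal Y_j)$ for each fixed pair $(i,j)$.

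Finally I would establish this identity through the U-statistic representation of the $\mathcal{U}$-centered inner product from \cite{szekely2014}: for symmetric zero-diagonal matrices with entries $\kappa(z_s,z_t)$ and $\lambda(w_s,w_t)$, the scalar $\tfrac{1}{n(n-3)}\sum_{s\neq t}\tilde\kappa_{st}\tilde\lambda_{st}$ equals a fourth-order U-statistic whose kernel is the symmetrization over $(s,t,u,v)$ of a product of double differences of $\kappa$ and $\lambda$. Specializing to $\kappa(x,x')=xx'$ and $\lambda(y,y')=yy'$, the double difference of $\kappa$ telescopes to $(x_{si}-x_{ti})(x_{ui}-x_{vi})$ and likewise for $\lambda$, so the symmetrized kernel collapses to exactly the kernel $h$ of Theorem \ref{thm:decomp}, giving $\big(\widetilde{\mathbf P}^{(i)}\cdot\widetilde{\mathbf Q}^{(j)}\big)=\cov_n^2(\mathcal X_i,\mathcal Y_j)$. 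I expect this last step to be the main obstacle: matching the explicit $\mathcal{U}$-centered inner product to $\binom{n}{4}^{-1}\sum_{s<t<u<v}h$ requires careful bookkeeping of the index-coincidence patterns generated by the centering corrections, and one must check that the mean-correction terms produced by $\mathcal{U}$-centering reproduce precisely the four differences $(x_{si}-x_{ti})$, $(x_{ui}-x_{vi})$, $(y_{sj}-y_{tj})$, $(y_{uj}-y_{vj})$ after symmetrization. Equality of the expectations ($\cov^2(x_i,y_j)$ on both sides) is a useful consistency check but not by itself sufficient, since distinct symmetric kernels can share the same mean.
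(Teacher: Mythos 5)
Your argument is correct and follows essentially the same route as the paper's proof: $\mathcal U$-centering annihilates the additive part of the normalized squared-distance matrix, leaving $-2\tau_X^{-2}$ times the $\mathcal U$-centered Gram matrix, and the remaining coordinatewise identity $(\widetilde{\mathbf P}^{(i)}\cdot\widetilde{\mathbf Q}^{(j)})=\cov_n^2(\mathcal X_i,\mathcal Y_j)$ is exactly what the paper establishes through its explicit three-sum expansion \eqref{eq:defDcov} of $(\mathbf A\cdot\widetilde{\mathbf B})$. The only difference is bookkeeping style --- you invoke the symmetrized double-difference kernel of the fourth-order $U$-statistic and let it factor for the product kernel $\kappa(x,x')=xx'$, whereas the paper recombines the order-2, order-3 and order-4 index sums directly --- and the combinatorial identity you defer does check out (the coefficients of $a_{st}b_{st}$, $a_{st}b_{su}$ and $a_{st}b_{uv}$ agree on both sides), so this is not a gap beyond the level of detail the paper itself supplies.
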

\begin{proof}
	By Lemma A.1. of \cite{park2015}, since all diagonal entries of distance matrices $\mathbf A$ and $\mathbf B$ are equal to 0, we have
	$
	(\widetilde{\mathbf{A}} \cdot \widetilde{\mathbf{B}} ) = (\mathbf{A} \cdot \widetilde{ \widetilde{\mathbf{B}} } ).
	$
	Then, it can be directly verified that for any $1\leq s,t \leq n$, $\sum_{u =1 }^{n} \widetilde{b}_{ut} = \sum_{v = 1}^{n} \widetilde{b}_{sv} = 0$ and it further implies that
	\begin{itemize}
		\item[(i)] $ \widetilde{ \widetilde{\mathbf{B}} } = \widetilde{ \mathbf B}$ as long as the diagonal elements of $\mathbf{B}$ are 0;
		\item[(ii)] $\widetilde{ \mathbf B} = 0$ if $\mathbf{B} = \mathbf{a} \mathbf{1}_n^T \text{ or } \mathbf{B} =   \mathbf{1}_n \mathbf{a}^T = 0$ for any vector $\mathbf{a} \in \mathbb{R}^{n}$.
	\end{itemize}
	Direct calculation shows that
	\begin{multline}
	\label{eq:defDcov}
	(\mathbf{A} \cdot  \widetilde{\mathbf{B}}  )
	=   \frac{1}{\binom{n}{2}} \frac{1}{2!} \sum\limits_{(s,t) \in \mathbf i_{2}^{n}} a_{st}b_{st}
	 \\ + \frac{1}{\binom{n}{4}} \frac{1}{4!} \sum\limits_{(s,t,u,v) \in \mathbf i_{4}^{n}} a_{st}b_{uv}   	
	- \frac{2}{\binom{n}{3}} \frac{1}{3!} \sum\limits_{(s,t,u) \in \mathbf i_{3}^{n}}  a_{st}b_{su},
	\end{multline}
	where $\mathbf{i}_{m}^{n}$ denotes the set of all $m$-tuples drawn without replacement from $\{ 1,2, \cdots, n \}$. Equation \eqref{eq:defDcov} can be used as equivalent definition of the sample distance covariance.
	Notice that
	\begin{align*}
	\widetilde{\mathbf{L}}_X = \widetilde{ \mathbf D}_X - \widetilde{ \mathbf 1}_{n \times n} = \widetilde{ \mathbf D}_X,
	\end{align*}
	where $ \mathbf{D}_X = \frac{1}{\tau_{X}^2}(|X_s - X_t|^2)_{s,t=1}^n$. Similarly, $\widetilde{\mathbf{L}}_Y = \widetilde{ \mathbf D}_Y$. Then, we can further decompose $ \widetilde{ \mathbf D}_X $ as follows,
	\begin{align*}
	\widetilde{ \mathbf D}_X = \widetilde{ \mathbf D}_{X, 1} + \widetilde{ \mathbf D}_{X, 2} + \widetilde{ \mathbf D}_{X, 3} = \widetilde{ \mathbf D}_{X, 2},
	\end{align*}
	where $ \mathbf {D}_{X, 1} = \frac{1}{\tau_X^2} (X_{s}^T X_s)_{s,t=1}^n$, $\mathbf {D}_{X, 2} =-2 \frac{1}{\tau_X^2} (X_{s}^T X_t)_{s,t=1}^n$ and $\mathbf {D}_{X, 3} = \frac{1}{\tau_X^2} (X_{t}^T X_t)_{s,t=1}^n$. Similarly, $ \widetilde{ \mathbf D}_Y = \widetilde{ \mathbf D}_{Y, 2}$. Next,
	using Equation \eqref{eq:defDcov}, we have
	\begin{align*}
	 & \tau^2  \times (\widetilde{ \mathbf L}_X \cdot \widetilde{ \mathbf L}_Y  ) \\
	 = & \tau^2  \times (\widetilde{ \mathbf D}_{X, 2} \cdot \widetilde{ \mathbf D}_{Y , 2}  ) \\
	=  & 4 \left\lbrace  \frac{1}{\binom{n}{2}} \frac{1}{2!} \sum\limits_{(s,t) \in i_{2}^{n}}  X_s^T X_t Y_s^T Y_t + \right.  \\
	 & \left.  \frac{1}{\binom{n}{4}} \frac{1}{4!} \sum\limits_{(s,t,u,v) \in \mathbf i_{4}^{n}} X_s^T X_t Y_u^T Y_v
	  -\frac{2}{\binom{n}{3}} \frac{1}{3!} \sum\limits_{(s,t,u) \in \mathbf i_{3}^{n}} X_s^T X_t Y_s^T Y_u \right\rbrace \\
	  =  & 4 \sum\limits_{i=1}^p \sum\limits_{j=1}^q  \left\lbrace  \frac{1}{\binom{n}{2}} \frac{1}{2!} \sum\limits_{(s,t) \in i_{2}^{n}}  x_{si} x_{ti} y_{sj} y_{tj} + \right.  \\
	  & \quad\quad \left.  \frac{1}{\binom{n}{4}} \frac{1}{4!} \sum\limits_{(s,t,u,v) \in \mathbf i_{4}^{n}} x_{si} x_{ti} y_{uj} y_{vj}
	  -\frac{2}{\binom{n}{3}} \frac{1}{3!} \sum\limits_{(s,t,u) \in \mathbf i_{3}^{n}} x_{si} x_{ti} y_{sj} y_{uj} \right\rbrace \\
	  = & 4 \sum_{i=1}^p \sum_{j=1}^q  \left\lbrace   \frac{ 1 }{\binom{n}{4}}    \sum_{k< l< s < t  }  \frac{1}{4!} \sum_{  * }^{(k,  l , s, t )}  \frac{(x_{ki} - x_{li}) (y_{kj} - y_{lj})(x_{si} - x_{ti}) (y_{sj} - x_{tj})}{4}    \right\rbrace  \\
	  = & 4 \sum_{i=1}^p \sum_{j=1}^q cov_n^2(\mathcal X_i, \mathcal Y_j).
	\end{align*}
\end{proof}

Therefore, by Lemma \ref{PropEquality}, we have the following decomposition,
\begin{align*}
dCov^2_n(\mathbf X, \mathbf Y)  =  \frac{1}{\tau} \sum_{i=1}^p \sum_{j=1}^q cov_n^2(\mathcal X_i, \mathcal Y_j)  + \mathcal{R}_n,
\end{align*}
where ${\cal R}_n = \tau R_n$.

	(ii) Note $L_X(X_s, X_t) = O_p (a_p)=o_p(1)$ and $L_Y(Y_s, Y_t)=O_p(b_q)=o_p(1)$ for $s \ne t \in \{1,\dots,n\}$. We can then apply Proposition \ref{prop:taylor}, obtain that $R_X(X_s, X_t) = O_p (L_X(X_s, X_t)^2)$  and $R_Y(Y_s, Y_t) = O_p (L_Y(Y_s, Y_t)^2)$. For the leading term $\tau (\widetilde{ \mathbf L}_X \cdot \widetilde{ \mathbf L}_Y  )$, it can be easily seen from Equation \eqref{eq:defDcov} that $(\widetilde{ \mathbf L}_X \cdot \widetilde{ \mathbf L}_Y  ) = O_p (a_p b_q) $. Similarly, for the remainder terms,
	$(\widetilde{ \mathbf L}_X \cdot \widetilde{ \mathbf R}_Y  ) = O_p ( a_p b_q^2 )$, $(\widetilde{ \mathbf R}_X \cdot \widetilde{ \mathbf L}_Y  ) = O_p(a_p^2b_q) $ and $(\widetilde{ \mathbf R}_X \cdot \widetilde{ \mathbf R}_Y  ) =O_p ( a_p^2b_q^2) $. Thus, we have $R_n = O_p (a_p^2b_q + a_p b_q^2 )$ and ${\cal R}_n = \tau  R_n  = O_p (\tau a_p^2b_q + \tau a_p b_q^2 ) = o_p(1) $. Therefore the remainder terms are negligible comparing to the leading term.
\end{proof}

\subsection{Proof of Theorem \ref{thm:decompHsic}}
\begin{proof}
	(i) We first show that $\gamma_{\mathbf X}$ is asymptotically equal to $\tau_{X}$ (similar result applies to $\gamma_{\mathbf Y}$ and $\tau_{Y}$). Recall that for all $s \neq t$,
	\begin{align*}
	L_{X}(X_{s}, X_{t}) = \frac{|X_{s} - X_{t}|^{2} - \tau_{X}^{2}}{ \tau_{X}^{2}}.
	\end{align*}
	Since $ L_{X}(X_{s}, X_{t}) = O_{p} (a_{p}) =o_p(1)$, we have
	$\frac{|X_{s} - X_{t}|^{2} }{ \tau_{X}^{2} } \overset{p}{\rightarrow} 1$. Then
	$$
	\frac{\text{median}\{|X_{s} - X_{t}|^{2}\} }{ \tau_{X}^{2} } \overset{p}{\rightarrow} 1
	$$
	and thus
	\begin{align*}
	\frac{\tau_{X}}{\gamma_{\mathbf X}} = \sqrt{ \frac{ \tau_{X}^{2} }{\text{median}\{|X_{i} - X_{j}|^{2}\} } } \overset{p}{\rightarrow} 1.
	\end{align*}
	Similar arguments can also be used to show that $ \frac{\tau_{Y}}{\gamma_{\mathbf Y}} \overset{p}{\rightarrow} 1$. Next, under Proposition \ref{prop:taylor}, we can deduce that
	\begin{align*}
	 & f \left( \frac{ |X_{s} - X_{t}|}{ \gamma_{\mathbf X}} \right) \\
	= &  f \left( \frac{|X_{s} - X_{t}|}{ \tau_{X} }  \frac{\tau_{X}}{\gamma_{\mathbf X}} \right) \\
	= &  f\left( \left\lbrace  1+ \frac{ L_{X}(X_{s}, X_{t})}{2} + R_{X}(X_{s}, X_{t}) \right\rbrace  \frac{\tau_{X}}{\gamma_{\mathbf X}} \right) \\
	= &  f\left(\frac{\tau_{X}}{\gamma_{\mathbf X}}\right) + f^{(1)} \left( \frac{\tau_{X}}{ \gamma_{\mathbf X}} \right) \left\lbrace  \frac{ L_{X}(X_{s}, X_{t})}{2} + R_{X}(X_{s}, X_{t}) \right\rbrace  \frac{\tau_{X}}{\gamma_{\mathbf X}} + R_{f}(X_{s},X_{t}),
	\end{align*}
	where $ R_{f}(X_{s},X_{t}) $ is the remainder term. Similarly,
	\begin{multline*}
	g \left( \frac{|Y_{s} - Y_{t}|}{\gamma_{\mathbf Y}} \right) = g\left(\frac{\tau_{Y}}{\gamma_{\mathbf Y}}\right)  + \\ g^{(1)} \left( \frac{\tau_{Y}}{\gamma_{\mathbf Y}} \right) \left\lbrace  \frac{L_{Y}(Y_{s}, Y_{t})}{2} +  R_{Y}(Y_{s}, Y_{t}) \right\rbrace  \frac{\tau_{Y}}{\gamma_{\mathbf Y}} + R_{g}(Y_{s},Y_{t}).
	\end{multline*}	
	Similar to the proof of Theorem \ref{thm:decomp},
	\begin{multline*}
	hCov_n^2(\mathbf X, \mathbf Y) = ( \widetilde{\mathbf R} \cdot \widetilde{\mathbf H}) \\= \frac{1}{4} f^{(1)} \left( \frac{\tau_{X}}{ \gamma_{\mathbf X}} \right) g^{(1)} \left( \frac{\tau_{Y}}{\gamma_{\mathbf Y}} \right)\frac{\tau_{X}}{ \gamma_{\mathbf X}} \frac{\tau_{Y}}{\gamma_{\mathbf Y}}  (\widetilde{ \mathbf L}_X \cdot \widetilde{ \mathbf L}_Y  )  + \frac{1}{2}f^{(1)} \left( \frac{\tau_{X}}{ \gamma_{\mathbf X}} \right)\frac{\tau_{X}}{ \gamma_{\mathbf X}} (\widetilde{ \mathbf L}_X \cdot \widetilde{ \mathbf R}_Y  )\\ + \frac{1}{2}g^{(1)} \left( \frac{\tau_{Y}}{\gamma_{\mathbf Y}} \right)\frac{\tau_{Y}}{\gamma_{\mathbf Y}} (\widetilde{ \mathbf R}_X \cdot \widetilde{ \mathbf L}_Y  ) + (\widetilde{ \mathbf R}_X \cdot \widetilde{ \mathbf R}_Y  ),
	\end{multline*}
	where $\mathbf{L}_X= (L_{X}(X_{s}, X_{t} ))_{s,t=1}^n$, $\mathbf{L}_Y= (L_{Y}(Y_{s}, Y_{t} ))_{s,t=1}^n$ and
	\begin{align*}
	& \mathbf{R}_X= \left(f^{(1)} \left( \frac{\tau_{X}}{ \gamma_{\mathbf X}} \right) \frac{\tau_{X}}{\gamma_{\mathbf X}} R_{X}(X_{s}, X_{t}) + R_{f}(X_{s}, X_{t})\right)_{s,t=1}^n, \\
	& \mathbf{R}_Y= \left(g ^{(1)} \left( \frac{\tau_{Y}}{ \gamma_{\mathbf Y}} \right) \frac{\tau_{Y}}{\gamma_{\mathbf Y}} R_{Y}(Y_{s}, Y_{t}) + R_{g}(Y_{s}, Y_{t})\right)_{s,t=1}^n.
	\end{align*}
	Denote $R_n = \frac{1}{2}f^{(1)} \left( \frac{\tau_{X}}{ \gamma_{\mathbf X}} \right)\frac{\tau_{X}}{ \gamma_{\mathbf X}} (\widetilde{ \mathbf L}_X \cdot \widetilde{ \mathbf R}_Y  )+ \frac{1}{2}g^{(1)} \left( \frac{\tau_{Y}}{\gamma_{\mathbf Y}} \right)\frac{\tau_{Y}}{\gamma_{\mathbf Y}} (\widetilde{ \mathbf R}_X \cdot \widetilde{ \mathbf L}_Y  ) + (\widetilde{ \mathbf R}_X \cdot \widetilde{ \mathbf R}_Y  )$ and $\mathcal R_{n} = \tau R_n$. By Lemma \ref{PropEquality}, we have
	\begin{multline*}
	\tau \times hCov^2_{n}(\mathbf X, \mathbf Y) = \\ f^{(1)} \left( \frac{\tau_{X}}{\gamma_{\mathbf X}} \right)  g^{(1)} \left( \frac{\tau_{Y}}{ \gamma_{\mathbf Y}} \right) \frac{\tau_{X}}{\gamma_{\mathbf X}}  \frac{\tau_{Y}}{\gamma_{ \mathbf Y}}  \frac{1}{\tau} \sum\limits_{i=1}^{p} \sum\limits_{j=1}^{q} cov_{n}^2 (\mathcal X_{i}, \mathcal Y_{j}) + \mathcal{R}_{n}.
	\end{multline*}
	
	(ii) We present the following lemma which would be useful in subsequent arguments.
	\begin{lemma}
		\label{TailOder}
		Suppose $f^{(2)}$ and $g^{(2)}$ are continuous on some open interval containing 1. Then under the assumptions of Theorem \ref{thm:decompHsic},
		$$
		R_{f}(X_{s},X_{t}) = O_{p}( L_{X}(X_{s}, X_{t})^2 ), \quad R_{g}(Y_{s},Y_{t}) = O_{p}( L_{Y}(Y_{s}, Y_{t})^2 ).
		$$
	\end{lemma}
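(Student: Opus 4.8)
The plan is to recognize $R_f(X_s,X_t)$ as the exact second-order Taylor remainder that arises when $f$ is expanded about the point $\tau_X/\gamma_{\mathbf X}$, and then to control it through the integral form of the remainder. Writing $a := \tau_X/\gamma_{\mathbf X}$ and $h := a\{\tfrac{1}{2}L_X(X_s,X_t) + R_X(X_s,X_t)\}$, Proposition \ref{prop:taylor} gives $|X_s-X_t|/\gamma_{\mathbf X} = a + h$, so the expansion displayed in the proof of Theorem \ref{thm:decompHsic} reads precisely $f(a+h) = f(a) + f^{(1)}(a)h + R_f(X_s,X_t)$. Taylor's theorem with the integral remainder then yields
\begin{align*}
R_f(X_s,X_t) = h^2 \int_0^1 (1-u)\, f^{(2)}(a + uh)\, du.
\end{align*}
First I would pin down the order of the prefactor $h^2$, and then argue separately that the integral factor is $O_p(1)$.

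For the prefactor, recall from the proof of Theorem \ref{thm:decompHsic} that $a = \tau_X/\gamma_{\mathbf X} \overset{p}{\rightarrow} 1$, so $a = O_p(1)$. Since $L_X(X_s,X_t) = o_p(1)$ under the hypotheses, Proposition \ref{prop:taylor} gives $R_X(X_s,X_t) = O_p(L_X(X_s,X_t)^2) = o_p(L_X(X_s,X_t))$, whence $\tfrac{1}{2}L_X(X_s,X_t) + R_X(X_s,X_t) = O_p(L_X(X_s,X_t))$. Multiplying by $a = O_p(1)$ gives $h = O_p(L_X(X_s,X_t))$ and therefore $h^2 = O_p(L_X(X_s,X_t)^2)$.

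The main step, and the only delicate point, is to show that the integral factor is bounded in probability, which amounts to keeping the argument $a+uh$ inside the interval where $f^{(2)}$ is continuous. By hypothesis $f^{(2)}$ is continuous on some open interval $(1-\epsilon, 1+\epsilon)$; since $a \overset{p}{\rightarrow} 1$ and $h \overset{p}{\rightarrow} 0$ (the latter because $L_X(X_s,X_t) = o_p(1)$ forces both $L_X$ and $R_X$ to be $o_p(1)$), we have $\sup_{u \in [0,1]} |a + uh - 1| \overset{p}{\rightarrow} 0$. Hence with probability tending to one every point $a+uh$, $u \in [0,1]$, lies in the compact subinterval $[1-\epsilon/2,\, 1+\epsilon/2]$, on which the continuous function $f^{(2)}$ attains a finite bound $M$; on this event the integral is at most $M \int_0^1 (1-u)\,du = M/2$, so the integral factor is $O_p(1)$. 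This is the same continuous-mapping argument already used in the proof of Proposition \ref{prop:taylor}, now applied to $f^{(2)}$ evaluated along $a+uh$. Combining the two orders gives $R_f(X_s,X_t) = O_p(1)\cdot O_p(L_X(X_s,X_t)^2) = O_p(L_X(X_s,X_t)^2)$. The identical argument, with $g$, $|Y_s-Y_t|$, $\gamma_{\mathbf Y}$, $\tau_Y$ and $L_Y$ in place of their $X$-counterparts, delivers $R_g(Y_s,Y_t) = O_p(L_Y(Y_s,Y_t)^2)$.
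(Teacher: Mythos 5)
Your proof is correct and follows essentially the same route as the paper: both identify $R_f$ as the exact second-order Taylor remainder in integral form (your single integral $\int_0^1(1-u)f^{(2)}(a+uh)\,du$ equals the paper's double integral $\int_0^1\int_0^1 v f^{(2)}(a+uvh)\,du\,dv$ by Fubini), extract the prefactor $O_p(L_X(X_s,X_t)^2)$, and use continuity of $f^{(2)}$ near $1$ together with $a\overset{p}{\to}1$, $h\overset{p}{\to}0$ to make the integral factor $O_p(1)$. The only cosmetic difference is that the paper invokes the continuous mapping theorem for the map $(x,y)\mapsto\int\int vf^{(2)}(x+uvy)\,du\,dv$ at $(1,0)$, whereas you bound $f^{(2)}$ on a compact subinterval on an event of probability tending to one; both are valid.
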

	\begin{proof}
		The remainder term can be written as
		\begin{multline}\label{eq:rf}
		R_{f}(X_{s},X_{t}) =\\  \int_{0}^1 \int_{0}^{1} v f^{(2)} \left( \frac{\tau_{X}}{\gamma_{\mathbf X}} + uv \left\lbrace \frac{L_{X}(X_s, X_t) }{2} + R_X (X_s, X_t) \right\rbrace \frac{\tau_{X}}{\gamma_{\mathbf X}} \right) dudv
        \\  \times \left(\frac{\tau_{X}}{\gamma_{\mathbf X}}\right)^2\left( \frac{L_{X}(X_s, X_t) }{2} + R_X (X_s, X_t) \right)^2.
        \end{multline}
		Set $ \varphi (x,y) = \int_{0}^1 \int_{0}^{1} v f^{(2)} \left( x + uv y \right) dudv $. Then $\varphi (x,y)$ is continuous at $(1,0)$. By the continuous mapping theorem, we have
		\begin{multline*}
		\int_{0}^1 \int_{0}^{1} v f^{(2)} \left( \frac{\tau_{X}}{\gamma_{\mathbf X}} + uv \left\lbrace \frac{L_{X}(X_s, X_t) }{2} + R_X (X_s, X_t) \right\rbrace \frac{\tau_{X}}{\gamma_{\mathbf X}} \right) dudv \\ \overset{p}{\rightarrow}
		\int_{0}^1 \int_{0}^{1} v f^{(2)} \left( 1 \right) dudv.
		\end{multline*}
		So $ R_{f}(X_{s},X_{t})  = O_p (1) \left( \frac{L_{X}(X_s, X_t) }{2} + R_X (X_s, X_t) \right)^2 = O_p ( L_{X}(X_{s}, X_{t})^2 ) $. Similar argument holds for $R_{g}(Y_{s},Y_{t})$.
	\end{proof}
	Both the Gaussian and Laplacian kernel have continuous second order derivatives. From Lemma \ref{TailOder}, we know
	\begin{align*}
	& f^{(1)} \left( \frac{\tau_{X}}{ \gamma_{\mathbf X}} \right) \frac{\tau_{X}}{\gamma_{\mathbf X}} R_{X}(X_{s}, X_{t}) + R_{f}(X_{s}, X_{t}) =O_p ( L_{X}(X_{s}, X_{t})^2 ), \\
	& g^{(1)} \left( \frac{\tau_{Y}}{ \gamma_{\mathbf Y}} \right) \frac{\tau_{Y}}{\gamma_{\mathbf Y}} R_{Y}(Y_{s}, Y_{t}) + R_{g}(Y_{s}, Y_{t}) = O_p( L_{Y}(Y_{s}, Y_{t})^2 ).
	\end{align*}
	Thus, similar arguments in Theorem \ref{thm:decomp} can be used to show that $ {\cal R}_n = O_p( \tau a_p^2b_q + \tau a_pb_q^2 ) = o_p(1)$.
\end{proof}

\subsection{Proof of Proposition \ref{prop:ind}}
\begin{proof} Clearly, $E\left[ k_{st}(i) l_{uv}(j) \right]=0$ when $\{s,t\} \cap \{ u,v \} = \emptyset$. For any $1 \leq i, i'\leq p, 1 \leq j, j' \leq q,$
	\begin{align*}
	& E\left[ k_{st}(i) l_{su}(j) \right] \\
	= & E \left[ E[ k_{st}(i) l_{su}(j)  | x_{si}, y_{sj} ] \right] \\
	= & E \left[ E[ k_{st}(i)  | x_{si}, y_{sj} ] E[ l_{su}(j)  | x_{si}, y_{sj} ] \right].
	\end{align*}
	Notice that
	\begin{align*}
	& E[ k_{st}(i)  | x_{si}, y_{sj} ] \\
	= & E \big\{   k(x_{si},x_{ti})-E[k(x_{si},x_{ti})|x_{si}]  -E[k(x_{si},x_{ti})|x_{ti}]+E[k(x_{si},x_{ti})] |  x_{si}, y_{sj} \big\}  \\
	= &  E[k(x_{si},x_{ti})|x_{si},y_{sj}] - E[k(x_{si},x_{ti})|x_{si}] - E[k(x_{si},x_{ti})] + E[k(x_{si},x_{ti})]  \\
	= & 0.
	\end{align*}
	Thus $ E\left[ k_{st}(i) l_{su}(j) \right]=0 $. Similarly, $E\left[ k_{st}(i) k_{su}(i') \right] = E\left[ l_{st}(j) l_{su}(j') \right] =0.$
\end{proof}

\subsection{Proof of Theorem \ref{thm:key} }
\begin{proof}
Let $\widetilde{ \mathbf K} = (\tilde{k}_{st})_{s,t=1}^n$ and $\widetilde{ \mathbf L} = (\tilde{l}_{st})_{s,t=1}^n$. Notice that
\begin{align*}
uCov^2_{n}(\mathbf X,\mathbf Y) & = (pq)^{-1/2}\sum^{p}_{i=1}\sum^{q}_{j=1} \frac{1}{n(n-3)} \sum_{s\neq
	t}\tilde{k}_{st}(i)\tilde{l}_{st}(j) \\
& = \frac{1}{n(n-3)} \sum_{s\neq t} \left(
	 p^{-1/2} \sum^{p}_{i=1} \tilde{k}_{st}(i)\right) \left( q^{-1/2} \sum^{q}_{j=1} \tilde{l}_{st}(j) \right).
\end{align*}	
Under Assumption \ref{D3}, we have
\begin{align*}
&p^{-1/2}\sum^{p}_{i=1}\tilde{k}_{st}(i)
\\=&p^{-1/2}\sum^{p}_{i=1}k_{st}(i)-\frac{1}{n-2}\sum_{u\neq
	t}p^{-1/2}\sum^{p}_{i=1}k_{ut}(i)
\\& -\frac{1}{n-2}\sum_{ v\neq
	s}p^{-1/2}\sum^{p}_{i=1}k_{sv}(i)
+\frac{1}{(n-1)(n-2)}\sum_{u\neq
	v}p^{-1/2}\sum^{p}_{i=1}k_{uv}(i)
\\ \overset{d }{ \rightarrow } & c_{st}-\frac{1}{n-2}\sum_{u\neq t}c_{ut}-\frac{1}{n-2}\sum_{v\neq s}c_{vs}+\frac{1}{(n-1)(n-2)}\sum_{u\neq
	v}c_{uv}.
\end{align*}
Then we get
\begin{multline*}
n(n-3) \times uCov^2_{n}(\mathbf X, \mathbf Y)  \overset{d}{\rightarrow} \\ \sum_{s\neq t}\left(c_{st}-\frac{1}{n-2}\sum_{u\neq
	t}c_{ut}-\frac{1}{n-2}\sum_{v\neq
	s}c_{sv}+\frac{1}{(n-1)(n-2)}\sum_{u\neq v}c_{uv}\right)
\\ \times \left(d_{st}-\frac{1}{n-2}\sum_{u\neq
	t}d_{ut}-\frac{1}{n-2}\sum_{v\neq
	s}d_{sv}+\frac{1}{(n-1)(n-2)}\sum_{u\neq v}d_{uv}\right).
\end{multline*}
Set
\begin{align*}
& \mathbf{c} = \left( c_{12}, c_{13}, \cdots, c_{1n}, c_{23}, \cdots,c_{2n}, c_{34}, \cdots, c_{n(n-1) } \right)^T, \\
& \mathbf{d} = \left( d_{12}, d_{13}, \cdots, d_{1n}, d_{23}, \cdots,d_{2n}, d_{34}, \cdots, d_{n(n-1) } \right)^T.
\end{align*}
Under Assumption \ref{D3} and by Proposition \ref{prop:ind}, we know that
\begin{align*} \left(
\begin{array}{c}
\mathbf{c} \\
\mathbf{d}
\end{array} \right)  \sim N \left(\mathbf 0, \left(
\begin{array}{cc}
\sigma_{x}^2 \mathbf{I}_{n(n-1)/2} & \sigma_{xy}^2 \mathbf{I}_{n(n-1)/2}  \\
\sigma_{xy}^2 \mathbf{I}_{n(n-1)/2} & \sigma_{y}^2 \mathbf{I}_{n(n-1)/2}
\end{array} \right)
\right).
\end{align*}
Define $\mathbf C=(c_{st})_{s,t=1}^n$ such that $c_{st}=c_{ts}$ and $\widetilde{\mathbf  C}=(\widetilde{c}_{st})_{s,t=1}^n$. Here we assume that $c_{ss}=0$. From the proof of Lemma A.1 of Park et al. (2015), we have
\begin{align*}
\text{vec}( \widetilde{\mathbf  C} )= \mathbf F \mathbf S \text{vec}(\mathbf C)  = \mathbf F \mathbf S \mathbf F \text{vec}(\mathbf C),
\end{align*}
where $\text{vec}(\mathbf C)$ is the usual vectorization of matrix $\mathbf C$; $\mathbf F$ is the matrix of the linear operator that sets the diagonal of a matrix to be 0, i.e., $\text{vec}(\mathbf B_{-D}) =\mathbf  F \text{vec}(\mathbf B)$, $\mathbf B_{-D}$ is $\mathbf B$ with its diagonal set to be 0; Letting $\mathbf J = \mathbf{1}_n \mathbf{1}_n^T$, we define $\mathbf S$ as
\begin{align*}
\mathbf S = \mathbf I_n \otimes \mathbf I_n - \frac{1}{n-2} \mathbf{J} \otimes \mathbf{I}_n - \frac{1}{n-2} \mathbf{I}_n \otimes \mathbf{J} + \frac{1}{(n-1)(n-2)} \mathbf{J} \otimes \mathbf{J}.
\end{align*}
Next, to simplify the following proof, we will use a different vectorization operator, which will align the upper triangular elements frist, then the lower triangular elements and lastly the diagonal elements, i.e., define
\begin{align*}
\widetilde{\text{vec}}(\mathbf C) & = \left( \mathbf{c}_{u}^{T}, \mathbf{c}_{l}^{T}, \mathbf{c}_{d}^{T}  \right)^T, \\
\mathbf{c}_{u}^{T} &  = \left(  c_{12}, c_{13}, \cdots, c_{1n}, c_{23}, \cdots,c_{2n}, c_{34}, \cdots, c_{(n-1)n} \right)^T, \\
\mathbf{c}_{l}^{T} &  =  \left(  c_{21}, c_{31}, \cdots, c_{n1}, c_{32}, \cdots,c_{n2}, c_{43}, \cdots, c_{n(n-1)} \right)^T, \\
\mathbf{c}_{d}^{T} & = \left( c_{11}, c_{22}, \cdots , c_{nn}  \right)^T.
\end{align*}
Notice that there is a permutation matrix $\mathbf{P}_1$ such that $\widetilde{\text{vec}}(\mathbf C) = \mathbf{P}_1 \text{vec}(\mathbf C)$. Then
\begin{align*}
\widetilde{\text{vec}}( \widetilde{ \mathbf C} )= \mathbf{P}_1 \mathbf F \mathbf S \mathbf{F} \mathbf{P}_1^{T} \widetilde{\text{vec}}(\mathbf C).
\end{align*}
Observe that for any matrix $\mathbf C$, both the column sum and row sum of $\widetilde{ \mathbf C}$ are 0. We can verify that $\widetilde{\widetilde{\mathbf C}}=\widetilde{\mathbf C}$.
Set $\mathbf U = \mathbf{P}_1 \mathbf F \mathbf S \mathbf{F} \mathbf{P}_1^T$. It follows that $ \mathbf U^2 \widetilde{\text{vec}}(\mathbf C)=\mathbf U \widetilde{ \text{vec}}(\mathbf C)$ and thus
\begin{align}\label{eq12}
(\mathbf U^2- \mathbf U) \widetilde{\text{vec}}( \mathbf C)=0.
\end{align}
Equation (\ref{eq12}) still holds if we replace $c_{ss}$ by some nonzero elements. Since Equation (\ref{eq12}) holds for any $\widetilde{ \text{vec}}( \mathbf C)$, we must have
$\mathbf U^2= \mathbf U$ which implies that $\mathbf U$ is an idempotent matrix. Next, let $\mathbf C^u$ ($\mathbf C^l$) be the matrix by setting the lower (upper) triangular and diagonal elements in $\mathbf C$ to be zero. Denote
\begin{align*}
\mathbf P_2=\begin{pmatrix}
\mathbf 0 & \mathbf I & \mathbf 0 \\
\mathbf I & \mathbf 0 & \mathbf 0 \\
\mathbf 0 & \mathbf 0 & \mathbf I
\end{pmatrix}, \quad
\mathbf D=\begin{pmatrix}
\mathbf I \\
\mathbf 0 \\
\mathbf 0
\end{pmatrix}.
\end{align*}
Then, we see that $ \widetilde{\text{vec}}(\mathbf C^l) = \mathbf{P}_2 \widetilde{\text{vec}}(\mathbf C^u)$ and
\begin{align*}
\mathbf U\widetilde{\text{vec}}(\mathbf C)= & \mathbf U \widetilde{\text{vec}}(\mathbf C^u) + \mathbf U \mathbf P_2 \widetilde{\text{vec}}(\mathbf C^u)= \mathbf U(\mathbf I+ \mathbf P_2) \widetilde{\text{vec}}(\mathbf C^u)=\mathbf  U (\mathbf I+ \mathbf P_2)\mathbf D \mathbf c.
\end{align*}
We note that
\begin{align*}
\mathbf W:=&\mathbf D^T (\mathbf I+ \mathbf P_2)\mathbf U \mathbf U( \mathbf I+ \mathbf P_2) \mathbf D
=\mathbf D^T (\mathbf U+ \mathbf U \mathbf P_2+ \mathbf P_2 \mathbf U+\mathbf P_2 \mathbf U \mathbf P_2) \mathbf D.
\end{align*}
We partition $\mathbf U$ into three blocks corresponding to the upper triangular, lower triangular and diagonal elements respective, i.e., we write
\begin{align*}
\mathbf U=\begin{pmatrix}
\mathbf U_{1} & \mathbf U_2 & \mathbf 0 \\
\mathbf U_2 & \mathbf U_{1} & \mathbf  0 \\
\mathbf 0 & \mathbf 0 & \mathbf 0
\end{pmatrix},
\end{align*}
where we have used the symmetry for $\mathbf U$. Then we have
$$\mathbf W=2(\mathbf U_1+ \mathbf U_2).$$
Now we argue that $\mathbf W^2= 2 \mathbf W$.
Recall that $\mathbf U$ is an idempotent matrix. Thus
\begin{align*}
& \mathbf U_1^2+ \mathbf U_2^2=\mathbf U_1, \quad \mathbf U_1 \mathbf U_2+ \mathbf U_2 \mathbf U_1= \mathbf U_2.
\end{align*}
Therefore, we get
$$\mathbf W^2=4( \mathbf U_1+ \mathbf U_2)^2=4( \mathbf U_1^2+ \mathbf U_2^2+\mathbf U_1 \mathbf U_2+ \mathbf U_2 \mathbf U_1)=4(\mathbf U_1+\mathbf U_2)=2 \mathbf W,$$ which indicates that $\mathbf W$ has eigenvalues which are either equal to two or zero.
It remains to show that the rank of $\mathbf W$ is $n(n-3)/2$ or equivalently, the trace of $\mathbf W /2=\mathbf U_1+\mathbf U_2$ is $n(n-3)/2.$
Note that
\begin{align*}
\text{Tr}(\mathbf W/2)=\text{Tr}(\mathbf U_1+\mathbf U_2)=\sum^{n(n-1)/2}_{i=1} \frac{\mathbf r^T_i \mathbf U \mathbf r_i }{ 2} =\frac{ n(n-1)}{4} \widetilde{\text{vec}}(\widetilde{\mathbf E}_1)^T\widetilde{\text{vec}}(\widetilde{\mathbf E}_1),
\end{align*}
where $\mathbf r_i=(\mathbf e_{i}^T,\mathbf e_{i}^T, \mathbf 0 ^T)^T$ and $\mathbf e_i$ is a $n(n-1)/2$-dimensional vector with $1$ on the $i$th position and zero otherwise; $\widetilde{\mathbf E}_i$ denotes the $\mathcal U$-centering version of the matrix $\mathbf E_i$ such that $\widetilde{\text{vec}}(\mathbf E_i) = \mathbf r_{i}$. Direct calculation shows that
\begin{align*}
\text{vec}(\widetilde{\mathbf E}_1)^T \text{vec}(\widetilde{\mathbf E}_1)=&\frac{2(n-3)^2}{(n-1)^2}+4(n-2)\frac{(n-3)^2}{(n-1)^2(n-2)^2}
\\&+(n-2)(n-3)\frac{4}{(n-1)^2(n-2)^2}=\frac{2(n-3)}{n-1},
\end{align*}
which implies that
$4^{-1}n(n-1)\widetilde{\text{vec}}(\widetilde{\mathbf E}_1)^T \widetilde{\text{vec}}(\widetilde{\mathbf E}_1)=n(n-3)/2.$ Using the above results and setting $\mathbf M = \mathbf W/2$, we have
\begin{multline*}
\text{vec}(\widetilde{\mathbf C})^T\text{vec}(\widetilde{\mathbf C})= \widetilde{\text{vec}}(\widetilde{\mathbf C})^T \widetilde{\text{vec}}(\widetilde{ \mathbf C}) = \widetilde{\text{vec}}(\mathbf C)^T \mathbf U\widetilde{\text{vec}}(\mathbf C) \\ =2 \mathbf c^T \mathbf M \mathbf c \sim 2\sigma^2_x \chi^2_{n(n-3)/2}.
\end{multline*}
Thus,
$$
 Cov^2_{n}(\mathbf X, \mathbf X)   \overset{d}{\rightarrow} \frac{2}{n(n-3)} \mathbf{c}^T \mathbf M \mathbf c \overset{d}{=} \frac{2}{n(n-3)}\sigma^{2}_{x}\chi^2_{n(n-3)/2}.
$$
Similarly,
\begin{align*}
& uCov^2_{n}(\mathbf X,\mathbf Y) \overset{d}{\rightarrow}  \frac{2}{n(n-3)} \mathbf{c}^T \mathbf{M} \mathbf{d},\\
& uCov^2_{n}(\mathbf Y, \mathbf Y) \overset{d}{\rightarrow}  \frac{2}{n(n-3)} \mathbf{d}^T \mathbf M \mathbf{d} \overset{d}{=} \frac{2}{n(n-3)} \sigma^2_y \chi^2_{n(n-3)/2}.
\end{align*}
\end{proof}

\subsection{Proof of Proposition \ref{prop:exactT}}
\begin{proof}
	Since
	\begin{align*} \left(
	\begin{array}{c}
	\mathbf{c} \\
	\mathbf{d}
	\end{array} \right)  \overset{d}{=} N \left(\mathbf 0, \left(
	\begin{array}{cc}
	\sigma_{x}^2 \mathbf{I}_{n(n-1)/2} & \sigma_{xy}^2 \mathbf{I}_{n(n-1)/2}  \\
	\sigma_{xy}^2 \mathbf{I}_{n(n-1)/2} & \sigma_{y}^2 \mathbf{I}_{n(n-1)/2}
	\end{array} \right)
	\right),
	\end{align*}
	we have
	\begin{align*}
	\mathbf{c} | \mathbf{d} \overset{d}{=} N\left(\mu \mathbf{d}, \sigma^2 \mathbf{I}_{n(n-1)/2} \right),
	\end{align*}
	where $\mu = \sigma_{xy}^2/\sigma_{y}^2, \sigma^2 = (\sigma_{x}^2 \sigma_{y}^2 - \sigma_{xy}^4)/ \sigma_{y}^2$. Set
	\begin{align*}
	\mathbf{z} = \frac{\mathbf{M} \mathbf{d}}{\sqrt{ \left( \mathbf{d}^T \mathbf{M} \mathbf{d} \right) }}.
	\end{align*}
	It can be easily seen that conditional on $\mathbf d$,
	\begin{align*}
	 \mathbf{c}^T \mathbf{z} / \sigma     \sim  N(\mu \mathbf{z}^T \mathbf d/ \sigma, 1) ,
	\end{align*}
	which implies that $\left. (\mathbf{c}^T \mathbf{z})^2 / \sigma^2  \right| \mathbf{d}  \sim  \chi^{2}_{1} (W^2)$ , where $\chi_{1}^{2}(W^2)$ is the non-central chi-squared distribution and $W^2=  \frac{\mu^2}{\sigma^2} \mathbf{d}^T \mathbf{M} \mathbf{d}$ is the non-centrality parameter. Note that conditioned on $\mathbf d$,
	\begin{align*}
	\mathbf M (\mathbf I - \mathbf{z} \mathbf{z}^T) \mathbf c /\sigma \sim N(\mathbf 0, \mathbf M (\mathbf I - \mathbf{z} \mathbf{z}^T) \mathbf M ),
	\end{align*}
	where we have used the fact that $\mathbf M (\mathbf I - \mathbf{z} \mathbf{z}^T) \mathbf d =0$. As $\mathbf M (\mathbf I - \mathbf{z} \mathbf{z}^T) \mathbf M = \mathbf M - \frac{\mathbf M \mathbf d \mathbf{d}^T \mathbf M}{\mathbf{d}^T \mathbf M \mathbf{d}}$ is a projection matrix with rank $v-1$, it is easy to see that conditioned on $\mathbf d$,
	 $$
	 \mathbf c^T (\mathbf I - \mathbf{z}\mathbf{z}^T) \mathbf M (\mathbf I - \mathbf{z}\mathbf{z}^T) \mathbf c / \sigma^{2} \sim \chi_{v -1}^2.
	 $$
	 Next, conditioned on $\mathbf d$, as $ \mathbf{z}^T \mathbf{c} $ and $ (\mathbf I - \mathbf{z}\mathbf{z}^T) \mathbf c $ are independent, we have  $(\mathbf{c}^T \mathbf{z})^2 / \sigma^2$ and $ \mathbf c^T (\mathbf I - \mathbf{z}\mathbf{z}^T) \mathbf M (\mathbf I - \mathbf{z}\mathbf{z}^T) \mathbf c $ are independent.
	Then,
	\begin{align*}
	P_{H_A}(T_u < t) & \rightarrow P \left(  \sqrt{v-1} \frac{ \frac{ \mathbf{c}^T \mathbf{z} }{ \sqrt{ \left( \mathbf{c}^T \mathbf{M} \mathbf{c} \right)  }} }{ \sqrt{1 - \left( \frac{ \mathbf{c}^T  \mathbf{z} }{ \sqrt{ \left( \mathbf{c}^T \mathbf{M} \mathbf{c} \right)  }} \right)^{2} } } <t  \right) \\
	&= E \left[ P \left( \left. \sqrt{v-1} \frac{ \frac{ \mathbf{c}^T \mathbf{z} }{ \sqrt{ \left( \mathbf{c}^T \mathbf{M} \mathbf{c} \right)  }} }{ \sqrt{1 - \left( \frac{ \mathbf{c}^T  \mathbf{z} }{ \sqrt{ \left( \mathbf{c}^T \mathbf{M} \mathbf{c} \right)  }} \right)^{2} } } <t \right| \mathbf{d} \right)\right] \\
	& = E \left[ P \left( \left. \sqrt{v-1} \frac{  \mathbf{c}^T \mathbf{z}  }{ \sqrt{\mathbf{c}^T \mathbf{M} \mathbf{c} - \left(  \mathbf{c}^T  \mathbf{z}  \right)^{2} } } <t \right| \mathbf{d} \right)\right] \\
	& = E \left[ P \left( \left. \frac{ \mathbf{c}^T \mathbf{z}  }{ \sqrt{ \frac{1}{v-1} \mathbf c^T (\mathbf I - \mathbf{z}\mathbf{z}^T) \mathbf M (\mathbf I - \mathbf{z}\mathbf{z}^T) \mathbf c } } <t \right| \mathbf{d} \right)\right] \\
	& = E \left[ P \left( t_{v-1, W} <t  \right)\right]
	\end{align*}
	where $t_{v-1, W}$ is a noncentral $t$-distribution with $v-1$ degrees of freedom and noncentrality parameter $W = \frac{\mu}{\sigma} \sqrt{\mathbf{d}^T \mathbf{M} \mathbf{d}} \overset{d}{=} c \chi_{v}$ for $c = \frac{ \sigma_{xy}^2}{\sqrt{ \sigma_{x}^2 \sigma_{y}^2 - \sigma_{xy}^4}}.$ By setting $c=0$, we get
	$
	P_{H_0}(T_u < t) \rightarrow P \left( t_{v-1} <t \right).
	$
\end{proof}

\subsection{Proof of Proposition \ref{prop:LarA}}
\begin{proof}
Notice that
$$
\phi = \frac{\phi_0}{\sqrt{v}} \Rightarrow c = \frac{\phi_{0}}{\sqrt{v - \phi_0^2 }} = \frac{\phi_{0}}{\sqrt{v}}\left( 1 + O\left( \frac{1}{v} \right) \right).
$$

Next, by the definition of non-central $t$-distribution,
\begin{align*}
P \left( t_{v-1, u} <t  \right) = & P \left(  \frac{Z + u}{\sqrt{\chi^2_{v-1}/(v-1)}} <t  \right) \\
= &  P \left(  Z  <t \sqrt{\chi^2_{v-1}/(v-1)} - u  \right) \\
= & E \left[ P \left( \left. Z  <t \sqrt{\chi^2_{v-1}/(v-1)} - u \right|  \chi^2_{v-1} \right)\right] \\
= & E \left[ \Phi \left(   t \sqrt{\frac{\chi^2_{v-1}}{v-1}} - u  \right)  \right],
\end{align*}
where $\Phi$ is the cdf of standard normal. For notational convenience, set
$$
g(u) = E \left[ \Phi \left(   t \sqrt{\frac{\chi^2_{v-1}}{v-1}} - u  \right)  \right].
$$
Notice that $ P \left( t_{v-1, W} <t  \right) =  g(W) $. By the following asymptotic series [see \cite{laforgia2012asymptotic,tricomi1951asymptotic}],
\begin{align*}
\frac{\Gamma (J+1/2)}{\Gamma (J)} & = \sqrt{J} \left( 1 - \frac{1}{8J} + \frac{1}{128J^2} + \frac{5}{1024 J^3} - \frac{21}{ 32768J^4 } + \cdots \right)\\
& = \sqrt{J} \left(1 + O \left(\frac{1}{J} \right) \right),
\end{align*}
we can get,
\begin{align*}
& E \left[ (W- \phi_{0}) \right] \\
 = & \frac{\phi_{0}}{\sqrt{v}}\left( 1 + O\left( \frac{1}{v} \right) \right) \sqrt{2} \frac{\Gamma((v+1)/2)}{\Gamma (v/2)} - \phi_{0} \\
 = & \phi_{0} \left(1 + O \left(\frac{1}{v} \right) \right) - \phi_{0} \\
 = & O \left(\frac{1}{v} \right),
\end{align*}
as well as
\begin{align*}
& E \left[ (W - \phi_{0})^{2} \right] \\
 = & \phi_{0}^2 E \left[  \left(\frac{\chi_{v}}{ \sqrt{v} }\left( 1 + O\left( \frac{1}{v} \right) \right) - 1 \right)^{2} \right] \\
 = & \phi_{0}^2 E \left[ \frac{\chi_{v}^2}{ v }\left( 1 + O\left( \frac{1}{v} \right) \right) - 2 \frac{\chi_{v}}{\sqrt{v}}\left( 1 + O\left( \frac{1}{v} \right) \right) + 1 \right] \\
 = & \phi_{0}^2 \left\lbrace \left( 1 + O\left( \frac{1}{v} \right) \right) - 2 \left(1 + O \left(\frac{1}{v} \right) \right) + 1 \right\rbrace   \\
 = & O \left(\frac{1}{v} \right),
\end{align*}
and
\begin{align*}
&E \left[ W (W - \phi_{0})^{2} \right] \\
  = & \phi_{0}^3 E \left[ \frac{\chi_{v}}{\sqrt{v}}\left( 1 + O\left( \frac{1}{v} \right) \right) \left(\frac{\chi_{v}}{ \sqrt{v} }\left( 1 + O\left( \frac{1}{v} \right) \right) - 1 \right)^{2} \right] \\
 = & \phi_{0}^3 E \left[ \frac{\chi_{v}^3}{ v^{3/2} } - 2 \frac{\chi_{v}^2}{v} + \frac{\chi_{v}}{\sqrt{v}} \right]\left( 1 + O\left( \frac{1}{v} \right) \right) \\
 = & \phi_{0 }^3 \Bigg\{ \frac{(v+1)}{v^{3/2}} \sqrt{v} \left(1 + O \left(\frac{1}{v} \right) \right)  - 2 + 1 + O \left(\frac{1}{v} \right) \Bigg\} \left( 1 + O\left( \frac{1}{v} \right) \right) \\
 = & O \left(\frac{1}{v} \right).
\end{align*}
We note that
\begin{align*}
\frac{\partial }{ \partial u } \Phi \left( t \sqrt{\frac{\chi^2_{v-1}}{v-1}} - u \right) & = - \phi \left(  t \sqrt{\frac{\chi^2_{v-1}}{v-1}} - u  \right)  \\
\frac{\partial^2 }{ \partial u^2 } \Phi \left(  t \sqrt{\frac{\chi^2_{v-1}}{v-1}} - u  \right) & = -\left(  t \sqrt{\frac{\chi^2_{v-1}}{v-1}} - u  \right) \phi \left(  t \sqrt{\frac{\chi^2_{v-1}}{v-1}} - u  \right).
\end{align*}
Thus,
\begin{align*}
|g^{(2)}(u)|
& = \left| \frac{\partial^2 }{ \partial u^2 } E \left[ \Phi \left( t \sqrt{\frac{\chi^2_{v-1}}{v-1}} - u  \right)  \right] \right| \\
& = \left|  E \left[ \frac{\partial^2 }{ \partial u^2 } \Phi \left(  t \sqrt{\frac{\chi^2_{v-1}}{v-1}} - u  \right) \right] \right| \\
& = \left|  E \left[ -\left(   t \sqrt{\frac{\chi^2_{v-1}}{v-1}} - u  \right) \phi \left(   t \sqrt{\frac{\chi^2_{v-1}}{v-1}} - u   \right)  \right] \right| \\
& \leq   E \left[\left| -\left(   t \sqrt{\frac{\chi^2_{v-1}}{v-1}} - u  \right) \right| \phi \left(  t \sqrt{\frac{\chi^2_{v-1}}{v-1}} - u  \right)  \right]  \\
&  \leq   E \left[ \left(  \left| t \right|  \sqrt{\frac{\chi^2_{v-1}}{v-1}} + u  \right) \phi \left(  t \sqrt{\frac{\chi^2_{v-1}}{v-1}} - u  \right)    \right]  \\
& <  E \left[ \left(  |t| \sqrt{\frac{\chi^2_{v-1}}{v-1}} + |u|  \right)  \right]  \\
& \leq    \left(  |t| E\sqrt{\frac{\chi^2_{v-1}}{v-1}} + |u| \right)      \\
& \leq  \sqrt{2} |t| + |u|.
\end{align*}
Next, we can bound the following integral,
\begin{align*}
&\left| \int_{0}^{1} \int_{0}^{1} a g^{(2)}(\phi_{0} + ab(W - \phi_{0})) db da \right| \\
\leq &  \int_{0}^{1}  \int_{0}^{1} \left|  a g^{(2)}(\phi_{0} + ab(W - \phi_{0})) \right| db da \\
\leq & \int_{0}^{1}  \int_{0}^{1} \sqrt{2} |t| + | \phi_{0} + ab(W - \phi_{0}) | db da \\
\leq & \int_{0}^{1}  \int_{0}^{1} \sqrt{2}|t| +  \phi_{0} + |W|  db da \\
= & \sqrt{2}|t| +  \phi_{0} + W .
\end{align*}
To calculate $ E \left[ P \left( t_{v-1, W} <t \right)\right] = E \left[ g(W) \right] $, taking the Taylor expansion of $g(W)$ around $\phi_{0}$, the asymptotic mean of $W$, we get
\begin{align*}
=&  E \left[ g(W)  \right] \\
=&   g(\phi_{0})
+  g^{(1)}(\phi_{0})E \left[   \left( W - \phi_{0} \right) \right] \\
& + E \left[ \int_{0}^{1} \int_{0}^{1} a g^{(2)}(\phi_{0} + ab(W - \phi_{0})) db da \left( W - \phi_{0} \right)^2  \right] \\
=&  P \left( t_{v-1, \phi_{0}} <t  \right) + O\left(\frac{1}{v}\right) \\
& + E \left[  \int_{0}^{1} \int_{0}^{1} a g^{(2)}(\phi_{0} +   ab(W - \phi_{0})) db da \left( W - \phi_{0} \right)^2  \right].
\end{align*}
Notice that,
\begin{align*}
& \left| E \left[ \int_{0}^{1} \int_{0}^{1} a g^{(2)}(\phi_{0} + ab(W - \phi_{0})) db da \left( W - \phi_{0} \right)^2  \right] \right| \\
\leq & E \left[ \left|  \int_{0}^{1} \int_{0}^{1} a g^{(2)}(\phi_{0} + ab(W - \phi_{0})) db da \left( W - \phi_{0} \right)^2 \right|   \right] \\
\leq & E \left[ (\sqrt{2} |t| + \phi_{0}+ W) \left( W - \phi_{0} \right)^2   \right] \\
\leq &  (\sqrt{2}|t| + \phi_{0}) E \left[  \left( W - \phi_{0} \right)^2  \right] +  E \left[ W \left( W - \phi_{0} \right)^2  \right] \\
= & O \left(\frac{1}{v} \right).
\end{align*}
In conclusion, we have $E \left[ P \left( t_{v-1, W} <t \right)\right] = P \left( t_{v-1, \phi_{0 }} <t \right) +  O \left(\frac{1}{v} \right).$ Since $t_{v-1}^{(\alpha)} \rightarrow Z^{(\alpha)}$ as $n \rightarrow \infty$, where $Z^{(\alpha)}$ is the $(1-\alpha)$th percentile of standard normal, $t_{v-1}^{(\alpha)}$ is bounded. Then, all the above analysis still holds if we replace $t$ with $t_{v-1}^{\alpha}$.
\end{proof}

Let $B(\cdot, \cdot)$ denote the beta function and $I_y ( \cdot , \cdot)$ denote the regularized incomplete beta function. In the following, we express $ E \left[ P \left( t_{v-1, W} \leq t \right)\right] $ as a sum of infinite series.
\begin{lemma}
	\label{lem:exact}
	$E \left[ P \left( t_{v-1, W} \leq t \right)\right]$ can be calculated exactly as
	\begin{multline*}
	E \left[  P \left( t_{v-1, W} <t  \right) \right]
	= \left(\frac{1}{c^2+1} \right)^{v/2}  \Bigg\{
	P(t_{v-1} \leq t) + \Bigg. \\ \Bigg. \sum\limits_{j=1}^{\infty}  \left( \frac{c^2 }{c^2 + 1} \right)^{j/2}  \frac{1}{jB(j/2, v/2)} \left( (-1)^j + I_{\frac{t^2}{t^2+v-1}} (\frac{j+1}{2}, \frac{v-1}{2}) \right)
	\Bigg\} .
	\end{multline*}
\end{lemma}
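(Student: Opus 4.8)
The plan is to reduce the claim to averaging the distribution function of a noncentral $t$ variable against a chi-distributed noncentrality and then to evaluate the resulting mixtures in closed form using chi-square moments. From the proof of Proposition~\ref{prop:exactT} the noncentrality satisfies $W \overset{d}{=} c\,\chi_{v}$ with $c=\sigma_{xy}^{2}/\sqrt{\sigma_{x}^{2}\sigma_{y}^{2}-\sigma_{xy}^{4}}$, so writing $\delta=c\chi_{v}$ and $U=\chi_{v}^{2}\sim\chi^{2}_{v}$ we have $E[P(t_{v-1,W}\le t)]=E_{\delta}[P(t_{v-1,\delta}\le t)]$. First I would invoke the classical Poisson--beta expansion of the noncentral $t$ distribution function: for $t\ge 0$ (the case $t<0$ is symmetric),
\begin{equation*}
P(t_{v-1,\delta}\le t)=\Phi(-\delta)+\tfrac12\sum_{j\ge 0}\Big[p_{j}(\delta)\,I_{x}\big(\tfrac{2j+1}{2},\tfrac{v-1}{2}\big)+q_{j}(\delta)\,I_{x}\big(j+1,\tfrac{v-1}{2}\big)\Big],
\end{equation*}
where $x=t^{2}/(t^{2}+v-1)$, $p_{j}(\delta)=e^{-\delta^{2}/2}(\delta^{2}/2)^{j}/j!$ and $q_{j}(\delta)=\delta\,e^{-\delta^{2}/2}(\delta^{2}/2)^{j}/(\sqrt2\,\Gamma(j+\tfrac32))$.

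Next I would average term by term over $\delta=c\chi_{v}$; since every summand is nonnegative for $t\ge 0$, Tonelli/monotone convergence legitimizes the exchange of sum and expectation. The single computational input is the moment identity
\begin{equation*}
E\big[U^{s}e^{-c^{2}U/2}\big]=2^{s}\,\frac{\Gamma(s+v/2)}{\Gamma(v/2)}\,(1+c^{2})^{-(s+v/2)},\qquad U\sim\chi^{2}_{v},
\end{equation*}
obtained directly from the $\chi^{2}_{v}$ density. Taking $s=m$ yields $E[p_{m}(c\chi_{v})]=\frac{\Gamma(m+v/2)}{m!\,\Gamma(v/2)}\big(\tfrac{c^{2}}{1+c^{2}}\big)^{m}(1+c^{2})^{-v/2}$, while $\chi_{v}U^{m}=U^{m+1/2}$ and $s=m+\tfrac12$ give the analogous closed form for $E[q_{m}(c\chi_{v})]$. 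Using $B(a,b)=\Gamma(a)\Gamma(b)/\Gamma(a+b)$ and $\Gamma(m+\tfrac32)=(m+\tfrac12)\Gamma(m+\tfrac12)$, these reduce exactly to the coefficients $(1+c^{2})^{-v/2}\big(\tfrac{c^{2}}{c^{2}+1}\big)^{J/2}\big/\big(J\,B(J/2,v/2)\big)$ of the statement, with the $p$-series populating the even indices $J=2m$ (carrying $I_{x}(\tfrac{J+1}{2},\tfrac{v-1}{2})=I_{x}(m+\tfrac12,\tfrac{v-1}{2})$) and the $q$-series the odd indices $J=2m+1$ (carrying $I_{x}(\tfrac{J+1}{2},\tfrac{v-1}{2})=I_{x}(m+1,\tfrac{v-1}{2})$). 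Merging the two subseries into one sum over $J\ge 1$ then reproduces all the incomplete-beta terms in the claim.

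It remains to account for the bare constants. I would peel off the $m=0$ term of the $p$-series and combine it with the averaged offset $E[\Phi(-c\chi_{v})]$, using $P(t_{v-1}\le t)=\tfrac12+\tfrac12 I_{x}(\tfrac12,\tfrac{v-1}{2})$ (for $t\ge0$) to assemble the leading factor $(1+c^{2})^{-v/2}P(t_{v-1}\le t)$; the residual constants must then match the $(-1)^{J}$ pieces in each bracket. This identification --- equivalently, showing $E[\Phi(-c\chi_{v})]=(1+c^{2})^{-v/2}\big[\tfrac12+\sum_{J\ge1}(\tfrac{c^{2}}{c^{2}+1})^{J/2}(-1)^{J}/(J\,B(J/2,v/2))\big]$ --- is the main obstacle; I expect to establish it either by expanding $E[\Phi(-c\chi_{v})]=P(Z+c\chi_{v}\le 0)$ in its own chi-square series or, more economically, by pinning the constants down through the normalization $\lim_{t\to\infty}E[P(t_{v-1,W}\le t)]=1$, which forces $\sum_{m\ge1}\binom{m+v/2-1}{m}\big(\tfrac{c^{2}}{c^{2}+1}\big)^{m}=(1+c^{2})^{v/2}-1$ via the negative-binomial series. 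Care with the sign of $t$ and with the convergence of the interchanged series is the remaining bookkeeping.
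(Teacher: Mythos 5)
Your route is genuinely different from the paper's. The paper starts from Walck's single-series representation of the noncentral $t$ CDF, in which each summand already carries the factor $(-1)^j+I_z(\tfrac{j+1}{2},\tfrac{v-1}{2})$; it then evaluates $E[W^je^{-W^2/2}]$ for $W\overset{d}{=}c\chi_v$ by a generalized-gamma integral and cleans up the coefficients with the Gamma duplication formula, so the constants $(-1)^j$ and the incomplete-beta terms are produced in one stroke. You instead start from the Lenth-type Poisson--beta mixture (two subseries $p_j,q_j$ plus the offset $\Phi(-\delta)$) and average termwise using $E[U^se^{-c^2U/2}]$. Your bookkeeping for the incomplete-beta part is correct: $\tfrac12 E[p_m]$ and $\tfrac12 E[q_m]$ reduce, via $m\Gamma(m)=\Gamma(m+1)$ and $(2m+1)\Gamma(m+\tfrac12)=2\Gamma(m+\tfrac32)$, exactly to the coefficients $(1+c^2)^{-v/2}(\tfrac{c^2}{c^2+1})^{J/2}/(J\,B(J/2,v/2))$ at $J=2m$ and $J=2m+1$, and the $m=0$ term of the $p$-series supplies the $\tfrac12 I_z(\tfrac12,\tfrac{v-1}{2})$ piece of $(1+c^2)^{-v/2}P(t_{v-1}\le t)$. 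What the paper's choice of expansion buys is that no separate treatment of the constant terms is needed; what yours buys is a more standard starting point, at the price of having to prove the extra identity $E[\Phi(-c\chi_v)]=(1+c^2)^{-v/2}\bigl[\tfrac12+\sum_{j\ge1}(\tfrac{c^2}{c^2+1})^{j/2}(-1)^j/(jB(j/2,v/2))\bigr]$.

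That identity is where your argument is not yet closed, and one of your two proposed fixes does not work. The normalization $\lim_{t\to\infty}E[P(t_{v-1,W}\le t)]=1$ only constrains the even-indexed constants, because $(-1)^j+1=0$ annihilates every odd term in that limit; it therefore cannot "pin down" the full bracket. The first route does work but needs to be carried out: since $E[\Phi(-c\chi_v)]=P(Z\le -c\chi_v)=P(t_v>c\sqrt v)=\tfrac12 I_{1/(1+c^2)}(\tfrac v2,\tfrac12)$, the required statement is a concrete series expansion of a central-$t$ tail probability in powers of $c/\sqrt{1+c^2}$ (for $v=1$ it reduces to the classical series $\tfrac{\arcsin s}{\sqrt{1-s^2}}=\sum_m\tfrac{(2s)^{2m+1}(m!)^2}{2(2m+1)!}$), and it must be derived independently --- note that simply "evaluating the lemma at $t=0$" is circular, since $\Phi(-\delta)=P(t_{v-1,\delta}\le0)$. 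With that identity supplied, your proof is complete and equivalent to the paper's.
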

\begin{proof}
	Notice that from \cite{walck1996hand}, the CDF of non-central $t$-distribution for $t \geq 0$ can be written as
	\begin{multline*}
	P \left( t_{v-1, W} <t \right)   = \frac{1}{2 \sqrt{\pi}} \times \\ \sum\limits_{j=0}^{\infty} \frac{2^{\frac{j}{2}}}{j!} W^j \exp \left\lbrace- \frac{W^2}{2}  \right\rbrace \Gamma \left(\frac{j+1}{2}\right) \left( (-1)^j + I_{z} \left(\frac{j+1}{2}, \frac{v-1}{2}\right) \right),
	\end{multline*}
	where
	\begin{align*}
	z & =\frac{t^2}{t^2+v-1}, \; v= \frac{n(n-3)}{2},\\
	I_{y} & (\cdot, \cdot)  \text{ is the regularized incomplete beta function} ,\\
	W & = \frac{\mu}{\sigma} \sqrt{\mathbf{d}^T \mathbf{M} \mathbf{d}} \overset{d}{=} c \chi_{v}, c = \frac{ \sigma_{xy}^2}{\sqrt{ \sigma_{x}^2 \sigma_{y}^2 - \sigma_{xy}^4}}.
	\end{align*}
	Next, we calculate the expectation by constructing a generalized gamma distribution,
	\begin{align*}
	& E \left[ W^j \exp \left\lbrace- \frac{W^2}{2} \right\rbrace \right] \\
	= & \int_{0}^{\infty} w^j \exp \left\lbrace- \frac{w^2}{2}  \right\rbrace \frac{1}{c} \frac{1}{2^{v/2 -1} \Gamma (v/2)} \left(\frac{w}{c}\right)^{v-1} \exp \left\lbrace - \frac{w^2}{2 c^2}  \right\rbrace dw \\
	= &  \frac{1}{c^{v}} \frac{1}{2^{v/2 -1} \Gamma (v/2)} \int_{0}^{\infty} \exp \left\lbrace - \left( \frac{w}{\sqrt{2c^2 /(c^2 + 1)}} \right)^2 \right\rbrace w^{j + v -1} dw \\
	= & \frac{1}{c^{v}} \frac{1}{2^{v/2 -1} \Gamma (v/2)} \frac{\Gamma ( j/2 + v/2 ) (\sqrt{2c^2 /(c^2 + 1)})^{j + v}}{2} \\
	= & \frac{(\sqrt{2c^2 /(c^2 + 1)})^{j + v}}{c^{v}} \frac{1}{2^{v/2 }} \frac{\Gamma ( j/2 + v/2 )}{\Gamma (v/2)}.
	\end{align*}
	Then,
	\begin{multline*}
	E \left[  P \left( t_{v-1, W} <t  \right) \right]
	=   \frac{1}{2 \sqrt{\pi}} \left( \sqrt{\frac{1}{c^2+1}} \right)^{v} \times \\ \sum\limits_{j=0}^{\infty} \left( \frac{4c^2 }{c^2 + 1} \right)^{\frac{j}{2}} \frac{\Gamma((j+1)/2)\Gamma ( j/2 + v/2 )}{j!\Gamma (v/2)}\left( (-1)^j + I_{z} (\frac{j+1}{2}, \frac{v-1}{2}) \right).
	\end{multline*}
	According to the gamma duplicate formula,
	$$
	\Gamma \left( \frac{j+1}{2} \right) = \frac{\sqrt{\pi}}{2^j} \frac{\Gamma (j+1)}{\Gamma (j/2 + 1)},
	$$
	which further implies that
	\begin{align*}
	\frac{\Gamma((j+1)/2)\Gamma ( j/2 + v/2 )}{j!\Gamma (v/2)} & =  \frac{\sqrt{\pi}}{2^j} \frac{\Gamma (j+1)}{\Gamma (j/2 + 1)} \frac{\Gamma ( j/2 + v/2 )}{j!\Gamma (v/2)}  \\
	& = \left\lbrace
	\begin{array}{lc}
	\sqrt{\pi}, & j=0 \\
	\frac{\sqrt{\pi}}{2^{j-1}} \frac{1}{j \Gamma (j/2 )} \frac{\Gamma ( j/2 + v/2 )}{\Gamma (v/2)}, & j \geq 1
	\end{array}
	\right. \\
	&  = \left\lbrace
	\begin{array}{lc}
	\sqrt{\pi}, & j=0 \\
	\frac{\sqrt{\pi}}{j2^{j-1}} \frac{1}{B(j/2, v/2)}, & j \geq 1
	\end{array}
	\right.
	\end{align*}
	where $B(\cdot, \cdot)$ is the beta function. Then, the expectation can be further simplified as
	\begin{multline*}
	E \left[  P \left( t_{v-1, W} <t \right) \right]
	= \frac{1}{2 } \left(\frac{1}{c^2+1} \right)^{v/2}  \left(1 + I_{z} (\frac{1}{2}, \frac{v-1}{2}) \right) + \\ \left( \frac{1}{c^2+1} \right)^{v/2} \sum\limits_{j=1}^{\infty}  \left( \frac{c^2 }{c^2 + 1} \right)^{\frac{j}{2}}  \frac{1}{j B(j/2, v/2)} \left( (-1)^j + I_{z} (\frac{j+1}{2}, \frac{v-1}{2}) \right).
	\end{multline*}
	Notice that
	$$
	\frac{1}{2} \left(1 + I_{z} (\frac{1}{2}, \frac{v-1}{2}) \right)  = P(t_{v-1} \leq t).
	$$
	Thus,
	\begin{multline*}
	E \left[  P \left( t_{v-1, W} <t  \right) \right]
	= \left(\frac{1}{c^2+1} \right)^{v/2}  \Bigg\{
	P(t_{v-1} \leq t) + \Bigg. \\ \Bigg. \sum\limits_{j=1}^{\infty}  \left( \frac{c^2 }{c^2 + 1} \right)^{j/2}  \frac{1}{jB(j/2, v/2)} \left( (-1)^j + I_{z} (\frac{j+1}{2}, \frac{v-1}{2}) \right)
	\Bigg\} .
	\end{multline*}
%
\end{proof}

\subsection{Proof of Proposition \ref{signal:normal}}
\begin{proof}
	Since we have
	\begin{align*} \left(
	\begin{array}{c}
	X \\
	Y
	\end{array} \right)  \sim N \left(\mathbf 0, \left(
	\begin{array}{cc}
	\mathbf{I}_{p} & \bm{\Sigma}_{XY}  \\
	\bm{\Sigma}_{XY}^T  & \mathbf{I}_{q}
	\end{array} \right)
	\right),
	\end{align*}
	from Theorem 7 in \cite{szekely2007}, by setting $c = \frac{1}{4 (\pi/3 - \sqrt{3} +1)}$, we obtain
	\begin{align*}
	c \leq \frac{dCor^2(x_{i}, y_{j})}{ cor^2(x_{i}, y_{j}) } \leq 1,
	\end{align*}
	$ cov^2(x_{i}, y_{j}) = cor^2(x_{i}, y_{j})$ and $dCor^2 (x_{i}, y_{j}) = dCov^2 (x_{i}, y_{j})\pi/ c.$
	Combine these results, we have
	\begin{align*}
	c \leq \frac{dCov^2(x_{i}, y_{j}) \pi /c}{ cov^2(x_{i}, y_{j}) } \leq 1.
	\end{align*}
	Notice also that $ dCov^2(x_{i}, x_{i}) =dCov^2(y_{j}, y_{j}) = c/\pi $ and $ cov^2(x_{i}, x_{i}) =cov^2(y_{j}, y_{j}) = 1 $. We finally get $
	0.89^2 \phi_{2}  \leq  \phi_{1} \leq  \phi_{2}.$
\end{proof}

\subsection{Proof of Proposition \ref{prop:uni}}
\begin{proof}
	(i)
	When $k(x,y) = l(x,y) = |x-y|^2$,
	\begin{align*}
	& k_{st}(i) = -2 (x_{si} - E(x_{si}))(x_{ti} - E(x_{ti})), \\
	& l_{st}(j) = -2 (y_{sj} - E(y_{sj}))(y_{tj} - E(y_{tj})).
	\end{align*}
	Thus, letting $\mathbf D_X (i) = (x_{si}x_{ti})_{s,t=1}^n$ and $\mathbf D_Y(j) = (y_{sj}y_{tj})_{s,t=1}^n$, we have
	\begin{align*}
	uCov^2_{n}(\mathbf X, \mathbf Y) &  = \frac{1}{\sqrt{pq}}\sum^{p}_{i=1}\sum^{q}_{j=1} (  \widetilde{\mathbf K}(i) \cdot \widetilde{\mathbf L}(j) ) \\
	& = \frac{1}{\sqrt{pq}}\sum^{p}_{i=1}\sum^{q}_{j=1} 4 (  \widetilde{\mathbf D}_X(i) \cdot \widetilde{\mathbf D}_Y(j) ) \\
	=  & 4 \frac{1}{\sqrt{pq}}\sum\limits_{i=1}^p \sum\limits_{j=1}^q  \left\lbrace  \frac{1}{\binom{n}{2}} \frac{1}{2!} \sum\limits_{(s,t) \in i_{2}^{n}}  x_{si} x_{ti} y_{sj} y_{tj} + \right.  \\
	& \quad  \left.  \frac{1}{\binom{n}{4}} \frac{1}{4!} \sum\limits_{(s,t,u,v) \in \mathbf i_{4}^{n}} x_{si} x_{ti} y_{uj} y_{vj}
	-\frac{2}{\binom{n}{3}} \frac{1}{3!} \sum\limits_{(s,t,u) \in \mathbf i_{3}^{n}} x_{si} x_{ti} y_{sj} y_{uj} \right\rbrace \\
	= & 4\frac{1}{\sqrt{pq}} \sum_{i=1}^p \sum_{j=1}^q cov_n^2(\mathcal X_i, \mathcal Y_j).
	\end{align*}
	Thus,
	\begin{align*}
		dCov^2_n(\mathbf X, \mathbf Y) & =  \frac{1}{\tau} \sum_{i=1}^p \sum_{j=1}^q cov_n^2(\mathcal X_i,\mathcal Y_j)  + {\cal R}'_n = \frac{1}{ 4} \frac{\sqrt{pq}}{\tau} uCov^2_n(\mathbf X, \mathbf Y) + {\cal R}'_n
	\end{align*}
	and
	\begin{align*}
	& \tau \times hCov^2_n(\mathbf X, \mathbf Y) \\
	 = &  f^{(1)} \left( \frac{\tau_{X}}{\gamma_{\mathbf X}} \right) g^{(1)} \left( \frac{\tau_{Y}}{ \gamma_{\mathbf Y}} \right)   \frac{\tau_{X}}{\gamma_{\mathbf X}} \frac{\tau_{Y}}{\gamma_{\mathbf Y}}  \frac{1}{\tau} \sum\limits_{i=1}^{p} \sum\limits_{j=1}^{q} cov_{n}^2 (\mathcal X_{i}, \mathcal Y_{j}) + \mathcal{R}''_{n} \\
	=&  f^{(1)} \left( \frac{\tau_{X}}{\gamma_{\mathbf X}} \right)  g^{(1)} \left( \frac{\tau_{Y}}{ \gamma_{\mathbf Y}} \right)   \frac{\tau_{X}}{\gamma_{\mathbf X}} \frac{\tau_{Y}}{\gamma_{\mathbf Y}}  \frac{1}{ 4 } \frac{\sqrt{pq}}{\tau} uCov^2_n(\mathbf X, \mathbf Y) + {\cal R}''_n.
	\end{align*}

(ii) 	When $k(x,y) = l(x,y) = |x-y|$, we have
\begin{align*}
\widetilde{\mathbf K}(i) = \widetilde{\mathbf K}_1(i) - \widetilde{\mathbf K}_2(i) - \widetilde{\mathbf K}_3(i) + \widetilde{\mathbf K}_4(i) = \widetilde{\mathbf K}_1(i),
\end{align*}
where
\begin{align*}
& \mathbf{K}_1(i) = (k(x_{si}, x_{ti}))_{s,t=1}^n, \mathbf{K}_2(i) = (E[k(x_{si},x_{ti})|x_{si}])_{s,t=1}^n, \\
& \mathbf{K}_3(i) = (E[k(x_{si},x_{ti})|x_{ti}])_{s,t=1}^n, \mathbf{K}_4(i) = (E[k(x_{si},x_{ti})])_{s,t=1}^n.
\end{align*}
Similarly, $\widetilde{\mathbf L}(j) = \widetilde{\mathbf L}_1(j)$ with $ \mathbf{L}_1(j) = (l(y_{sj}, l_{tj}))_{s,t=1}^n .$
Then, we have
\begin{align*}
uCov^2_{n}(\mathbf X, \mathbf Y) &  = \frac{1}{\sqrt{pq}}\sum^{p}_{i=1}\sum^{q}_{j=1} (  \widetilde{\mathbf K}_1(i) \cdot \widetilde{\mathbf L}_1(j) ) \\
& = \frac{1}{\sqrt{pq}}\sum^{p}_{i=1}\sum^{q}_{j=1} dCov_{n}^2(\mathcal X_i, \mathcal Y_j) \\
&=\frac{1}{\sqrt{pq}}\frac{1}{\sqrt{\binom{n}{2}}}  mdCov_{n}^2(\mathbf X, \mathbf Y).
\end{align*}
\end{proof}

\subsection{Proof of Corollary \ref{cor:uni}}
\begin{proof}
	For any fixed $t$ and each $R \in \{ dCov, hCov, mdCov \}$, Proposition \ref{prop:uni} and Theorem \ref{thm:key} imply that
	\begin{align*}
	T_R \overset{d}{\rightarrow}   \sqrt{v-1} \frac{\varUpsilon}{ \sqrt{1 - (\varUpsilon)^{2} }}, 	\text{ where } \varUpsilon=\frac{ \mathbf{c}^T \mathbf M \mathbf{d} }{ \sqrt{ \left( \mathbf{c}^T \mathbf M \mathbf{c} \right) \left( \mathbf{d}^T \mathbf M \mathbf{d} \right) }}.
	\end{align*}
	Then the results follow similarly from the proof of Proposition \ref{prop:exactT}.
\end{proof}

\subsection{Proof of Remark \ref{rem:proof2}}
\label{App:proofRemk2}
\begin{proof}
For notational convenience, set $z_{i} = (x_{i} - x_{i}')^{2} - E[ (x_{i} - x_{i}')^{2} ] $. Since $\sup_{i} E(x_{i}^{8}) < \infty $, we get $\sup_{i}E(z_{i}^4) < \infty$. Then, we have
\begin{align*}
\alpha_{p}^2  & \asymp \frac{E \left[ \left(\sum_{i=1}^{p} z_{i} \right)^2 \right]}{p^2} \\
& =  \frac{E \left[ \sum_{s=1}^{p} \sum_{t \in [ s - m, s+ m]} z_{s}z_{t}  \right]}{p^2} \\
& \leq \frac{(2m+1)p}{p^2} \sup_{i}E(z_{i}^2) \\
& = O \left( \frac{m}{p} \right)
\end{align*}
and
\begin{align*}
\gamma_{p}^2  & \asymp \frac{E \left[ \left(\sum_{i=1}^{p} z_{i} \right)^4 \right]}{p^4} \\
& \asymp \frac{m^3p + m^2 p^2}{p^4} \sup_{i}E(z_{i}^4) \\
& = O \left( \frac{m^2}{p^2}\right).
\end{align*}
Similarly, we can show that
\begin{align*}
\beta_{q}^2 = O \left( \frac{m'}{q} \right) \text{ and } \lambda_{q}^2 = O \left( \frac{m'^2}{q^2}\right).
\end{align*}
Next, it follows that
\begin{align*}
\tau \alpha_{p} \lambda_{q} = O \left(\frac{m'\sqrt{m}}{\sqrt{q}} \right) = o(1) .
\end{align*}
The other results can be proved in a similar fashion.
\end{proof}

\subsection{Proof of Theorem \ref{thm:decomp2}}
\begin{proof} (i)\&(ii) Following the proof of Theorem \ref{thm:decomp}, we only need to check that $\mathcal{R}_{n} = o_p(1)$ still holds as $ n \wedge p \wedge q \rightarrow \infty $. Recall that the leading term is $\tau  \times (\widetilde{ \mathbf L}_X \cdot \widetilde{ \mathbf L}_Y  )$ and the remainder term is given as
	\begin{align*}
	\mathcal{R}_n = \frac{1}{2} \tau (\widetilde{ \mathbf L}_X \cdot \widetilde{ \mathbf R}_Y  ) +  \frac{1}{2} \tau (\widetilde{ \mathbf R}_X \cdot \widetilde{ \mathbf L}_Y  ) + \tau (\widetilde{ \mathbf R}_X \cdot \widetilde{ \mathbf R}_Y  ).
	\end{align*}
	Then, using Equation \eqref{eq:defDcov}, we have
	\begin{align*}
	(\widetilde{ \mathbf L}_X \cdot \widetilde{ \mathbf R}_Y  )
	=  &   \frac{1}{\binom{n}{2}} \frac{1}{2!} \sum\limits_{(s,t) \in \mathbf i_{2}^{n}} L_X(X_s, X_t) R_Y(Y_s, Y_t)   \\
	&  + \frac{1}{\binom{n}{4}} \frac{1}{4!} \sum\limits_{(s,t,u,v) \in \mathbf i_{4}^{n}} L_X(X_s, X_t) R_Y(Y_u, Y_v) \\
	& -\frac{2}{\binom{n}{3}} \frac{1}{3!} \sum\limits_{(s,t,u) \in \mathbf i_{3}^{n}} L _X(X_s, X_t) R_Y(Y_s, Y_u),
	\end{align*}
	and
	\begin{align*}
	(\widetilde{ \mathbf R}_X \cdot \widetilde{ \mathbf R}_Y  )
	=  &   \frac{1}{\binom{n}{2}} \frac{1}{2!} \sum\limits_{(s,t) \in \mathbf i_{2}^{n}} R_X(X_s, X_t) R_Y(Y_s, Y_t)   \\
	&  + \frac{1}{\binom{n}{4}} \frac{1}{4!} \sum\limits_{(s,t,u,v) \in \mathbf i_{4}^{n}} R_X(X_s, X_t) R_Y(Y_u, Y_v) \\
	& -\frac{2}{\binom{n}{3}} \frac{1}{3!} \sum\limits_{(s,t,u) \in \mathbf i_{3}^{n}} R_X(X_s, X_t) R_Y(Y_s, Y_u).
	\end{align*}
	To show that $ \mathcal{R}_n $ is asymptotically negligible, we consider the events $B_{\mathbf{X}},B_{\mathbf{Y}}$ and their complements $ B_{\mathbf{X}}^{c}, B_{\mathbf{Y}}^{c} $, where
	\begin{align*}
	B_{\mathbf{Y}} = \left\lbrace \min\limits_{1\leq s < t \leq n} \frac{|Y_{s} - Y_{t}|^2}{\tau_X^2} \leq \frac{1}{2} \text{ or } \max\limits_{1\leq s < t \leq n} \frac{|Y_{s} - Y_{t}|^2}{\tau_X^2}  \geq  \frac{3}{2} \right\rbrace .
	\end{align*}
	Then, under Assumption \ref{D4}, $ \text{as } n\wedge p \wedge q \rightarrow \infty $
	\begin{align*}
	P(B_{\mathbf{Y}}) & = P \left( \min\limits_{1\leq s < t \leq n} L_{Y}(Y_{s}, Y_{t}) \leq -\frac{1}{2} \text{ or } \max\limits_{1\leq s < t \leq n} L_{Y}(Y_{s}, Y_{t}) \geq  \frac{1}{2} \right) \\
	& = P\left( \bigcup\limits_{1\leq s < t \leq n} \left\lbrace L_{Y}(Y_{s}, Y_{t}) \leq -\frac{1}{2} \text{ or } L_{Y}(Y_{s}, Y_{t}) \geq \frac{1}{2} \right\rbrace \right) \\
	& \leq \sum\limits_{1\leq s < t \leq n} P \left( | L_{Y}(Y_{s}, Y_{y}) | \geq \frac{1}{2} \right) \\
	& < n^2 P \left( | L_{Y}(Y_{1}, Y_{2}) | \geq \frac{1}{2} \right) \\
	& \leq 4 n^2 E\left[ L_{Y}(Y_{1}, Y_{2})^2 \right] \\
	& = o(1) .
	\end{align*}
	Also notice that $P(B_{\mathbf{Y}}B_{\mathbf{X}}^{c}) \leq P(B_{\mathbf{Y}}) = o(1)$. Similarly, we have $ P(B_{\mathbf{X}}) = o(1), P(B_{\mathbf{X}}B_{\mathbf{Y}}^{c}) = o(1) $ and $ P(B_{\mathbf{Y}}B_{\mathbf{X}}) = o(1)$. By the proof of Proposition \ref{prop:taylor}, the remainder term can be written as
	\begin{align*}
	R_{X}(X_{s},X_{t}) = \int_{0}^1 \int_{0}^{1} v f^{(2)} \left(  uv L_{X}(X_s, X_t) \right) dudv  \times \left( L_{X}(X_s, X_t) \right)^2,
	\end{align*}
	where $f^{(2)}(t) = - \frac{1}{4} (1+t)^{-\frac{3}{2}}$ and similar formula holds for $Y$. Conditioned on the event $B_{\mathbf{X}}^{c}B_{\mathbf{Y}}^{c} $, we can easily show that
	\begin{align}
	\label{eq:2}
	| R_{X}(X_{s},X_{t})  | \leq \frac{\sqrt{2}}{4} \left( L_{X}(X_s, X_t) \right)^2, | R_{Y}(Y_{s},Y_{t})  | \leq \frac{\sqrt{2}}{4} \left( L_{Y}(Y_s, Y_t) \right)^2.
	\end{align}
	Notice that
	\begin{align*}
	&\frac{1}{\binom{n}{2}} \frac{1}{2!} \sum\limits_{(s,t) \in \mathbf i_{2}^{n}} R_X(X_s, X_t) R_Y(Y_s, Y_t) \\
	= & \frac{1}{\binom{n}{2}} \frac{1}{2!} \sum\limits_{(s,t) \in \mathbf i_{2}^{n}} R_X(X_s, X_t) R_Y(Y_s, Y_t) \mathbb{I}_{ \{  B_{\mathbf{X}}^{c}B_{\mathbf{Y}}^{c} \} } \\
	& + \frac{1}{\binom{n}{2}} \frac{1}{2!} \sum\limits_{(s,t) \in \mathbf i_{2}^{n}} R_X(X_s, X_t) R_Y(Y_s, Y_t) \mathbb{I}_{ \{  B_{\mathbf{X}} B_{\mathbf{Y}}^{c} \} } \\
	& + \frac{1}{\binom{n}{2}} \frac{1}{2!} \sum\limits_{(s,t) \in \mathbf i_{2}^{n}} R_X(X_s, X_t) R_Y(Y_s, Y_t) \mathbb{I}_{ \{  B_{\mathbf{X}}^{c} B_{\mathbf{Y}} \} } \\
	& + \frac{1}{\binom{n}{2}} \frac{1}{2!} \sum\limits_{(s,t) \in \mathbf i_{2}^{n}} R_X(X_s, X_t) R_Y(Y_s, Y_t) \mathbb{I}_{ \{  B_{\mathbf{X}} B_{\mathbf{Y}} \} } \\
	= & \rmnum{1} + \rmnum{2} + \rmnum{3 } + \rmnum{4}.
	\end{align*}
	For any $\epsilon >0$,
	$
	P( |\tau \times \rmnum{2}| > \epsilon) \leq P( B_{\mathbf{X}} B_{\mathbf{Y}}^{c} ) = o(1),
	$
	which implies that $ \tau \times \rmnum{2} = o_{p}(1) $. Similarly, $ \tau \times \rmnum{3} = o_{p}(1) $ and $ \tau \times \rmnum{4} = o_{p}(1) $. For term $\rmnum{1}$, by Equation \eqref{eq:2}, we have
	\begin{align*}
	| \rmnum{1}|  \leq &   \left\lbrace   \frac{1}{\binom{n}{2}} \frac{1}{2!} \sum\limits_{(s,t) \in \mathbf i_{2}^{n}} | R_X(X_s, X_t) R_Y(Y_s, Y_t)| \right\rbrace  B_{\mathbf{X}}^{c}B_{\mathbf{Y}}^{c} \\
	\leq & \frac{1}{8} \frac{1}{\binom{n}{2}}  \frac{1}{2!} \sum\limits_{(s,t) \in \mathbf i_{2}^{n}}  L_X(X_s, X_t)^2 L_Y(Y_s, Y_t)^2 \\
	\leq &  \frac{1}{8} \left\lbrace \left( \frac{1}{\binom{n}{2}}  \frac{1}{2!} \sum\limits_{(s,t) \in \mathbf i_{2}^{n}}  L_X(X_s, X_t)^4  \right) \left(\frac{1}{\binom{n}{2}}  \frac{1}{2!} \sum\limits_{(s,t) \in \mathbf i_{2}^{n}}  L_Y(Y_s, Y_t)^4\right) \right\rbrace^{\frac{1}{2}}.
	\end{align*}
	Next, by the Markov's inquality
	\begin{align*}
	P\left( \frac{1}{\binom{n}{2}}  \frac{1}{2!} \sum\limits_{(s,t) \in \mathbf i_{2}^{n}}  L_X(X_s, X_t)^4 > \epsilon \right) & \leq \frac{E \left[\frac{1}{\binom{n}{2}} \frac{1}{2!} \sum\limits_{(s,t) \in \mathbf i_{2}^{n}}   L_X(X_s, X_t)^4 \right]}{\epsilon} \\
	& = \frac{1}{\epsilon}  E \left[ L_X(X_1, X_2)^4 \right] \\
	& = \frac{1}{\epsilon}  \gamma_{p}^2.
	\end{align*}
	Thus, we have $ \frac{1}{\binom{n}{2}}  \frac{1}{2!} \sum\limits_{(s,t) \in \mathbf i_{2}^{n}}  L_X(X_s, X_t)^4 = O_p( \gamma_{p}^2 ) $ and similar proof shows that
	$
	\frac{1}{\binom{n}{2}}  \frac{1}{2!} \sum\limits_{(s,t) \in \mathbf i_{2}^{n}}  L_Y(Y_s, Y_t)^4 = O_p( \lambda_{q}^2 ).
	$
	So, we have $\tau \rmnum{1} = O_{p}( \tau \gamma_{p} \lambda_{q} )
	$
	and
	\begin{align*}
	\tau  \frac{1}{\binom{n}{2}} \frac{1}{2!} \sum\limits_{(s,t) \in \mathbf i_{2}^{n}} R_X(X_s, X_t) R_Y(Y_s, Y_t) = O_{p}( \tau \gamma_{p} \lambda_{q} ).
	\end{align*}
	Similarly, it can be shown that
	\begin{align*}
	&  \tau \frac{1}{\binom{n}{4}} \frac{1}{4!} \sum\limits_{(s,t,u,v) \in \mathbf i_{4}^{n}} R_X(X_s, X_t) R_Y(Y_u, Y_v) =  O_{p}( \tau \gamma_{p} \lambda_{q} ), \\
	& \tau \frac{2}{\binom{n}{3}} \frac{1}{3!} \sum\limits_{(s,t,u) \in \mathbf i_{3}^{n}} R_X(X_s, X_t) R_Y(Y_s, Y_u) = O_{p}( \tau \gamma_{p} \lambda_{q} ).
	\end{align*}
	In conclusion, we have $ \tau (\widetilde{ \mathbf R}_X \cdot \widetilde{ \mathbf R}_Y  ) = O_{p}( \tau \gamma_{p} \lambda_{q} ) $. Similarly, it can be shown that $ \tau (\widetilde{ \mathbf L}_X \cdot \widetilde{ \mathbf L}_Y  ) = O_{p}( \tau \alpha_{p} \beta_{q} ) $, $ \tau (\widetilde{ \mathbf L}_X \cdot \widetilde{ \mathbf R}_Y  ) = O_{p}( \tau \alpha_{p} \lambda_{q} ) $ and $ \tau (\widetilde{ \mathbf R}_X \cdot \widetilde{ \mathbf L}_Y  ) = O_{p}( \tau \gamma_{p} \beta_{q} ) $.
\end{proof}

\subsection{Proof of Theorem \ref{thm:decompHsic2}}
\begin{proof}
(i)\&(ii)  Continuing with the proof of Theorem \ref{thm:decompHsic}, we need to show that $\mathcal{R}_n = o_p (1)$ and $\gamma_{\mathbf X}$ is asymptotically euqal to $\tau_{X}$ as $n \wedge p \wedge q \rightarrow  \infty$ (similar result applies to $\gamma_{\mathbf Y}$ and $\tau_{Y}$). Recall that for all $s \neq t$,
\begin{align*}
L_{X}(X_{s}, X_{t}) = \frac{|X_{s} - X_{t}|^{2} - \tau_{X}^{2}}{ \tau_{X}^{2} }.
\end{align*}
Since for any $\epsilon > 0 $, under Assumption \ref{D4},
\begin{align*}
& P \left( \left| \frac{\text{median}\{|X_{s} - X_{t}|^{2}\} }{ \tau_{X}^{2} } -1  \right| > \epsilon \right) \\
   \leq & P \left( \min\limits_{1\leq s < t \leq n} L_{X}(X_{s}, X_{t}) \leq -\epsilon \text{ or } \max\limits_{1\leq s < t \leq n} L_{X}(X_{s}, X_{t}) \geq  \epsilon \right) \\
 = & P\left( \bigcup\limits_{1\leq s < t \leq n} \left\lbrace L_{X}(X_{s}, X_{t}) \leq -\epsilon \text{ or } L_{X}(X_{s}, X_{t}) \geq \epsilon \right\rbrace \right) \\
 \leq & \sum\limits_{1\leq s < t \leq n} P \left( | L_{X}(X_{s}, X_{t}) | \geq \epsilon \right) \\
 < & n^2 P \left( | L_{X}(X_{1}, X_{2}) | \geq \epsilon \right) \\
 \leq & \frac{1}{\epsilon^2} n^2 E\left[ L_{X}(X_{1}, X_{2})^2 \right] \\
 = & o(1).
\end{align*}
Thus, we have $\frac{\text{median}\{|X_{s} - X_{t}|^{2}\} }{ \tau_{X}^{2} } \overset{p}{\rightarrow} 1$ and
$
\frac{\tau_{X}}{\gamma_{\mathbf X}} = \sqrt{ \frac{ \tau_{X}^{2} }{\text{median}\{|X_{i} - X_{j}|^{2}\} } } \overset{p}{\rightarrow} 1.
$ Similar arguments can also be used to show that $ \frac{\tau_{Y}}{\gamma_{\mathbf Y}} \overset{p}{\rightarrow} 1$.

Notice that conditioned on $B_{\mathbf{X}}^{c}B_{\mathbf{Y}}^{c}$, for all $1 \leq s < t \leq n$, we have
\begin{align}
\label{in:bounded}
|L_{X}(X_{s}, X_{t})| < 1/2 \text{ and }
\frac{1}{2} < \frac{|X_{s} - X_{t}|^2}{\tau_X^2} < \frac{3}{2}.
\end{align}
Next, Inequalities \eqref{eq:2} and \eqref{in:bounded} together imply that
$$
\left| \frac{\tau_{X}}{\gamma_{\mathbf X}} + uv \left\lbrace \frac{L_{X}(X_s, X_t) }{2} + R_X (X_s, X_t) \right\rbrace \frac{\tau_{X}}{\gamma_{\mathbf X}}\right| \leq c, $$
where $c$ is some constant. Since we choose kernels $k$ and $l$ to be the Gaussian or Laplacian kernel, it can be shown that
\begin{align*}
\left| \int_{0}^1 \int_{0}^{1} v f^{(2)} \left( \frac{\tau_{X}}{\gamma_{\mathbf X}} + uv \left\lbrace \frac{L_{X}(X_s, X_t) }{2} + R_X (X_s, X_t) \right\rbrace \frac{\tau_{X}}{\gamma_{\mathbf X}} \right) dudv \right| \leq c',
\end{align*}
where $c'$ is some constant. Then, we can easily see from Equation \eqref{eq:rf} that $| R_{f}(X_{s},X_{t}) | \leq c' L_{X}(X_{s}, X_{t})^2$. Similar result holds for $Y$. Finally, Theorem \ref{thm:decompHsic2} can be shown using similar arguments as in the proof of Theorem \ref{thm:decomp2}.
\end{proof}

\subsection{Proof of Remark \ref{rem:D7}}
\begin{proof}
	When $k(x,y) = l(x,y) = |x-y|^2$,
	\begin{align*}
	& k_{st}(i) = -2 (x_{si} - E(x_{si}))(x_{ti} - E(x_{ti})), \\
	& l_{st}(j) = -2 (y_{sj} - E(y_{sj}))(y_{tj} - E(y_{tj})).
	\end{align*}
	Thus, we have
	\begin{align*}
	& E [U(X_s, X_t)^2] \\
	= & E \left[  \frac{1}{p} \sum\limits_{i=1}^{p} \sum\limits_{j=1}^{p} k_{st}(i) k_{st}(j)  \right] \\
	= & \frac{4}{p}  \sum\limits_{i=1}^{p} \sum\limits_{j=1}^{p} E \left[  (x_{si} - E[x_{si}])(x_{ti} - E[x_{ti}])  (x_{sj} - E[x_{sj}])(x_{tj} - E[x_{tj}]) \right] \\
	= &  \frac{4}{p}  \sum\limits_{i=1}^{p} \sum\limits_{j=1}^{p} cov^2(x_{i}, x_{j}) \\
	= & \frac{4}{p} Tr ( \bm{\Sigma}_{X}^2 ),
	\end{align*}
	and
	\begin{align*}
	& E [U(X_s, X_t)^4] \\
	= & E \left[  \frac{1}{p^2} \sum\limits_{i,j,r,w=1}^{p} k_{st}(i) k_{st}(j) k_{st}(r) k_{st}(w)  \right] \\
	= & \frac{16}{p^2}  \sum\limits_{i,j,r,w=1}^{p} E \Big[  (x_{si} - E[x_{si}])(x_{ti} - E[x_{ti}])  (x_{sj} - E[x_{sj}])(x_{tj} - E[x_{tj}]) \\
	& \quad \quad \quad \quad  \quad \quad  (x_{sr} - E[x_{sr}])(x_{tr} - E[x_{tr}])  (x_{sw} - E[x_{sw}])(x_{tw} - E[x_{tw}])  \Big] \\
	= &  \frac{16}{p^2}  \sum\limits_{i,j,r,w=1}^{p} E^2 \left[ (x_{i} - E[x_{i}]) (x_{j} - E[x_{j}]) (x_{r} - E[x_{r}]) (x_{w} - E[x_{w}]) \right] \\
	\asymp & \frac{m^3p + m^2p^2}{p^2} \sup\limits_{i} E^2(x_{i}^4) \\
	= & O\left( m^2 \right).
	\end{align*}
	Also,
	\begin{align*}
	& E [U(X_s, X_t)U(X_t, X_u)U(X_u, X_v)U(X_v, X_s)] \\
	= & E \left[  \frac{1}{p^2} \sum\limits_{i,j,r,w=1}^{p}  k_{st}(i) k_{tu}(j) k_{uv}(r) k_{vs}(w)  \right] \\
	= & \frac{16}{p^2}  \sum\limits_{i,j,r,w=1}^{p}  E \Big[  (x_{si} - E[x_{si}])(x_{ti} - E[x_{ti}])  (x_{tj} - E[x_{tj}])(x_{uj} - E[x_{uj}]) \\
	& \quad \quad \quad \quad  \quad \quad  (x_{ur} - E[x_{ur}])(x_{vr} - E[x_{vr}])  (x_{vw} - E[x_{vw}])(x_{sw} - E[x_{sw}])  \Big] \\
	= &  \frac{16}{p^2}  \sum\limits_{i,j,r,w=1}^{p} cov(x_{i}, x_{j})cov(x_{j}, x_{r}) cov(x_{r}, x_{w}) cov(x_{w}, x_{i}) \\
	= & \frac{16}{p^2}  Tr (\bm{\Sigma}_{X}^4) \\
	\asymp & \frac{m^3p}{p^2} \sup\limits_{i} E^4(x_{i}^2) \\
	= & O \left(\frac{m^3}{p}\right).
	\end{align*}
\end{proof}

\subsection{Proof of Theorem \ref{thm:key2}}
\begin{proof}
Firstly, the following lemma would be useful.
	\begin{lemma}
		\label{lem:hoeff}
		Under null, we have
		\begin{align*}
		\frac{1}{\mathcal{S}} uCov^2_{n}(\mathbf X, \mathbf Y)   = \frac{1}{\binom{n}{2} \mathcal{S} } \sum\limits_{1\leq s < t \leq n} H \left(Z_{s}, Z_{t} \right) + \mathcal{R}_{n},
		\end{align*}
		where $\sqrt{\binom{n}{2}}\mathcal{R}_{n,p,q} = o_{p}(1)$ as $n \wedge p \wedge q \rightarrow \infty$, $Z_{s} = (X_{s}, Y_{s}) $ and $H( \cdot , \cdot )$ is defined as
		$$
		H \left(Z_{s}, Z_{t} \right) := U(X_{s}, X_{t})V(Y_{s}, Y_{t}) .
		$$
	\end{lemma}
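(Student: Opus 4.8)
The plan is to recognize $uCov_n^2$ as a $\mathcal U$-centered bilinear form built from the \emph{metrics} $U,V$, to peel off the clean degenerate $\mathcal U$-statistic with kernel $H$ by an algebraic expansion, and then to bound the leftover higher-order pieces by second-moment estimates that exploit the degeneracy produced by double centering.

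First I would pass to matrix notation. Set $\mathbf U=(U_{st})_{s,t=1}^n$ and $\mathbf V=(V_{st})_{s,t=1}^n$ with $U_{st}=U(X_s,X_t)$, $V_{st}=V(Y_s,Y_t)$ and zero diagonal (the same hollow-matrix convention used for the distance matrices, consistent with the index ranges appearing in the proof of Theorem~\ref{thm:key}). Since $\mathcal U$-centering is linear, $\tilde U_{st}=p^{-1/2}\sum_{i}\tilde k_{st}(i)$ and $\tilde V_{st}=q^{-1/2}\sum_{j}\tilde l_{st}(j)$, so the computation in the proof of Theorem~\ref{thm:key} gives $uCov_n^2(\mathbf X,\mathbf Y)=(\widetilde{\mathbf U}\cdot\widetilde{\mathbf V})$. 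As $\mathbf U,\mathbf V$ are symmetric with vanishing diagonal, I would apply the algebraic identity \eqref{eq:defDcov} (equivalently Lemma A.1 of \cite{park2015}) to obtain
\begin{equation*}
uCov_n^2(\mathbf X,\mathbf Y)=\frac{1}{\binom n2}\sum_{s<t}U_{st}V_{st}+\frac{1}{\binom n4}\frac{1}{4!}\sum_{(s,t,u,v)\in\mathbf i_4^n}U_{st}V_{uv}-\frac{2}{\binom n3}\frac{1}{3!}\sum_{(s,t,u)\in\mathbf i_3^n}U_{st}V_{su}.
\end{equation*}
The first summand is exactly $\frac{1}{\binom n2}\sum_{s<t}H(Z_s,Z_t)$, so $\mathcal S\,\mathcal R_n$ equals the order-$4$ and order-$3$ remainder $\mathcal U$-statistics.

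It remains to show that these remainders, multiplied by $\sqrt{\binom n2}/\mathcal S$, vanish in probability. Under $H_0$ the families $\{U_{st}\}$ and $\{V_{st}\}$ are independent, and Proposition~\ref{prop:ind} yields the orthogonality $E[U_{st}U_{s't'}]=E[V_{st}V_{s't'}]=0$ whenever $\{s,t\}\neq\{s',t'\}$, together with $E[U_{st}^2]\,E[V_{st}^2]=\mathcal S^2$. Squaring each remainder, factorizing the expectation over $\mathbf X$ and $\mathbf Y$, and retaining only the surviving index matchings, a direct count gives $E[(\sum_{\mathbf i_4^n}U_{st}V_{uv})^2]=O(n^4\mathcal S^2)$ and $E[(\sum_{\mathbf i_3^n}U_{st}V_{su})^2]=O(n^3\mathcal S^2)$. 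Dividing by the $\binom n4$ and $\binom n3$ normalizations yields $\mathcal S\,\mathcal R_n=O_p(\mathcal S/n^2)+O_p(\mathcal S/n^{3/2})$, whence $\sqrt{\binom n2}\,\mathcal R_n=O_p(n^{-1/2})=o_p(1)$, which is the assertion.

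The crux is the two variance counts. The point is that complete degeneracy, i.e.\ $E[U_{st}\mid X_s]=0$ (which Proposition~\ref{prop:ind} encodes), collapses the double sums: any index configuration in which some pair is left unmatched contributes zero, so that only $O(n^4)$ (respectively $O(n^3)$) of the nominally $O(n^8)$ (respectively $O(n^6)$) cross-terms survive. This gap of roughly two powers of $n$ per remainder is precisely what makes the discrepancy of smaller order than $\mathcal S/\sqrt{\binom n2}$, and it is where the null hypothesis and the double-centering construction are both essential. I note that Assumption~\ref{D5} is not needed for this reduction; it enters only afterward, to verify the martingale-CLT conditions for the leading $\mathcal U$-statistic.
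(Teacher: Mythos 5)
Your proof is correct, and it reaches the conclusion by a genuinely more self-contained route than the paper. The paper's own proof first rewrites $uCov_n^2(\mathbf X,\mathbf Y)=(\widetilde{\overline{\mathbf K}}\cdot\widetilde{\overline{\mathbf L}})$ with the aggregated kernels $\overline k_{st}=p^{-1/2}\sum_i k(x_{si},x_{ti})$, observes that this is a $dCov_n^2$-type statistic, and then simply cites Proposition 2.1 of \cite{yao2017testing} to obtain both the leading term $\binom{n}{2}^{-1}\sum_{s<t}U(X_s,X_t)V(Y_s,Y_t)$ and the bound $\sqrt{\binom{n}{2}}\,\mathcal R_{n,p,q}=o_p(1)$. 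You instead expand $(\widetilde{\mathbf U}\cdot\widetilde{\mathbf V})$ directly through the identity \eqref{eq:defDcov} into the order-2, order-3 and order-4 $\mathcal U$-statistics and kill the latter two by explicit variance counts, using the orthogonality $E[U_{st}U_{s't'}]=0$ for $\{s,t\}\neq\{s',t'\}$ inherited from Proposition~\ref{prop:ind} together with the factorization of expectations under $H_0$; your counts ($O(n^4\mathcal S^2)$ and $O(n^3\mathcal S^2)$ surviving matchings, hence $\mathcal S\,\mathcal R_n=O_p(\mathcal S n^{-3/2})$ and $\sqrt{\binom{n}{2}}\,\mathcal R_n=O_p(n^{-1/2})$) are accurate. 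In effect you have reproved, in the special case needed here, the external result the paper leans on; what this buys is a proof that is verifiable within the paper and makes transparent exactly where the null hypothesis and the double-centering degeneracy enter, at the cost of some index bookkeeping. Your closing remark that Assumption~\ref{D5} plays no role in this lemma is also consistent with the paper, which states the lemma under the null alone. The only point worth flagging is the hollow-matrix convention for $\mathbf U,\mathbf V$: the diagonal entries $k_{ss}(i)$ need not vanish, so zeroing the diagonal before $\mathcal U$-centering changes $\widetilde{\mathbf U}$ by a symmetric additive perturbation of the form $c_s+c_t+\mathrm{const}$; since the row sums of a $\mathcal U$-centered matrix vanish, this perturbation is annihilated in the inner product $(\cdot\,\cdot\,\cdot)$, so your identity stands, but a sentence to that effect would make the step airtight.
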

	
	\begin{proof}
		Firstly, sample $uCov$ can be written as
		\begin{align*}
		uCov^2_{n}(\mathbf X, \mathbf Y)  & = \frac{1}{\sqrt{pq}}\sum^{p}_{i=1}\sum^{q}_{j=1} (  \widetilde{\mathbf K}(i) \cdot \widetilde{\mathbf L}(j) ) \\
		& = (\frac{1}{\sqrt{p}}\sum^{p}_{i=1}  \widetilde{\mathbf K}(i) \cdot \frac{1}{\sqrt{q}}\sum^{q}_{j=1}\widetilde{\mathbf L}(j) ) \\
		& = (  \widetilde{\overline{\mathbf K}} \cdot \widetilde{\overline{\mathbf L}}),
		\end{align*}
		where $ \overline{\mathbf K} = (\overline{k}_{st})_{s,t=1}^{n} $, $\overline{\mathbf L} = (\overline{l}_{st})_{s,t=1}^{n}$,
		$
		\overline{k}_{st} = \frac{1}{\sqrt{p}}\sum^{p}_{i=1} k(x_{si},x_{ti})$ and $\overline{l}_{st} = \frac{1}{\sqrt{q}}\sum^{q}_{i=1} l(y_{si},y_{ti}).
		$
		Thus, $ uCov^2_{n}(\mathbf X, \mathbf Y) $ is just $ dCov^2_{n}(\mathbf X, \mathbf Y) $ with kernel $\overline{K}$ defines as $ \overline{K}(X_{s}, X_{t}) = \overline{k}_{st}$ and $\overline{L}(Y_{s}, Y_{t}) = \overline{l}_{st}$. Notice that
		\begin{align*}
		& \overline{K}(X_{s}, X_{t}) - E[\overline{K}(X_{s}, X_{t})| X_{s}] - E[\overline{K}(X_{s}, X_{t})|X_{t}] + E[ \overline{K}(X_{s}, X_{t}) ] = \frac{1}{\sqrt{p}}\sum^{p}_{i=1} k_{st}(i), \\
		& \overline{L}(Y_{s}, Y_{t}) - E[\overline{L}(Y_{s}, Y_{t}) | Y_{s}] - E[\overline{L}(Y_{s}, Y_{t}) | Y_{t}] + E[ \overline{L}(Y_{s}, Y_{t})  ] = \frac{1}{\sqrt{q}}\sum^{q}_{i=1} l_{st}(i),
		\end{align*}
		where $k_{st}(i)$ and $l_{st}(i)$ are the double centered kernel distance defined in Section \ref{sec:uni}. By Proposition 2.1 of \cite{yao2017testing}, we have
		\begin{align*}
		\frac{1}{\mathcal{S}}(  \widetilde{\overline{\mathbf K}} \cdot \widetilde{\overline{\mathbf L}}) & = \frac{1}{\binom{n}{2} \mathcal{S} } \sum\limits_{1\leq s < t \leq n} U(X_{s,n},X_{t,n}) V(Y_{s,n},Y_{t,n}) + \mathcal{R}_{n,p,q} \\
		& = \frac{1}{\binom{n}{2} \mathcal{S} } \sum\limits_{1\leq s < t \leq n} \frac{1}{\sqrt{p}}\sum^{p}_{ i=1} k_{st}(i) \frac{1}{\sqrt{q}}\sum^{q}_{i=1} l_{st}(i) + \mathcal{R}_{n,p,q},
		\end{align*}
		where $ \sqrt{\binom{n}{2}} \mathcal{R}_{n,p,q} = o_{p}(1)$ as $n \wedge p \wedge q \rightarrow \infty$.
	\end{proof}
	By Lemma \ref{lem:hoeff}, we have
	\begin{align*}
	& \sqrt{\binom{n}{2}} \frac{uCov_{n}^{2}(\mathbf{X}, \mathbf{Y})}{\mathcal{S}}
	= \frac{1}{\sqrt{\binom{n}{2}} \mathcal{S}} \sum\limits_{1\leq s < t \leq n} H \left(Z_{s}, Z_{t} \right) + \sqrt{\binom{n}{2}} \mathcal{R}_{n,p,q},
	\end{align*}
	where $ \sqrt{\binom{n}{2}} \mathcal{R}_{n,p,q} = o_{p}(1) .$ By similar proof of Theorem 2.1 in \cite{zhang2018conditional}, under $H_{0}$, we have
	\begin{align*}
	\frac{1}{\sqrt{\binom{n}{2}} \mathcal{S}} \sum\limits_{1\leq s < t \leq n} H \left(Z_{s}, Z_{t} \right) \overset{d}{\rightarrow} N(0,1).
	\end{align*}
\end{proof}

\subsection{Proof of Proposition \ref{prop:exactT2}}
\begin{proof}
	Notice that by the proof of Theorem 2.2 in \cite{zhang2018conditional}, under null
	\begin{align}
	\label{eq:uni}
	\frac{ uCov_{n}^{2}(\mathbf{X}, \mathbf{X}) }{E[ U(X, X')^2 ]} \overset{p}{\rightarrow} 1, \quad \frac{ uCov_{n}^{2}(\mathbf{Y}, \mathbf{Y}) }{E[ V(Y, Y')^2 ]} \overset{p}{\rightarrow} 1.
	\end{align}
	So, by Theorem \ref{thm:key2}
	\begin{align*}
	\sqrt{\binom{n}{2}} \frac{uCov_{n}^{2}(\mathbf{X}, \mathbf{Y})}{\sqrt{ uCov_{n}^{2}(\mathbf{X}, \mathbf{X}) uCov_{n}^{2}(\mathbf{Y}, \mathbf{Y}) }} \overset{d}{\rightarrow} N(0,1),
	\end{align*}
	and also
	\begin{align*}
	\frac{uCov_{n}^{2}(\mathbf{X}, \mathbf{Y})}{\sqrt{ uCov_{n}^{2}(\mathbf{X}, \mathbf{X}) uCov_{n}^{2}(\mathbf{Y}, \mathbf{Y}) }} \overset{p}{\rightarrow} 0.
	\end{align*}
	As a consequence, we have $T_u \overset{d}{\rightarrow} N(0,1).$
\end{proof}

\subsection{Proof of Proposition \ref{prop:uni2}}
\begin{proof}
Based on Theorem \ref{thm:decomp2} and \ref{thm:decompHsic2}, the results follow similarly from the proof of Proposition \ref{prop:uni}.
\end{proof}

\subsection{Proof of Corollary \ref{cor:uni2}}
\begin{proof}
	(i) If $ R = mdCov $, the result follows from Proposition \ref{prop:exactT2} and the following observation
	\begin{align*}
	\sqrt{\binom{n}{2}}  \frac{R_{n}^{2}(\mathbf{X}, \mathbf{Y})}{\sqrt{ R_{n}^{2}(\mathbf{X}, \mathbf{X}) R_{n}^{2}(\mathbf{Y}, \mathbf{Y})}} = \sqrt{\binom{n}{2}} \frac{uCov_{n}^{2}(\mathbf{X}, \mathbf{Y})}{\sqrt{ uCov_{n}^{2}(\mathbf{X}, \mathbf{X}) uCov_{n}^{2}(\mathbf{Y}, \mathbf{Y}) }}.
	\end{align*}
	
	(ii) Recall that when $k(x,y) = l(x,y) = |x-y|^2$,
	$E [U(X_s, X_t)^2] = \frac{4}{p}  Tr ( \bm{\Sigma}_{X}^2 )$ and
	 $E [V(Y_s, Y_t)^2] = \frac{4}{q} Tr ( \bm{\Sigma}_{Y}^2 ).$
	 If $R = hCov$, by  Proposition \ref{prop:uni2}, we have
	 \begin{align*}
	  \sqrt{\binom{n}{2}} \tau \times \frac{R_{n}^{2}(\mathbf{X}, \mathbf{Y})}{\mathcal{S}}  =A_{p}B_{q} \sqrt{\binom{n}{2}}\frac{  uCov^2_n(\mathbf X, \mathbf Y) }{\mathcal{S}} + \sqrt{\binom{n}{2}}\frac{ \mathcal{R}_{n}'' }{\mathcal{S}},
	 \end{align*}
	 where $ A_{p} =  \frac{\sqrt{p}}{ 2 \tau_{X} }  f^{(1)} \left( \frac{\tau_{X}}{\gamma_{\mathbf X}} \right)\frac{\tau_{X}}{\gamma_{\mathbf X}}$ and $B_{q} = \frac{\sqrt{q}}{2 \tau_{Y}} g^{(1)} \left( \frac{\tau_{Y}}{ \gamma_{\mathbf Y}} \right) \frac{\tau_Y}{\gamma_{\mathbf Y}}$. By Theorem \ref{thm:key2},
	\begin{align*}
	A_{p}B_{q} \sqrt{\binom{n}{2}}  \frac{ uCov^2_n(\mathbf X, \mathbf Y) }{\mathcal{S}} \overset{d}{\rightarrow} c N(0,1),
	\end{align*}
	where $c$ is some constant. Also notice that
	\begin{align*}
	\left| \sqrt{\binom{n}{2}}\frac{ \mathcal{R}_{n}'' }{\mathcal{S}} \right| \leq \left| \frac{n \mathcal{R}_{n}'' }{4\sqrt{  \frac{1}{p}  Tr ( \bm{\Sigma}_{X}^2 )  \frac{1}{q}  Tr ( \bm{\Sigma}_{Y}^2 ) }} \right| = o_p(1).
	\end{align*}
	Thus, we have
	\begin{align*}
	\sqrt{\binom{n}{2}} \tau \times \frac{R_{n}^{2}(\mathbf{X}, \mathbf{Y})}{\mathcal{S}} \overset{d}{\rightarrow} c N(0,1).
	\end{align*}
	Next, under Assumption \ref{D4}, by Equation \eqref{eq:uni} and Proposition \ref{prop:uni2}
	\begin{align*}
	& \tau \times \frac{\sqrt{ R_{n}^{2}(\mathbf{X}, \mathbf{X}) R_{n}^{2}(\mathbf{Y}, \mathbf{Y})} }{\mathcal{S}} \\
	= & \sqrt{ \left( \frac{A_{p}^2 uCov^2_n(\mathbf X, \mathbf X) + \mathcal{R}''' }{ E[ U(X, X')^2 ] } \right)\left( \frac{B_{q}^2 uCov^2_n(\mathbf Y, \mathbf Y) + \mathcal{R}'''' }{ E[ U(Y, Y')^2 ] } \right) } \\
	 \overset{p}{\rightarrow} & c.
	\end{align*}
    Notice that Under Assumptions \ref{D1} and \ref{D4}, Proposition \ref{prop:uni2} also holds similarly when $\mathbf{X} = \mathbf{Y}$ or $ \mathbf{Y} = \mathbf{X} $. So $\mathcal{R}'''$ and $\mathcal{R}'''' $ are both negligible. Thus, we have
	\begin{align*}
	\sqrt{\binom{n}{2}}  \frac{R_{n}^{2}(\mathbf{X}, \mathbf{Y})}{\sqrt{ R_{n}^{2}(\mathbf{X}, \mathbf{X}) R_{n}^{2}(\mathbf{Y}, \mathbf{Y})}} \overset{d}{\rightarrow} N(0,1)
	\end{align*}
	and consequently $T_{R} \overset{d}{\rightarrow} N(0,1) $. Similarly, it can be proved for $R = dCov$.
\end{proof}
\end{document}